\newcommand{\R}{\mathbb{R}}
\newcommand{\C}{\mathbb{C}}
\newcommand{\Z}{\mathbb{Z}}
\newcommand{\E}{\mathbb{E}}
\newcommand{\calF}{\mathcal{F}}
\newcommand{\calR}{\mathcal{R}}
\newcommand{\diam}{\operatorname{diam}}
\newcommand{\vare}{\varepsilon}
\newcommand{\CZO}{\textup{CZO}}
\newcommand{\CZ}{\textup{CZ}}
\numberwithin{equation}{section}
\newcommand{\ud}[0]{\,\mathrm{d}}
\newcommand{\dist}[0]{\operatorname{dist}}
\newcommand{\abs}[1]{|#1|}
\newcommand{\babs}[1]{\big|#1\big|}
\newcommand{\Babs}[1]{\Big|#1\Big|}
\newcommand{\Norm}[2]{\|#1\|_{#2}}
\newcommand{\bNorm}[2]{\big\|#1\big\|_{#2}}
\newcommand{\ave}[1]{\langle #1\rangle}
\newcommand{\bave}[1]{\big\langle #1\big\rangle}
\newcommand{\osc}[0]{\operatorname{osc}}
\newcommand{\BMO}[0]{\operatorname{BMO}}
\newcommand{\supp}[0]{\operatorname{spt}}
\newcommand{\loc}[0]{\operatorname{loc}}
\newcommand{\eps}[0]{\varepsilon}
\newcommand{\ch}[0]{\operatorname{ch}}
\newcommand{\calD}[0]{\mathcal{D}}
\newcommand{\wt}[1]{{\widetilde{#1}}}
\theoremstyle{plain}
\newtheorem{thm}[equation]{Theorem}
\newtheorem{lem}[equation]{Lemma}
\newtheorem{prop}[equation]{Proposition}
\theoremstyle{definition}
\newtheorem{defn}[equation]{Definition}
\theoremstyle{remark}
\newtheorem{rem}[equation]{Remark}
\title{Off-diagonal estimates for bilinear commutators}
\author{Tuomas Oikari}
\address[T.O.]{Department of Mathematics and Statistics, University of Helsinki, P.O.B. 68, FI-00014 University of Helsinki, Finland}
\email{tuomas.v.oikari@helsinki.fi}
\subjclass[2010]{42B20}
\keywords{Calder\'on--Zygmund operators, singular integrals, commutators, multilinear analysis}
\thanks{T. Oikari was supported by the Academy of Finland project No. 306901, by the Finnish Centre of Excellence in Analysis and Dynamics Research project No. 307333, and by the three-year research grant of the University of Helsinki No. 75160010.
}
\begin{document}	
\begin{abstract} We find a minimal notion of non-degeneracy for bilinear singular integral operators $T$ and identify testing conditions on the multiplying function $b$ that characterize the $L^p\times L^q\to L^r,$ $1<p,q<\infty$ and $r>\frac{1}{2},$  boundedness of the bilinear commutator $[b,T]_1(f,g) = bT(f,g) - T(bf,g).$ Our arguments cover almost all arrangements of the integrability exponents $p,q,r,$ with a single open problem presented in the end.
Additionally, the arguments extend to the multilinear setting.
\end{abstract}
\maketitle
\section{Introduction}
The study of commutator estimates have their roots in the work of Nehari \cite{Nehari1957} where the boundedness of the commutator
of the Hilbert transform and a multiplying function $b,$ 
\[
[b,H]f(x) = b(x)Hf(x)-H(bf)(x),\qquad Hf(x) = p.v.\int_{\R}f(x-y)\frac{\ud y}{y},
\]  was characterized by Hankel operators. Later, Coifman and Rochberg and Weiss extended Nehari's result by real analytic methods and showed that
\begin{equation}\label{eq:commutatorbmo}
\|b\|_{\BMO} \lesssim \sum_{j=1}^d\|[b,\calR_j]\|_{L^p(\R^d) \to L^p(\R^d)} \lesssim \|b\|_{\BMO} :=\sup_I \fint_I  \abs{b-\ave{b}_I}, \qquad p \in (1,\infty),
\end{equation}
where the supremum is taken over all cubes $I \subset \R^d$ and $\ave{b}_I = \frac{1}{|I|} \int_I b$. The upper bound in \eqref{eq:commutatorbmo} was proved in \cite{CRW} for general Calderón-Zygmund operators $T$, while with the lower bounds they worked with the Riesz transforms $\calR_j;$ commutator upper bounds are usually valid for all Calderón-Zygmund operators (CZO), while the lower bounds require some non-degeneracy. The lower bound in \eqref{eq:commutatorbmo} was improved separately by both Janson \cite{Janson1978} and Uchiyama \cite{Uchiyama1978} to $\|b\|_{\BMO} \lesssim \Norm{[b,T]}{{L^p(\R^d) \to L^p(\R^d)}}$ under certain non-degeneracy assumptions on the kernel of $T$ which encompass any single Riesz transform (in contrast with \eqref{eq:commutatorbmo} involving all the $d$ Riesz transforms). Janson's proof also gives the following off-diagonal characterization of the boundedness of the commutator
\begin{align}\label{eq:commutatoralpha}
	\Norm{[b,T]}{L^p\to L^q}\sim \Norm{b}{\dot C^{0,\alpha}} \sim \sup_Q\ell(Q)^{-\alpha}\fint_Q\abs{b-\ave{b}_Q}, \qquad \alpha :=d\Big(\frac{1}{p}-\frac{1}{q}\Big),
\end{align}
when $1<p<q<\infty.$
The off-diagonal characterizations in the case $1<q<p<\infty$ turned out to be harder and was only recently solved by the  approximate weak factorization (awf) argument in  Hyt\"onen  \cite{HyCom},
\begin{align}\label{eq:commutatorS}
		\Norm{[b,T]}{L^p\to L^q}\sim \Norm{b}{\dot L^s} := \inf_{c\in\C}\Norm{b-c}{L^s},\quad \frac{1}{q} = \frac{1}{s} + \frac{1}{p},\quad 1<q<p<\infty.
\end{align}

\vspace{0.2cm}
Commutator estimates imply factorization results for Hardy spaces, see \cite{CRW}, they have applications in partial differential equations by compensated compactness, div-curl lemmas, see  \cite{CLMS1993}, and they have been crucial in the recent investigations to the Jacobian problem, see Lindberg \cite{Lindberg2017} and \cite{HyCom}.
\vspace{0.2cm}

The awf argument is strong in that it gives a unified approach to all of the three cases, those on the lines \eqref{eq:commutatorbmo}, \eqref{eq:commutatoralpha}, \eqref{eq:commutatorS}, in that it works for many singular integrals with kernels satisfying only minimum non-degeneracy assumptions, and in that it is flexible enough to grant e.g. multi-parameter and multilinear extensions. 
For the multi-parameter variants of the awf argument see Airta, Hyt\"{o}nen, Li, Martikainen, Oikari \cite{AirHytLiMartOik2020offdiag} and Oikari \cite{Oik2020(1)},
where, respectfully, the commutators
\begin{align}\label{line2}
\big[T_2,[T_1,b]\big],\big[b,T\big]:L^{p_1}(\R^{d_1};L^{p_2}(\R^{d_2}))\to L^{q_1}(\R^{d_1};L^{q_2}(\R^{d_2}))
\end{align}
were treated. On the line \eqref{line2} $1<p_1,p_2,q_1,q_2<\infty,$ $T_i$ is a one-parameter CZO on $\R^{d_i},$ for $i=1,2,$ and $T$ is a bi-parameter CZO on $\R^{d_1+d_2}.$ 
The adaptability of the awf argument to the bi-parameter settings was not effortless and for both commutators on the line \eqref{line2} the characterization of some cases is still open.

In this article we extend the awf argument to the bilinear setting and study the two commutators
$$
[b,T]_1(f,g) = bT(f,g)-T(bf,g),\qquad [b,T]_2(f,g) = bT(f,g)-T(f,bg)
$$  as mappings $L^p\times L^q\to L^r$ for $r>\frac{1}{2}$ and $1<p,q<\infty.$ Our cases separate accordingly to the following three conditions
\begin{align}\label{defn:classes}
\underset{\emph{sub-diagonal}}{\frac{1}{r} < \frac{1}{p}+\frac{1}{q}},\qquad \underset{\emph{diagonal}}{	\frac{1}{r} = \frac{1}{p}+\frac{1}{q}},\qquad  \underset{\emph{super-diagonal}}{\frac{1}{r} > \frac{1}{p}+\frac{1}{q}}.
\end{align}
If $r>1$ then we are in the Banach range of exponents and if $r\leq1$ then we are in the quasi Banach range of exponents.
In Chaffee \cite{ChaffeeLucas2016Cobm} the necessity of $b\in\BMO$ on the diagonal in the Banach range of exponents was shown with kernels expandable locally as a Fourier series. A unified approach to the diagonal and sub-diagonal cases was given in Guo, Lian and Wu \cite{guo2017unified}, which covers the diagonal in the whole quasi Banach range, however on the sub-diagonal they only treat the linear case.

 In addition to involving new cases, the super-diagonal case is new in the bilinear setting, our results generalize previous work: the definition of non-degeneracy is weaker than those supposed in \cite{LiWick2017}, \cite{kuffner2018weak}, \cite{ChaffeeLucas2016Cobm}, \cite{guo2017unified} and \cite{Li2020multilinear}; the awf argument allows us to consider complex valued functions $b,$ whereas \cite{Li2020multilinear} was limited to the real valued case; the full quasi-Banach range is reached in the diagonal and sub-diagonal cases, whereas \cite{ChaffeeLucas2016Cobm} is limited to the Banach range; and in that the awf argument encompasses bilinear CZOs with both  Dini and rough kernels. Lastly, due to us studying the quasi Banach range, the arguments involve additional twists absent from previous research articles. Our full results are recorded as theorems \ref{thm:diag/subdiag/lb},  \ref{thm:diag/ub}, \ref{thm:subdiag/ub} and \ref{thm:superdiag}, the following being a condensed version.
\begin{thm}\label{thm:main} Let $b\in L^1_{\loc}(\R^d;\C),$ let $T$ be a non-degenerate bilinear Calderón-Zygmund operator, let $\frac{1}{2}<r<\infty$ and $1<p,q<\infty$.  Then, for $i=1,2,$ there holds that 
	\begin{align*}
	\Norm{[b,T]_i}{L^p\times L^q\to L^r} \sim	\begin{cases}
		\Norm{b}{\BMO}, & \mbox{if}\quad \frac{1}{r} = \frac{1}{p} + \frac{1}{q} \\
		\Norm{b}{\dot C^{\alpha,0}},\quad \alpha = d\big( (\frac{1}{p}+\frac{1}{q})-\frac{1}{r}\big), &\mbox{if}\quad \frac{1}{r} < \frac{1}{p} + \frac{1}{q} \\
		\Norm{b}{\dot{L}^s},\quad \frac{1}{r} = \frac{1}{s}+\frac{1}{p}+\frac{1}{q},  & \mbox{if}\quad \frac{1}{r} > \frac{1}{p} + \frac{1}{q},\quad r\geq 1.
		\end{cases}
	\end{align*}
\end{thm}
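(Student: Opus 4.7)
The plan is to split each equivalence $\Norm{[b,T]_i}{L^p\times L^q\to L^r}\sim \Norm{b}{X}$ into an upper bound valid for every bilinear Calder\'on--Zygmund operator and a lower bound that exploits the non-degeneracy of $T$. Since the two slots of $T$ play symmetric roles it suffices to treat $i=1$, and the three regimes in \eqref{defn:classes} are handled separately.

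For the upper bounds the starting point is the pointwise identity
\begin{equation*}
[b,T]_1(f,g)(x) = \int\bigl(b(x)-b(y)\bigr)K(x,y,z)f(y)g(z)\ud y\ud z.
\end{equation*}
In the diagonal case I would use the Coifman--Rochberg--Weiss conjugation trick: $[b,T]_1$ is the $z$-derivative at $0$ of $e^{zb}T(e^{-zb}\cdot,\cdot)$, so a Cauchy formula on a small circle combined with the bilinear weighted theory for the Muckenhoupt weight $e^{t\Re b}$ (which lies in $A_\infty$ for small $t$ by John--Nirenberg) extracts the $\BMO$ bound. In the sub-diagonal case I would insert $|b(x)-b(y)|\leq \Norm{b}{\dot C^{0,\alpha}}|x-y|^\alpha$ into the integral, producing a bilinear fractional integral of order $\alpha$ whose $L^p\times L^q\to L^r$ mapping property with the correct exponent $\alpha = d(\tfrac{1}{p}+\tfrac{1}{q}-\tfrac{1}{r})$ is the bilinear Hardy--Littlewood--Sobolev inequality. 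In the super-diagonal case I would write $b(x)-b(y)=(b(x)-c)-(b(y)-c)$ for a well-chosen constant $c$, split the commutator into a pointwise multiplier piece $(b-c)T(f,g)$ and a piece $T((b-c)f,g)$, and apply H\"older's inequality in the new $L^s$ slot together with the unweighted mapping properties of $T$.

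For the lower bounds I would adapt Hyt\"onen's approximate weak factorization (awf) argument to the bilinear setting. Given a cube $Q$, the non-degeneracy of the kernel furnishes two ``ghost'' cubes $\wt Q_2,\wt Q_3$ of comparable size at controlled distance from $Q$, such that $K(x,y,z)$ has fixed sign and size $\sim \ell(Q)^{-2d}$ on $Q\times \wt Q_2\times \wt Q_3$. Testing $[b,T]_1(1_{\wt Q_2},1_{\wt Q_3})$ against an $f_1$ supported in $Q$ produces
\begin{equation*}
\bigl\langle [b,T]_1(1_{\wt Q_2},1_{\wt Q_3}),f_1\bigr\rangle = c_Q\int_Q(b-\ave{b}_Q)f_1 + \mathrm{(awf\ error)},
\end{equation*}
with $c_Q\sim 1$; the error is itself a sum of further commutator pairings on slightly translated cubes which, by iteration, gets absorbed into the main term. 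The choice $f_1 = 1_Q\overline{\sign(b-\ave{b}_Q)}$ combined with H\"older on the test-function side yields the $\BMO$ and $\dot C^{0,\alpha}$ lower bounds after dividing by $|Q|$ and supremizing over $Q$; the choice $f_1 = 1_Q|b-c|^{s-1}\overline{\sign(b-c)}$ together with H\"older yields the $\dot L^s$ lower bound.

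The main obstacle is the quasi-Banach range $r\leq 1$ on the diagonal and sub-diagonal, where the $L^r$--$L^{r'}$ duality used in the pairing step above is unavailable. I would bypass it by running the awf inequality pointwise rather than dually: on the ghost configuration one has $|[b,T]_1(1_{\wt Q_2},1_{\wt Q_3})(x)|\gtrsim |b(x)-\ave{b}_Q|$ on $Q$ up to absorbable errors, and taking $L^r$ norms of both sides followed by H\"older on the finite-measure set $Q$ recovers the mean oscillation bound. On the super-diagonal this pointwise shortcut is not available, since $\Norm{b}{\dot L^s}$ is intrinsically a global object rather than a local average; this is precisely why the statement requires $r\geq 1$ there, and the remaining case $r<1$ is left as the open problem advertised in the abstract.
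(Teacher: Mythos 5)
Your upper bounds follow the paper's route (the diagonal bound is quoted from the literature, the sub-diagonal bound reduces to the Kenig--Stein bilinear fractional integral, the super-diagonal bound is the H\"older splitting), and your lower-bound strategy in the Banach range is the correct bilinear awf scheme. But there are two genuine gaps. The first is your treatment of the quasi-Banach range $r\le 1$. Your ``pointwise shortcut'' ends with ``taking $L^r$ norms of both sides followed by H\"older on the finite-measure set $Q$,'' but H\"older/Jensen goes the wrong way here: for $r<1$ one has $\big(\fint_Q|b-\ave{b}_Q|^r\big)^{1/r}\le \fint_Q|b-\ave{b}_Q|$, so an upper bound on the $L^r$-oscillation does not control the $L^1$-oscillation that defines $\BMO$ and $\dot C^{0,\alpha}$. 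Moreover the awf error you must absorb is (a constant multiple of) the $L^1$-oscillation on a \emph{different} cube, so absorbing it into an $L^r$-oscillation is circular without precisely the reverse H\"older inequality you lack. The paper avoids this entirely by dualizing the quasi-norm: $\Norm{\psi}{L^{r,\infty}}\lesssim C$ iff for every set $F$ there is a major subset $F'$ on which testing recovers $C|F|^{1/r'}$ (Lemma \ref{lem:quasidual}); this forces the awf factorization to be carried out against indicators of major subsets $E_{Q^i}\subset Q^i$ rather than $1_{Q^i}$, which is exactly the extra flexibility built into condition $(iii)$ of Proposition \ref{prop:wf} and into the weak off-support norm $\mathcal{O}^{\infty}_{p,q,r}$. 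Your proposal contains no substitute for this mechanism.

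The second gap is the super-diagonal lower bound. Choosing $f_1=1_Q|b-c|^{s-1}\overline{\sign(b-c)}$ on a single cube and applying H\"older cannot produce the global norm $\Norm{b}{\dot L^s}$: one must dualize $b$ against arbitrary normalized mean-zero $f\in L^{s'}$, and such $f$ (as well as your $|b-c|^{s-1}$) is not bounded, so the awf factorization--which requires $f\in L^\infty$ with controlled sup-norm--does not apply to it directly. The paper's argument decomposes $f$ by a stopping-time/sparse family (Lemma \ref{lem:sparse:family}), factorizes each bounded piece $f_P$ separately with carefully normalized test functions, controls the resulting sums with sparse-domination estimates and a randomization identity encoded in the summed off-support norm $\mathcal{O}^{\Sigma}_{p,q,r}$ (with disjoint major subsets $E_{P^0}$ being essential at the endpoint $r=1$), and only then recovers $\Norm{b}{\dot L^s}$ via the Riesz representation theorem. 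A smaller but real omission: your ``starting point'' pointwise kernel representation of $[b,T]_1(f,g)(x)$ for $x\in\supp f$ is not free; the paper spends Propositions \ref{prop:difference}--\ref{prop:closedform} justifying it from the uniform boundedness of truncations.
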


\subsubsection*{Acknowledgements}
I thank Henri Martikainen for helpful discussions and comments.

\section{Definitions and preliminaries}

\subsection{Basic notation}
We let $\Sigma = \Sigma(\R^d)$ denote the linear span of indicator functions of cubes on $\R^d.$ Similarly we denote $L^1_{\loc}(\R^d) = L^1_{\loc},$ $\int_{\R^d} = \int,$ and so on, mostly leaving out the ambient space if this information is obvious.
We denote averages with
$
\langle f \rangle_A = \fint_A f:= \frac{1}{|A|} \int_A f,
$
where $|A|$ denotes the Lebesgue measure of the set $A$. The indicator function of a set $A$ is denoted by $1_A$.

In this paper we study the  $L^p\times L^q\to L^r$ boundedness and hence it is useful to denote 
$
\sigma(p,q)^{-1} = p^{-1}+q^{-1};
$
then,  H{\"o}lder's inequality writes as $
\Norm{fg}{L^{\sigma(p,q)}} \leq \Norm{f}{L^{p}} \Norm{g}{L^{q}}.$

Lastly, we denote $A \lesssim B$, if $A \leq C B$ for some constant $C>0$ depending only on the dimension of the underlying space, on integration exponents and on other absolute constants appearing in the assumptions that we do not care about.
Then  $A \sim B$, if $A \lesssim B$ and $B \lesssim  A.$ Subscripts on constants ($C_{a,b,c,...}$) and quantifiers ($\lesssim_{a,b,c,...}$) signify their dependence on those subscripts.

\subsection{Bilinear singular integrals}
We denote the diagonal with
\begin{align*}
	\Delta =  \{(x,y,z)\in ( \R^d)^3:  x=y=z\}
\end{align*}
and say that a mapping $K:(\R^d)^3\setminus \Delta \to \C$ is a bilinear Calderón-Zygmund kernel if it satisfies
the size estimate:
	\begin{align}\label{kernel:size}
	\abs{K(x,y,z)} \leq C_K \big(\abs{x-y}+\abs{x-z}\big)^{-2d},
	\end{align}
and the regularity estimate:
	\begin{equation}\label{kernel:regularity}
	\abs{G(x,y,z) - G(x',y,z)} \leq \omega\big(\frac{\abs{x-x'}}{\abs{x-y}+\abs{x-z}}\big) (\abs{x-y}+\abs{x-z})^{-2d}
	\end{equation}
	for $G\in \{K,K^{1*},K^{2*}\},$ whenever $\abs{x-x'} \leq \frac{1}{2}\max(\abs{x-y},\abs{x-z}).$ 	
Here the function $\omega$ is increasing, subadditive, and such that $\omega(0) = 0$ and $\Norm{\omega}{\textup{Dini}} =\int_0^{1}\omega(t)\frac{\ud t}{t} < \infty.$ 
We also assume that the appearing constants $C_K,\Norm{\omega}{\textup{Dini}}$ are the best possible. We denote the collection of all such kernels with $\CZ(2,d,\omega)$ and associated to this class is the norm $\Norm{K}{\CZ(2,d,\omega)} =  C_K + \Norm{\omega}{\textup{Dini}}.$

\begin{defn}\label{defn:SIO:variable} A bilinear operator $T:\Sigma^2\to L^1_{\loc}$ is said to be a (variable kernel) bilinear SIO, if there exists a bilinear kernel $K\in\CZ(2,d,\omega)$ so that
	\[
	\langle T(f_1,f_2),g\rangle = \int\int\int K(x,y,z)  f_1(y)f_2(z) g(x)\ud y\ud z\ud x
	\]
	for all triples $f_1,f_2,g\in\Sigma$ satisfying $\cap_{i=1}^2\supp(f_i)\cap\supp(g) = \emptyset.$
\end{defn}

\begin{defn}\label{defn:SIO:rough} A bilinear operator $T_{\Omega}:\Sigma^2\to L^1_{\loc}$ is said to be a rough bilinear SIO whenever it is well-defined as
	\begin{align*}
	T_{\Omega}(f_1,f_2)(x) = \lim_{\varepsilon\to 0}\int\int_{\max(\abs{x-y},\abs{x-z})>\varepsilon} K_{\Omega}(x,y,z)f_1(y)f_2(z)\ud y\ud z,
	\end{align*}
where 
$$
K_{\Omega}(x,y,z) = \frac{\Omega((x-y,x-z)')}{\abs{(x-y,x-z)}^{2d}},\qquad\Omega(h') = \Omega(h/\abs{h}),\qquad\Omega\in L^1(\mathbb{S}^{2d-1}).
$$
\end{defn}

\subsection{Truncations of bilinear SIOs}\label{sec:trunc}
 We let $K\in \CZ(2,d,\omega)$ and define the truncated operator $T_{\varepsilon}$  as 
 \begin{align}\label{trunc:0}
 	T_{\varepsilon}(f,g)(x) = \iint_{\substack{\max(\abs{x-y},\abs{x-z})>\varepsilon}} K(x,y,z)f(y)g(z)\ud y\ud z,\qquad f,g\in\Sigma.
 \end{align}
 
A particular case of Cotlar's inequality in the bilinear setting states that 
\begin{align}\label{cotlar}
 T^*(f,g) =	\sup_{\varepsilon>0}\abs{T_{\varepsilon}(f,g)} \lesssim \abs{T(f,g)} + MfMg,\qquad K\in \CZ(2,d,\abs{\cdot}^{\delta}),\quad \delta\in(0,1),
\end{align}
where $M$ is the Hardy-Littlewood maximal operator. For \eqref{cotlar}, see e.g. Grafakos, Torres \cite{GT}.
Since $T,M$ are bounded, it follows that $\sup_{\varepsilon>0}\Norm{T_\varepsilon}{L^p\times L^q\to L^{\sigma(p,q)}}< \infty$ for $1<p,q<\infty.$
For rough kernels Cotlar's inequality was not found. However to achieve a uniform bound on the truncations we need less. It was very recently shown in Theorem 1.1. of \cite{grafakos2020l2timescdotstimes} that under the assumptions $\Omega \in L^q(\mathbb{S}^{2d-1}),$ for some $q>\frac{4}{3},$ and $\int_{\mathbb{S}^{2d-1}}\Omega = 0,$ there holds that 
$
\Norm{T^*_{\Omega}}{L^2\times L^2 \to L^1} \lesssim \Norm{\Omega}{L^{q}(\mathbb{S}^{2d-1})}
$ 
and this implies a uniform bound of the desired type.

\subsection{Boundedness assumptions on $T$}
The majority of this paper is devoted to proving commutator lower bounds and there we do not need any boundedness assumptions on the operator $T$ --  only some non-degeneracy assumptions on the kernel $K$ of $T$ and  some very weak regularity conditions, see Section \ref{sec:nondeg} below. For the upper bounds we impose some boundedness on $T$ and this will vary depending whether we are on the sub-diagonal, diagonal or the super-diagonal case.

\begin{defn}\label{defn:CZO:multilinear} A bilinear Calderón-Zygmund operator refers to a bilinear singular integral $T$ that enjoys some boundedness properties, and in this paper this will be one of the following,
	\begin{enumerate}
		\item $\Norm{T}{L^p\times L^q\to L^{\sigma(p,q)}}<\infty$ for a single tuple of exponents $p,q\in(1,\infty),$
		\item $\Norm{T}{L^p\times L^q\to L^{\sigma(p,q)}}<\infty$ for all tuples of exponents $p,q\in(1,\infty),$
		\item $\sup_{\vare>0}\Norm{T_{\varepsilon}}{L^p\times L^q\to L^{\sigma(p,q)}}<\infty$ for a single tuple of exponents $p,q\in(1,\infty).$
	\end{enumerate}
\end{defn}
We will assume $(1)$ for the diagonal upper bound, $(2)$ for the super-diagonal upper bound and $(3)$ for the sub-diagonal upper bound.
\begin{rem}	
	The bilinear Riesz transforms, one of which is
	\begin{align}\label{eq:riesz}
	\calR_i(f,g)(x) = p.v.\int\int \frac{x_i-y_i}{(\abs{x-y}+\abs{x-z})^{2d+1}}f(y)g(z)\ud y\ud z,
	\end{align} satisfy each of the above boundedness properties in Definition \ref{defn:CZO:multilinear}, and consequently, Theorem \ref{thm:main} is valid as stated with $T=\calR_i.$
\end{rem}
\subsection{Bilinear non-degeneracy}\label{sec:nondeg} 
We first recall the definition of non-degeneracy for linear kernels.
\begin{defn} A kernel $K:\R^d\times\R^d\setminus\Delta\to \C$ is said to be non-denegerate, if given $y\in\R^d$ and $r>0$ there exists a point $x$ so that
	\[
	\abs{x-y}>r,\qquad\abs{K(x,y)}\gtrsim r^{-d}.
	\]
\end{defn}
For bilinear SIOs with variable kernel we set the following.
\begin{defn}\label{defn:nondeg:bil1} A kernel $K:(\R^d)^3\setminus\Delta\to\C$ is said to be non-degenerate if both of the following items hold:
\begin{enumerate} 
	\item[$(1)$] for all points $y,$ there exists two points $x,z$ such that
	\begin{align*}
		\max_{a,b\in\{x,y,z\}}\abs{a-b} > r,\qquad	\abs{K(x,y,z)} \gtrsim r^{-2d}.
	\end{align*} 
	\item[$(2)$] for all points $z,$ there exists two points $x,y$ such that
	\begin{align*}
	\max_{a,b\in\{x,y,z\}}\abs{a-b} > r,\qquad	\abs{K(x,y,z)} \gtrsim r^{-2d}.
	\end{align*}
\end{enumerate}
\end{defn}

It is immediate from the size estimate that if  $y,r$ and then $x,z$ are given as in the item $(1)$ of Definition \ref{defn:nondeg:bil1} then $\max_{a,b\in\{x,y,z\}}\abs{a-b} \sim r$. Indeed, to see this, we simply check that 
\begin{align}\label{size}
	r^{-2d} \lesssim \abs{K(x,y,z)} \lesssim (\abs{x-y} + \abs{x-z})^{-2d} \lesssim \max_{a,b\in\{x,y,z\}}\abs{a-b}^{-2d}\lesssim r^{-2d},
\end{align}
which shows the claim. 

\begin{rem} We will use the assumption $(1)$ to prove Theorem \ref{thm:main} for the index $i=1$ and the assumption $(2)$ for the index $i=2.$  It follows that we will run the proofs of all of our results through with the assumption $(1)$ of Definition \ref{defn:nondeg:bil1} and it is clear how to modify them to get the case $i =2.$
\end{rem}

For kernels of rough SIOs we set the following.
\begin{defn}\label{defn:nondeg:bil:rough} A kernel 
	$K_{\Omega}$ is non-degenerate if $\Omega\not= 0,$ i.e. it has at least one non-zero Lebesgue point $\theta = (\theta_1,\theta_2)\in\mathbb{S}^{2d-1}.$
\end{defn}

\begin{rem} The kernel of the bilinear Riesz transform $\calR_i$
	satisfies both items $(1)$ and $(2)$ in Definition \ref{defn:nondeg:bil1} and is also non-degenerate when considered as a rough bilinear SIO as in Definition \ref{defn:nondeg:bil:rough}.
\end{rem}

In \cite{Li2020multilinear} the following definition of non-degeneracy is given; to contrast it with the  non-degeneracy we name it the strong non-degeneracy.
\begin{defn}\label{defn:nondeg:bilstrong} A kernel $K:(\R^d)^3\setminus\Delta\to\C$ is said to be strongly non-degenerate if for each given point $y\in\R^d$ and $r>0$ there exists a point $x\in B(y,r)^c$ such that 
	\[
	\abs{K(x,y,y)} \gtrsim r^{-2d}.
	\]
\end{defn}
It is straightforward that strong non-degeneracy is stronger than non-degeneracy.
\begin{prop}\label{prop:comparison2} Let $K$ be a strongly non-degenerate kernel. Then, the kernel $K$ is  non-degenerate. 
\end{prop}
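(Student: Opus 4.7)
The proof will be essentially immediate from unravelling the two definitions. The plan is to use each instance of strong non-degeneracy directly to produce the triple $(x,y,z)$ required by the two items of Definition \ref{defn:nondeg:bil1}, by collapsing one of the coordinates onto $y$ (respectively onto $z$).

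\textbf{Item (1).} Fix $y \in \R^d$ and $r > 0$. By strong non-degeneracy there is a point $x \in B(y,r)^c$ with $|K(x,y,y)| \gtrsim r^{-2d}$. I would simply set $z := y$. Then the triple $(x,y,z) = (x,y,y)$ lies off the diagonal $\Delta$ (since $x \ne y$, the condition $x=y=z$ fails), so $K(x,y,z)$ is well-defined; the required size lower bound $|K(x,y,z)| \gtrsim r^{-2d}$ is exactly what strong non-degeneracy gave us; and
\[
\max_{a,b\in\{x,y,z\}}|a-b| \;=\; \max\bigl(|x-y|,\,|x-z|,\,|y-z|\bigr)\;=\;|x-y|\;>\;r,
\]
so item $(1)$ of Definition \ref{defn:nondeg:bil1} is verified.

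\textbf{Item (2).} Now fix $z \in \R^d$ and $r > 0$. I would apply strong non-degeneracy with the role of the base point played by $z$ (i.e.\ using Definition \ref{defn:nondeg:bilstrong} with $y$ replaced by $z$), obtaining a point $x \in B(z,r)^c$ with $|K(x,z,z)| \gtrsim r^{-2d}$. Setting $y := z$, the triple $(x,y,z) = (x,z,z)$ is again off $\Delta$, the kernel lower bound is automatic, and $\max_{a,b\in\{x,y,z\}}|a-b| = |x-z| > r$, which verifies item $(2)$.

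Both items are thus established by a single appeal to the stronger hypothesis, so the proposition follows. There is no real obstacle here; the only point requiring any attention is to check that the collapsed triples $(x,y,y)$ and $(x,z,z)$ remain off the diagonal $\Delta = \{x=y=z\}$, which is ensured by the fact that strong non-degeneracy explicitly places $x$ outside $B(y,r)$ (respectively outside $B(z,r)$).
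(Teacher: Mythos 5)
Your proof is correct and is essentially the same as the paper's: both verify item $(1)$ by taking $z=y$ and invoking strong non-degeneracy directly, and the paper simply omits the symmetric verification of item $(2)$ that you write out. Nothing further is needed.
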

\begin{proof} We only show the point $(1)$ from Definition $\ref{defn:nondeg:bil1}.$ Fix a point $y\in \R^d,$ then by strong non-degeneracy there exists a point $x\in B(y,r)^c$ so that $\abs{K(x,y,y)} \gtrsim r^{-2d}.$
	Consequently, it remains to write the previous estimate as $\abs{K(x,y,z)} \gtrsim r^{-2d},$ 
	where $z = y$ and to notice that $x\in B(y,r)^c.$
\end{proof}

\begin{defn} We say that a bilinear SIO $T$ is non-degenerate if its kernel $K$ is non-degenerate. Similarly, a bilinear CZO is non-degenerate, if it is bilinear non-degenerate SIO that satisfies at least one of the properties $(1),(2),(3)$ as in Definition \ref{defn:CZO:multilinear}.
\end{defn}

\section{Bilinear approximate weak factorization}
When proving the commutator lower bounds we do not need the full strength of the kernel assumption \eqref{kernel:regularity} and we will replace this with the following weaker assumption: the function $\omega$ satisfies $\omega(0) = 0$,  is increasing, is subadditive and such that
\begin{equation}\label{kernel:dini}
\abs{G(x,y,z) - G(x',y,z)} \leq \omega\big(\frac{\abs{x-x'}}{\abs{x-y}+\abs{x-z}}\big) (\abs{x-y}+\abs{x-z})^{-2d}
\end{equation}
for $G\in \{K,K^{1*},K^{2*}\},$ whenever $\abs{x-x'} \leq \frac{1}{2}\max(\abs{x-y},\abs{x-z}).$ 
Another strengthening is in that the awf argument only requires us to consider the following off-support information on the kernel $K,$
\begin{align*}
T(f1_{Q^0},g1_{Q^2})(x)=\int_{Q^2}\int_{Q^0}K(x,y,z)f(x)g(z)\ud x \ud z,\qquad y\in Q^1,
\end{align*}
where $Q^0,Q^1,Q^2$ are cubes of the same size such that $\max_{i=0,2}\dist(Q^i,Q^1)\sim\ell(Q^1).$
To press the point, there is no reference whatsoever to the operator $T$ and everything is defined with the kernel $K$ only. 

We move to prove the main technical Propositions \ref{prop:bootstrap} and \ref{prop:wf}.
Recall that we only need the assumption $(1)$ from Definition \ref{defn:nondeg:bil1} to show the lower bounds for the commutator $[b,T]_1.$ 
In the following we always work with three cubes $Q^0,Q^1,Q^2$ and variables are reserved to be used as follows, $x\in Q^0, y\in Q^1, z\in Q^2.$

\begin{defn}
	A dyadic grid on $\R^d$ is a collection $\calD$ of cubes satisfying the following.
	\begin{enumerate}[(i)]
		\item For each $k\in\Z$ the collection $\calD_k = \big\{ Q\in\calD: \ell(Q) = 2^k\big\}$ is a disjoint cover of $\R^d.$
		\item For $Q,P\in\calD,$ we have $Q\cap P\in \big\{Q,P,\emptyset\big\}.$
	\end{enumerate}
\end{defn}

\begin{prop}\label{prop:bootstrap} Let $K$ either
	\begin{enumerate}
		\item be a non-degenerate bilinear kernel that satisfies the estimates \eqref{kernel:size}, \eqref{kernel:dini}, or
		\item be a rough non-degenerate bilinear  kernel. 
	\end{enumerate}
	Let  $Q^1\subset \R^d$ be a cube with centre point $c_{Q^1}$ and let $\calD^0$ and $\calD^2$ be arbitrary dyadic grids. 
	
	Then, there exists a constant $A\geq 3,$ cubes $Q^i\in\calD^i$ and points $c_{Q^i}\in Q^i,$ $i=1,2,$ so that the following items hold.
	\begin{enumerate}[(i)]
		\item 	The cubes are separated and have size as follows
		\begin{align}\label{cube:dist}
		\max_{a\in\{0,2\}}\abs{c_{Q^a}-c_{Q^1}} \sim A\ell(Q^1),\qquad \ell(Q^0)\sim\ell(Q^1)\sim\ell(Q^2). 
		\end{align}
		\item There holds that 
		\begin{align}\label{cube:estimate4}
		\abs{K(c_{Q^0},c_{Q^1},c_{Q^2})} \sim A^{-2d}\abs{Q^1}^{-2}.
		\end{align}
		\item For all $x\in Q^0, y\in Q^1,z\in Q^2,$ There holds that 
		\begin{align}\label{cube:estimate3}
		\int_{Q^0}\int_{Q^2} 	\abs{K(x,y,z) - K(c_{Q^0},c_{Q^1},c_{Q^2})} \ud x \ud z \lesssim \omega(A^{-1})A^{-2d},
		\end{align}
		where $\omega(A^{-1})\to 0$ as $A\to\infty.$
		\item 	 There holds that 
		\begin{align}\label{cube:estimate2}
		\Babs{\int_{Q^0}\int_{Q^2} K(x,y,z)\ud x\ud z}\sim \int_{Q^0}\int_{Q^2}  \abs{K(x,y,z)}\ud x\ud z \sim A^{-2d}.
		\end{align}
	\end{enumerate}

    Moreover, the similar estimates to \eqref{cube:estimate3} and \eqref{cube:estimate2} where we always integrate over any two of the cubes $Q^0,Q^1,Q^2$  with the corresponding variables $x,y,z,$ hold. 
\end{prop}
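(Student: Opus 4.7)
The plan is to build $Q^0$ and $Q^2$ by applying non-degeneracy of $K$ at $y=c_{Q^1}$ with the scale $r=A\ell(Q^1)$, and then to verify (i)--(iv) using the size and regularity of $K$ together with the lower bound granted by non-degeneracy.

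\emph{Variable kernel case.} Fix $A\geq 3$ to be taken large at the end and invoke Definition~\ref{defn:nondeg:bil1}(1) with $y=c_{Q^1}$ and $r=A\ell(Q^1)$, producing points $\tilde x,\tilde z$ with $|K(\tilde x,c_{Q^1},\tilde z)|\gtrsim r^{-2d}$ and separation $\sim r$ by \eqref{size}. Let $Q^0\in\calD^0$ and $Q^2\in\calD^2$ be the dyadic cubes of side length closest to $\ell(Q^1)$ that contain $\tilde x$ and $\tilde z$, and set $c_{Q^0}=\tilde x$, $c_{Q^2}=\tilde z$. This yields \eqref{cube:dist} and \eqref{cube:estimate4} at once. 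For \eqref{cube:estimate3}, observe that for $(x,y,z)\in Q^0\times Q^1\times Q^2$ each of $|x-c_{Q^0}|, |y-c_{Q^1}|, |z-c_{Q^2}|$ is $\lesssim \ell(Q^1)\sim A^{-1}r$, while $|x-y|+|x-z|\sim r$; applying \eqref{kernel:dini} to $K$, $K^{1*}$ and $K^{2*}$ in turn, so as to freeze one argument at a time, then gives the pointwise bound $|K(x,y,z)-K(c_{Q^0},c_{Q^1},c_{Q^2})|\lesssim \omega(A^{-1})r^{-2d}$, which integrates over $Q^0\times Q^2$ (of total volume $\sim \ell(Q^1)^{2d}$) to $\omega(A^{-1})A^{-2d}$. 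Finally \eqref{cube:estimate2} follows by the triangle inequality, since $|K(c_{Q^0},c_{Q^1},c_{Q^2})|\cdot|Q^0||Q^2|\sim A^{-2d}$ dominates the error $\omega(A^{-1})A^{-2d}$ once $A$ is taken large enough that $\omega(A^{-1})\ll 1$.

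\emph{Rough kernel case.} Here \eqref{kernel:dini} is unavailable and one must replace pointwise regularity by Lebesgue density. Let $\theta=(\theta_1,\theta_2)\in\mathbb{S}^{2d-1}$ be a non-zero Lebesgue point of $\Omega$ supplied by Definition~\ref{defn:nondeg:bil:rough}. Position $c_{Q^0},c_{Q^2}$ so that $(c_{Q^0}-c_{Q^1},c_{Q^0}-c_{Q^2})$ has length $A\ell(Q^1)$ and direction $\theta$, then snap to the dyadic grids as in the smooth case. For any $(x,y,z)\in Q^0\times Q^1\times Q^2$ the direction $(x-y,x-z)/|(x-y,x-z)|$ lies in a spherical cap of radius $O(A^{-1})$ around $\theta$, so the Lebesgue density of $\Omega$ at $\theta$ ensures that cap-averages of $\Omega$ converge to $\Omega(\theta)\neq 0$ as $A\to\infty$. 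This produces a quantitative modulus $\omega_A\to 0$ playing the role of $\omega(A^{-1})$ in the smooth case, after which \eqref{cube:estimate2} is deduced exactly as before.

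The ``moreover'' clause is a symmetry statement: whichever pair of cubes one integrates over, the frozen variable yields the same kind of bound, because the regularity and size estimates treat the three arguments of $K$ on the same footing (via $K^{1*}$ and $K^{2*}$) and the cubes $Q^0,Q^1,Q^2$ have comparable sidelengths. The principal difficulty I anticipate is the rough case, where one must extract a usable rate of convergence from the abstract Lebesgue density of $\Omega$ at $\theta$; this is precisely what forces \eqref{cube:estimate3} to be phrased in terms of a generic modulus $\omega(A^{-1})\to 0$ rather than an explicit Dini expression.
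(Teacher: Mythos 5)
Your argument follows the same route as the paper: non-degeneracy applied at $c_{Q^1}$ with scale $r\sim A\ell(Q^1)$ to produce the two auxiliary points, a telescoping of $K(x,y,z)-K(c_{Q^0},c_{Q^1},c_{Q^2})$ into three differences handled by the regularity of $K,K^{1*},K^{2*}$, and absorption of the $\omega(A^{-1})$ error into the main term for (iv). The only cosmetic difference in the variable-kernel case is how the dyadic cubes are produced (you take the dyadic cube of comparable scale containing the chosen point; the paper first builds translated copies of $Q^1$ and then passes to maximal dyadic subcubes); both work, since the designated points $c_{Q^i}$ need not be centres. That part of your proof is complete and correct.

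In the rough case your outline is the right one but two concrete steps are asserted rather than carried out, and they are exactly where the work lies. First, the difference $K_\Omega(x,y,z)-K_\Omega(c_{Q^0},c_{Q^1},c_{Q^2})$ has \emph{two} sources of error: the change in the numerator $\Omega$ and the change in the denominator $\abs{(x-y,x-z)}^{2d}$. The latter is not controlled by the Lebesgue-point property at all; the paper isolates it as a separate term and bounds it pointwise by $\lesssim A^{-1}$ via the mean value theorem applied to $t\mapsto t^{2d}$. Second, for the numerator term the statement ``cap-averages of $\Omega$ converge to $\Omega(\theta)$'' does not directly bound $\int_{Q^0}\int_{Q^2}\abs{\cdots}\ud x\ud z$: one must show that the pushforward of Lebesgue measure on $Q^0\times Q^2$ under $(x,z)\mapsto(x-y,x-z)$ is comparable to the measure used in the Lebesgue-point definition. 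The paper does this by writing $x=c_{B^0}+ru_x$, etc., and performing an explicit change of variables that converts the average over $Q^0\times Q^2$ into $\fint_{B(0,2/A)}\fint_{B(0,2/A)}\abs{\Omega(\theta_0+s,\theta_2+t)-\Omega(\theta_0,\theta_2)}\ud s\ud t$, which tends to $0$ because $\theta$ is a Lebesgue point of the zero-homogeneous extension of $\Omega$. These are routine but necessary computations; as written, your rough-case paragraph names the difficulty without resolving it. The ``moreover'' clause then requires checking that the denominator estimate is pointwise (so it survives any choice of two integration variables) and that the change of variables can be run with any two of $x,y,z$, which is what the paper's estimate for the term $I$ verifies.
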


\begin{rem} We only need to set up $Q^i\in\calD^i,$ $i=0,2,$ for the study of the super-diagonal case $r=1.$
\end{rem}

\begin{proof}[Proof of the case $(1)$:]
As we will mostly manage without the property $Q^i\in\calD^i$, we first find any two cubes $Q^i,$ $i=1,2,$ satisfying the rest of the claims.

	We fix a cube $Q^1\subset\R^d$ and denote its centre point with $c_{Q^1}.$ Let $r= A\diam(Q^1)/2$ and by non-degeneracy find two points $c_{Q^0},c_{Q^2}$ so that $c_{Q^2}\in B(c_{Q^1},r)^c$ (the case $c_{Q^0}\in B(c_{Q^1},r)^c$ is completely symmetric) and
	\begin{align}\label{star1}
			\abs{K(c_{Q^0},c_{Q^1},c_{Q^2})} \overset{*}{\sim} r^{-2d} \sim A^{-2d}\abs{Q^0}^{-2}.
	\end{align} 
	The fact that we have $\sim$ above where indicated by $*$ follows from the discussion after Definition \ref{defn:nondeg:bil1}, see line \eqref{size}. Hence the claim \eqref{cube:estimate4} holds.
Then, we let
$$
Q^0 = (c_{Q^0}-c_{Q^1})+ Q^1,\qquad Q^2 = (c_{Q^2}-c_{Q^1})+ Q^1
$$ 
be the cubes respectfully with the centre points $c_{Q^0}$ and $c_{Q^2}.$  Then, it is clear that the claims on the line \eqref{cube:dist} hold.

Towards the remaining two claims, we first estimate
\begin{align*}
\abs{K(x,y,z) - K(c_{Q^0},c_{Q^1},c_{Q^2})} &\leq \abs{K(x,y,z) - K(c_{Q^0},y,z)} \\
&\qquad+ \abs{K(c_{Q^0},y,z) - K(c_{Q^0},c_{Q^1},z)} \\
&\qquad\qquad+ \abs{K(c_{Q^0},c_{Q^1},z) - K(c_{Q^0},c_{Q^1},c_{Q^2})}.
\end{align*}
Then, as for all points $x\in Q^0, y\in Q^1,z\in Q^2,$  we have
$$\abs{x-c_{Q^0}} \leq \frac{1}{2}\abs{c_{Q^0}-z} \leq \frac{1}{2}\max(\abs{c_{Q^0}-y},\abs{c_{Q^0}-z}),$$ 
(which follows immediately by $c_{Q^2}\in B(x,Ar)^c,$ $z\in Q^2$ and that $A\geq 3$), 
the regularity estimate \eqref{kernel:dini} is applicable and we estimate first of the three intermediate terms as 
\begin{align*}
\abs{K(x,y,z) - K(c_{Q^0},y,z)} &\lesssim \omega\big(\frac{\abs{x-c_{Q^0}}}{\abs{c_{Q^0}-z}+\abs{c_{Q^0}-y}}\big)(\abs{c_{Q^0}-z}+\abs{c_{Q^0}-y})^{-2d} \\
&\lesssim \omega\big(\frac{1/2\diam(Q^0)}{A/3\diam(Q^0)}\big)\big(A\diam(Q^0)\big)^{-2d} \lesssim \omega(A^{-1})A^{-2d}\abs{Q^1}^{-2},
\end{align*}
where in the last estimate we used the sub-additivity of $\omega$.

The remaining two terms estimate similarly and consequently we find that 
\begin{align}\label{hjk}
	\abs{K(x,y,z) - K(c_{Q^0},c_{Q^1},c_{Q^2})}  \lesssim \omega(A^{-1})A^{-2d}\abs{Q^1}^{-2}.
\end{align}
Now, by choosing $A$ large enough, subtracting and adding $K(c_{Q^0},c_{Q^1},c_{Q^2})$ and using \eqref{star1} and \eqref{hjk}
we actually find that 
\begin{align}\label{pert}
	\abs{K(x,y,z)}\sim A^{-2d}\abs{Q^1}^{-2},
\end{align}
which is an improvement of \eqref{cube:estimate4}.
Similarly, by using the estimates \eqref{star1} and \eqref{hjk}, the claims \eqref{cube:estimate3} and \eqref{cube:estimate2} follow immediately.

We still need to argue that we can arrange $Q^i\in \calD^i,$ for $i=0,2.$ Assume that we have shown the claims for the triple of cubes $\wt Q^0, Q^1,\wt Q^2$ with centre points $c_{\wt Q^0},c_{ Q^1},c_{\wt Q^2.}$ Then, we let $Q_d^i\in\calD^i$ be the largest dyadic cube such that $Q_d^i\subset \wt Q^i.$ Now the cubes $Q_d^i$ clearly satisfy the claims on the line \eqref{cube:dist}, and as \eqref{pert} is valid especially with the triple of points $(c_{Q_d^0},c_{Q^1},c_{Q_d^2}),$ we find
\begin{align}\label{true1}
	\abs{K(c_{Q_d^0},c_{Q^1},c_{Q_d^2})}\sim A^{-2d}\abs{Q^1}^{-2} \sim A^{-2d}\abs{Q_d^1}^{-2}
\end{align}
and hence \eqref{cube:estimate4} is checked. The remaining claims are similarly immediate (with $Q_d^i$ in place of $Q^i$) and follow as before.

\end{proof}

\begin{proof}[Proof of the case $(2)$:] We first check the claims with balls in place of cubes. By the non-degeneracy assumption let $\theta=(\theta_0,\theta_2)\in\mathbb{S}^{2d-1}$ be a non-zero Lebesgue point of $\Omega.$ Then, fix a ball $B^1$ with centre $c_{B^1}$ and radius $r.$ Let the points $c_{B^0},c_{B^2}$ be defined by the following identities
	$$	
	c_{B^0} -  c_{B^1}  =  rA\theta_0, \qquad c_{B^0}-c_{B^2} = rA\theta_2,
	$$
	and let $B^i$ be a ball with centre $c_{B^i}$ and radius $r.$ 
	It is then clear that \eqref{cube:dist} holds and that 
	\begin{align*}
			K_{\Omega}(c_{B^0},c_{B^1},c_{B^2}) &=  \frac{\Omega(c_{B^0}-c_{B^1},c_{B^0}-
			c_{B^2})}{\abs{(c_{B^0}-c_{B^1},c_{B^0}-c_{B^2})}^{2d}}  \\ 
		&=\frac{\Omega((rA\theta_0,rA\theta_2)')}{\abs{(rA\theta_0,rA\theta_2)}^{2d}} \sim A^{-2d}\abs{B^1}^{-2d}\abs{\Omega(\theta_0,\theta_1)},
	\end{align*}
	hence \eqref{cube:estimate4} holds. 
	
	It remains to check \eqref{cube:estimate3} and \eqref{cube:estimate2}. Let $x\in B^0,y\in B^1,z\in B^2$ be arbitrary and write
	\[
	x = c_{B^0} + ru_x,\quad y = c_{B^0} - rA\theta_0 + ru_y,\quad z =  c_{B^0}-rA\theta_2 + u_z,
	\]
	for a specific $u_a\in B(0,1)$ depending on the parameter $a\in\{x,y,z\}.$ To ease notation we write $\Omega(h') = \Omega(h)$ and $K_{\Omega} = K.$ Then, we have
	\begin{align*}
		&K(x,y,z)-K(c_{B^0},c_{B^1},c_{B^2}) = \frac{\Omega(x-y,x-
			z)}{\abs{(x-y,x-z)}^{2d}} - \frac{\Omega(c_{B^0}-c_{B^1},c_{B^0}-
			c_{B^2})}{\abs{(c_{B^0}-c_{B^1},c_{B^0}-c_{B^2})}^{2d}} \\
		&= \frac{\Omega\big(rA\theta_0 + r(u_x-u_y),rA\theta_2 + r(u_x-u_z)\big)}{\babs{\big(rA\theta_0 + r(u_x-u_y),rA\theta_2 + r(u_x-u_z)\big)}^{2d}}
		- \frac{\Omega\big(  rA\theta_0,Ar\theta_2\big)}{\babs{rA\theta_0,rA\theta_2}^{2d}} \\
		&= \big(rA\big)^{-2d}\Big(  \Omega\big(  \theta_0 + \frac{u_x-u_y}{A},\theta_2 + \frac{u_x-u_z}{A}  \big)\Babs{\big(  \theta_0 + \frac{u_x-u_y}{A},\theta_2 + \frac{u_x-u_z}{A}  \big)}^{-2d}-\Omega(\theta_0,\theta_2)\Big) \\
		&=  \big(rA\big)^{-2d}\big( I+ II\big),
			\end{align*}
where 
\begin{align*}
	I =  \Big(\Omega\big(  \theta_0 + \frac{u_x-u_y}{A},\theta_2 + \frac{u_x-u_z}{A}  \big)-\Omega(\theta_0,\theta_2) \Big)\Babs{\big(  \theta_0 + \frac{u_x-u_y}{A},\theta_2 + \frac{u_x-u_z}{A}  \big)}^{-2d}
\end{align*}
and 
\begin{align*}
	II = \Omega(\theta_0,\theta_2)\Big(	  \babs{\big(  \theta_0 + \frac{u_x-u_y}{A},\theta_2 + \frac{u_x-u_z}{A}  \big)}^{-2d}-1 \Big).
\end{align*}
With a choice of $A$ large enough we find that
\begin{align*}
	\abs{II} &\leq \abs{\Omega(\theta_0,\theta_2)}\babs{1-\babs{\big(  \theta_0 + \frac{u_x-u_y}{A},\theta_2 + \frac{u_x-u_z}{A}  \big)}^{2d}} \\ 
	&\lesssim_{\Omega(\theta_0,\theta_2)} \babs{\abs{(\theta_0,\theta_2)}^{2d}-\babs{\big(  \theta_0 + \frac{u_x-u_y}{A},\theta_2 + \frac{u_x-u_z}{A}  \big)}^{2d}} \\
	&\overset{*}{\lesssim }\babs{\abs{(\theta_0,\theta_2)}-\babs{\big(  \theta_0 + \frac{u_x-u_y}{A},\theta_2 + \frac{u_x-u_z}{A}  \big)}} \leq \babs{\big( \frac{u_x-u_y}{A},\frac{u_x-u_z}{A}  \big)} \lesssim A^{-1},
\end{align*}
where as indicated by $*$ the mean valued theorem was applied with $x\mapsto x^{2d}$ and we used the estimate $\abs{u_x-u_y}+\abs{u_x-u_z} \lesssim 1.$ Hence, we find that 
\begin{align}\label{hjkl}
	\int_{B^0}\int_{B^2}\left(rA\right)^{-2d}\abs{II}\ud x \ud z \lesssim \omega_{II}(A^{-1})A^{-2d},\qquad \omega_{II}(A^{-1})= A^{-1}.
\end{align}
With a fixed $y,$ the point $u_x-u_y$ varies over $B(0,2)$ and with a fixed $y,x$ the point $u_x-u_z$ varies over $B(0,2).$ Hence, we estimate
\begin{equation}\label{anytwo}
	\begin{split}
	\int_{B^0}\int_{B^2}\left(Ar\right)^{-2d}\abs{I}\ud x \ud z &\lesssim A^{-2d}\fint_{B^0}\fint_{B^2} \babs{\Omega\big(  \theta_0 + \frac{u_x-u_y}{A},\theta_2 + \frac{u_x-u_z}{A}  \big)-\Omega(\theta_0,\theta_2) } \ud x \ud z  \\
	&\leq  A^{-2d}\fint_{B^0}\fint_{B(0,2)} \babs{\Omega\big(  \theta_0 + \frac{u_x-u_y}{A},\theta_2 + \frac{t}{A} \big)-\Omega(\theta_0,\theta_2) } \ud x \ud z  \\
	&\leq A^{-2d}\fint_{B(0,2)}\fint_{B(0,2)}\babs{\Omega\big(  \theta_0 + \frac{s}{A},\theta_2 + \frac{t}{A}  \big)-\Omega(\theta_0,\theta_2) } \ud s\ud t \\
	&=A^{-2d}\fint_{B(0,\frac{2}{A})}\fint_{B(0,\frac{2}{A})}\babs{\Omega\big(  \theta_0 + s,\theta_2 + t  \big)-\Omega(\theta_0,\theta_2) } \ud s\ud t \\ 
	&= A^{-2d}\omega_I(A^{-1}),
	\end{split}
\end{equation}
where 
\[
\omega_I(A^{-1}) = \fint_{B(0,\frac{2}{A})}\fint_{B(0,\frac{2}{A})}\babs{\Omega\big(  \theta_0 + s,\theta_2 + t  \big)-\Omega(\theta_0,\theta_2) } \ud xs\ud t \to 0,\qquad A\to\infty,
\]
by $\theta = (\theta_0,\theta_2)$ being a Lebesgue point of $\Omega.$
Having the preceding estimate together with \eqref{hjkl} shows \eqref{cube:estimate3}, 
	\begin{align*}
&\int_{Q^0}\int_{Q^2} 	\abs{K(x,y,z) - K(c_{B^0},c_{B^1},c_{B^2})} \ud y \ud z \lesssim 	\int_{B^0}\int_{B^2}\left(Ar\right)^{-2d}\abs{I + II}\ud x \ud z  \\ 
 &\lesssim \big( \omega_I(A^{-1})+ \omega_{II}(A^{-1}) \big)A^{-2d} =  \omega(A^{-1})A^{-2d}.
\end{align*}
As before, \eqref{cube:estimate2} follows from \eqref{cube:estimate4} and \eqref{cube:estimate3}.

Lastly, we replace the balls with the desired cubes. Let $Q^1$ be a cube with centre point $c_{Q^1}=c_{B^1}$ such that $Q^1\subset B^1$ for a minimal ball $B^1$ with centre point $c_{B^1},$ and let $Q_d^i$ be the maximal dyadic cubes in $\calD^i$ such that $c_{B^i}\in Q_d^i\subset B^i,$ for $i=1,2.$ We define $c_{Q_d^i}=c_{B^i}$ (these are not necessarily the centre-points).
It is then clear from the setup that the triple of cubes $Q_d^0,Q^1,Q_d^2$ and the points $(c_{Q_d^0},c_{Q^1},c_{Q_d^2})$ satisfy the claims \eqref{cube:dist} and \eqref{cube:estimate4}. Of the remaining claims, the claim \eqref{cube:estimate3} follows, for example, by using the just shown result for balls,
\begin{align*}
&\int_{Q_d^0}\int_{Q_d^2} 	\abs{K(x,y,z) - K(c_{Q_d^0},c_{Q^1},c_{Q_d^2})} \ud x \ud z \\
&\leq \int_{B^0}\int_{B^2} 	\abs{K(x,y,z) - K(c_{B^0},c_{B^0},c_{B^2})} \ud x \ud z \lesssim  \omega(A^{-1})A^{-2d},
\end{align*}
and \eqref{cube:estimate3} together with \eqref{cube:estimate4} implies \eqref{cube:estimate2}. The last claim (we can integrate over any two of the cubes) follows by noting that the estimate for the term $II$ was point-wise and inspecting the estimate \eqref{anytwo} for the term $I.$ 
\end{proof}

From now on whenever we fix a cube $Q^1,$ the associated cubes $Q^0$ and $Q^2$ will stand for the cubes generated through Proposition \ref{prop:bootstrap}.
If a function has support in the cube $Q^i$ then it has the subscript $i$ or $Q^i,$ e.g. if $\supp(g)\subset Q^i,$ then we write $g =g_i = g_{Q^i}.$

\begin{prop}\label{prop:wf} Suppose that $K$ is a non-degenerate bilinear kernel. Then, there exists a large parameter $A$ so that supposing the following items:
	\begin{enumerate}[(i)]
		\item let $Q^1$ be a cube and let $Q^0,Q^2$ stand for the cubes generated by Proposition \ref{prop:bootstrap} above,
		\item let $f$ be a locally bounded function with zero mean supported on the cube $Q^1,$
		\item let $g_i$ be functions such that $\supp(g_i)\subset Q^i$ and 
				$\ave{\abs{g_i}}_{Q^i}\sim\Norm{g_i}{\infty} \gtrsim 1,$
	\end{enumerate}
hold, the function $f$ can be written as 
	\begin{align}\label{decompose:f}
	f = \big[ h_1T^{1*}(g_0,g_2) - g_0 T(h_1,g_2)\big] + \big[ h_0T(g_1,g_2) - g_1T^{1*}(h_0,g_2)\big]  + \wt{f} 
	\end{align}
	and we have the following size and support localization information
	\begin{align}\label{wf:bounds}
	\abs{h_1} \lesssim A^{2d}\abs{f},\qquad \abs{h_0} \lesssim	A^{2d} 	\omega(A^{-1}) \Norm{f}{\infty}\abs{g_0},\qquad 	\abs{\widetilde{f}} \lesssim  \omega(A^{-1})\Norm{f}{\infty} \abs{g_1},
	\end{align}
	where the implicit constants on the line \eqref{wf:bounds} depend only on the implicit constants present in the point $(iii)$ and are otherwise independent of the functions $g_i,$ $i=0,1,2.$
	Moreover, there holds that $\int_{Q^1}\wt{f} = 0.$
\end{prop}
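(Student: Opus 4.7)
The plan is to construct $h_1,h_0$ so that the first bracket of \eqref{decompose:f} reproduces $f$ on $Q^1$ up to an $\omega(A^{-1})$-small remainder, while the second bracket is engineered \emph{solely} to annihilate the off-support mass the first bracket spills onto $Q^0$. Everything left over then lives on $Q^1$ and becomes $\wt f$. Throughout, $Q^0,Q^2$ and the points $c_{Q^i}$ are those produced by Proposition \ref{prop:bootstrap}, and $A$ is chosen large enough that the $\omega(A^{-1})$ kernel errors from \eqref{cube:estimate3} are dominated by the main $A^{-2d}$ mass from \eqref{cube:estimate2}--\eqref{cube:estimate4}.

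First I would set $\lambda_1^{\ast}:=T^{1*}(g_0,g_2)(c_{Q^1})$ and define
\[
h_1(y):=\frac{f(y)}{\lambda_1^{\ast}}\mathbf{1}_{Q^1}(y),\qquad h_0(x):=\frac{g_0(x)\,T(h_1,g_2)(x)}{T(g_1,g_2)(x)}\mathbf{1}_{Q^0}(x).
\]
The first task is the lower bound $|\lambda_1^{\ast}|\sim A^{-2d}\Norm{g_0}{\infty}\Norm{g_2}{\infty}$, obtained by writing
\[
\lambda_1^{\ast}=K(c_{Q^0},c_{Q^1},c_{Q^2})\int g_0\int g_2+\iint\bigl[K(x,c_{Q^1},z)-K(c_{Q^0},c_{Q^1},c_{Q^2})\bigr]g_0(x)g_2(z)\,dx\,dz,
\]
lower-bounding the main term by \eqref{cube:estimate4} combined with hypothesis (iii), and absorbing the error through \eqref{cube:estimate3}; the analogous lower bound for $T(g_1,g_2)$ on $Q^0$ is obtained identically. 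Granted this, $|h_1|\lesssim A^{2d}|f|$ is immediate, and the definition of $h_0$ is engineered precisely so that the $Q^0$-supported halves $-g_0T(h_1,g_2)$ and $h_0T(g_1,g_2)$ of the two brackets cancel pointwise, thus forcing $\wt f$ to be supported inside $Q^1$.

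For the bound on $|h_0|$, write $T(h_1,g_2)(x)=(\lambda_1^{\ast})^{-1}\iint K(x,y,z)f(y)g_2(z)\,dy\,dz$ on $Q^0$, use $\int_{Q^1}f=0$ to legitimately replace $K(x,y,z)$ by $K(x,y,z)-K(c_{Q^0},c_{Q^1},c_{Q^2})$, and then invoke the $y,z$-integration version of \eqref{cube:estimate3} furnished by the ``moreover'' clause of Proposition \ref{prop:bootstrap}; this produces $|T(h_1,g_2)|\lesssim\omega(A^{-1})\Norm{f}{\infty}/\Norm{g_0}{\infty}$, whence $|h_0|\lesssim A^{2d}\omega(A^{-1})\Norm{f}{\infty}|g_0|$. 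On $Q^1$ there remains
\[
\wt f(y)=-h_1(y)\bigl[T^{1*}(g_0,g_2)(y)-\lambda_1^{\ast}\bigr]+g_1(y)\,T^{1*}(h_0,g_2)(y);
\]
the first summand is $O(\omega(A^{-1})|f|)$ by applying \eqref{cube:estimate3} at $y$ and at $y=c_{Q^1}$ and subtracting, and the second is $O(\omega(A^{-1})\Norm{f}{\infty}|g_1|)$ by inserting $\Norm{h_0}{\infty}\lesssim A^{2d}\omega(A^{-1})\Norm{f}{\infty}\Norm{g_0}{\infty}$ into the trivial size bound $|T^{1*}(h_0,g_2)|\lesssim A^{-2d}\Norm{h_0}{\infty}\Norm{g_2}{\infty}$ coming from \eqref{cube:estimate2}. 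Hypothesis (iii), through which $|g_1|$ is essentially equal to $\Norm{g_1}{\infty}\gtrsim 1$ on $Q^1$, merges both summands into $|\wt f|\lesssim\omega(A^{-1})\Norm{f}{\infty}|g_1|$. The mean-zero property $\int\wt f=0$ is then automatic: integrating \eqref{decompose:f} over $\R^d$ and using the duality $\int h\,T^{1*}(g,g_2)=\int g\,T(h,g_2)$ shows that both brackets integrate to zero, so $\int\wt f=\int f=0$.

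The genuine obstacle is the lower bound $|\lambda_1^{\ast}|,\,|T(g_1,g_2)(c_{Q^0})|\gtrsim A^{-2d}$: Proposition \ref{prop:bootstrap} says only that $K$ is of essentially constant sign on $Q^0\times Q^1\times Q^2$, and one must still make sure that averaging $K$ against $g_0g_2$ (resp.\ $g_1g_2$) does not produce destructive cancellations. This is exactly where hypothesis (iii) is indispensable, through $|g_i|\sim\Norm{g_i}{\infty}\gtrsim 1$, which keeps the main mass $K(c_{Q^0},c_{Q^1},c_{Q^2})\int g_0\int g_2$ intact and so anchors the entire construction.
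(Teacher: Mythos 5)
Your construction is a genuinely different, non-iterative route: you normalize $h_1$ by the constant $\lambda_1^{*}=T^{1*}(g_0,g_2)(c_{Q^1})$ rather than by the full function, and you choose $h_0$ so that the two brackets cancel exactly on $Q^0$. Most of this goes through: the lower bounds on $\lambda_1^{*}$ and on $T(g_1,g_2)$ over $Q^0$, the bound on $h_0$ via the zero mean of $f$ and the ``moreover'' clause of Proposition \ref{prop:bootstrap}, and the mean-zero property of $\wt f$ via the adjoint identity are all correct and match the paper's estimates. The paper instead sets $h_1=f/T^{1*}(g_0,g_2)$ (dividing by the function, not its value at $c_{Q^1}$), so the first bracket reproduces $f$ exactly on $Q^1$ and the entire error is pushed onto $Q^0$ as $\wt w=g_0T(h_1,g_2)$; it then iterates the same factorization once more on $\wt w$, which yields $\wt f=g_1T^{1*}(h_0,g_2)$.

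The gap is in your final bound on $\wt f$. Your remainder contains the term $-h_1(y)\big[T^{1*}(g_0,g_2)(y)-\lambda_1^{*}\big]$, which is supported on $\supp(f)$ and of size $\omega(A^{-1})\abs{f(y)}$, with no factor of $g_1$. You claim hypothesis (iii) makes $\abs{g_1}$ ``essentially equal to $\Norm{g_1}{\infty}$ on $Q^1$,'' but (iii) is an averaged statement, $\ave{\abs{g_1}}_{Q^1}\sim\Norm{g_1}{\infty}$, not a pointwise one: in the intended application (Proposition \ref{prop:osc1:bound}) one takes $g_1=1_{E_{Q^1}}$ for a proper $\gamma$-major subset, so $g_1$ vanishes on a positive-measure portion of $Q^1$ where $f$ need not, and there the claimed domination $\abs{\wt f}\lesssim\omega(A^{-1})\Norm{f}{\infty}\abs{g_1}$ fails. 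The paper's choice of $h_1$ avoids this because its $\wt f=g_1T^{1*}(h_0,g_2)$ carries the factor $g_1$ by construction. Your argument can be repaired either by adopting that normalization, or by weakening the conclusion to $\abs{\wt f}\lesssim\omega(A^{-1})\Norm{f}{\infty}1_{Q^1}$ --- which in fact suffices for every use of the proposition in the paper --- but as a proof of the statement as written it has a hole.
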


\begin{rem}
	If we were dealing only with the integrability exponents $p,q,r\in (1,\infty)$ then we could choose the appearing functions $g_i$ simply as $1_{Q^i},$ however, due to the fact that we allow $r\in(0,1],$ quite arbitrary functions $g_i$ have to be allowed, see the point $(iii)$ in the statement.
\end{rem}

\begin{proof}
	We  write out the function $f$ as
	\begin{align*}
	f = h_1T^{1*}(g_0,g_2) - g_0 T(h_1,g_2) + \wt{w},\qquad  h_1 =  \frac{f}{T^{1*}(g_0,g_2)},\qquad\wt{w} =   g_0 T(h_1,g_2).
	\end{align*}
	We first check that the function $h_1$ is well-defined. We denote with $K_Q$ the constant $K(c_{Q^0},c_{Q^1},c_{Q^2}),$ where $c_{Q^i}\in Q^i$ are the points as in Proposition \ref{prop:bootstrap}. Let $y\in\supp(f)\subset Q^1$ and split into two parts,
	\begin{align*}
	T^{1*}(g_0,g_2)(y) &= \Big(T^{1*}(g_0,g_2)(y) - K_Q\int_{Q^2}\int_{Q^0}g_0(x)g_2(z)\ud x\ud z  \Big) \\
	&\qquad+  K_Q\int_{Q^2}\int_{Q^0}g_0(x)g_2(z)\ud x\ud z \\ 
	&= I(y) + II.
	\end{align*}
	By the lines \eqref{cube:estimate4} and \eqref{cube:estimate3}, respectfully, of Proposition \ref{prop:bootstrap} we find
\begin{equation}\label{s1}
	\begin{split}
		\abs{I(y)} &\lesssim \int_{Q^2}\int_{Q^0} \babs{K(x,y,z)-K(c_{Q^0},c_{Q^1},c_{Q^2})}\abs{g_0(x)}\abs{g_2(x)}\ud x\ud z \\
	&\lesssim \omega(A^{-1})A^{-2d}\Norm{g_0}{\infty}\Norm{g_2}{\infty}
	\end{split}
\end{equation}
	and 
	\begin{align}\label{s2}
	\abs{II} = \abs{K_Q}\abs{Q^1}^2\abs{\ave{g_0}_{Q^0}}\abs{\ave{g_2}_{Q^2}} \sim A^{-2d}\Norm{g_0}{\infty}\Norm{g_2}{\infty}.
	\end{align}
	Consequently, after choosing $A$ sufficiently large, the estimates \eqref{s1} and \eqref{s2} imply that
	\begin{align}\label{s3}
			\abs{	T^{1*}(g_0,g_2)(y)} \sim A^{-2d}\Norm{g_0}{\infty}\Norm{g_2}{\infty}\gtrsim A^{-2d},
	\end{align} and hence, that the function $h_1$ is well-defined. Then, from \eqref{s3} the first claim on the line \eqref{wf:bounds} is also clear.  Next, we control the term $\wt{w}.$ 
	We expand
	\begin{align*}
	T(h_1,g_2)	= T\Big(h_1 - \frac{f}{K_Q\iint g_0g_2},g_2\Big) +   T\Big(\frac{f}{K_Q\iint g_0g_2},g_2\Big).
	\end{align*}
	We estimate the left term on the right-hand side first and for this fix a point $y\in \supp(h_1 - \frac{f}{K_Q\iint g_0g_2}) = \supp(f) \subset Q^1.$ By the lines \eqref{cube:estimate3} and \eqref{cube:estimate2} of Proposition \ref{prop:bootstrap} we have
	\begin{align*}
	\Babs{h_1(y) - \frac{f(y)}{K_Q\iint g_0g_2}}&= \Babs{f(y)\int_{Q^0}\int_{Q^2}\big(K_{Q}- K(x,y,z)\big)g_0(x)g_2(z)\ud x\ud z} \\
	&\qquad\qquad\qquad\times \Babs{T^{1*}(g_0,g_2)(y)K_Q\iint g_0g_2}^{-1}\\
	&\lesssim \abs{f(y)} \Norm{g_0}{\infty}\Norm{g_2}{\infty} \omega(A^{-1}) A^{-2d} \\ 
	&\qquad\qquad\qquad\times \Big(A^{-2d}\ave{\abs{g_0}}_{Q^0} \ave{\abs{g_2}}_{Q^2} \cdot A^{-2d} \abs{\ave{g_0}_{Q^0}} \abs{\ave{g_2}_{Q^2}} \Big)^{-1} \\
	&\lesssim \omega(A^{-1})A^{2d}\abs{f(y)},
	\end{align*}
	where we used the assumption $(iii).$ 
	Consequently, for $x \in \supp(g_0) \subset Q^0$ there holds that
	\begin{align*}
	\Babs{ T\big(h_1 - \frac{f}{K_Q\iint g_0g_2},g_2\big)(x)} &\leq \int_{Q^1}\int_{Q^2}\abs{K(x,y,z)}\Babs{h_1(y) - \frac{f(y)}{K_Q\iint g_0g_2}}\abs{g_2(z)}\ud y \ud z \\
	&\lesssim \omega(A^{-1})A^{2d}\int_{Q^1}\int_{Q^2}\abs{K(x,y,z)}\abs{f(y)}\abs{g_2(z)}\ud y \ud z  \\
	&\lesssim 	\omega(A^{-1})  \Norm{f}{\infty}.
	\end{align*}
	For the remaining term, we use $\int_{Q^1}f = 0$ to estimate
	\begin{align*}
	\Babs{ T\big(\frac{f}{K_Q\iint g_0g_2},g_2\big)(x)} &= \Babs{ \int_{Q^2}\int_{Q^1}\big(K(x,y,z)-K_Q\big)f(y)g_2(z)\ud y \ud z} \times \Babs{K_Q\iint g_0g_2}^{-1} \\
	&\lesssim   \int_{Q^2}\int_{Q^1}\babs{ \big(K(x,y,z)-K_Q\big)f(y)g_2(z)}\ud y \ud z \\ 
	&\qquad\times \big(\abs{\ave{g_0}_{Q^0}}\abs{\ave{g_2}_{Q^2}}A^{-2d}\big)^{-1} \\
	&\lesssim 	\omega(A^{-1})A^{-2d} \Norm{f}{\infty}\Norm{g_2}{\infty}  \times (\abs{\ave{g_0}_{Q^0}}\abs{\ave{g_2}_{Q^2}}A^{-2d})^{-1} \\
	&\lesssim 	\omega(A^{-1})  \Norm{f}{\infty}.
	\end{align*}
	By having the above two estimates together it follows that 
	\begin{align*}
	\abs{\wt{w}(x)} \lesssim		\omega(A^{-1}) \Norm{f}{\infty} \abs{g_0(x)}.
	\end{align*}
	There holds that
	\begin{align}\label{mean0}
			\int\wt{w} = \int g_0 T\Big(\frac{f}{T^{1*}(g_0,g_2)} ,g_2\Big) = \int \frac{f}{T^{1*}(g_0,g_2)} T^{1*}( g_0 ,g_2) = \int f = 0.
	\end{align}
	All the properties of the function $f$ on the cube $Q^1$ that allowed us to run through the first iteration of the decomposition, are enjoyed by the function $\wt{\omega}$ on the cube $Q^0.$ Also, for the kernel $K^{1*}$ of $T^{1*}$ we have $
	\abs{K^{1*}(c_{Q^1},c_{Q^0},c_{Q^2})} = \abs{K(c_{Q^0},c_{Q^1},c_{Q^2})}\gtrsim r^{-2d}.$ Exchanging the roles of $T$ and $T^{1*}$ we iterate the above once more and we write out the function $\wt{\omega}$ as 
	$$
	\wt{w} = h_0T(g_1,g_2) - g_1T^{1*}(h_0,g_2) + \wt{f},\qquad h_0 = \frac{\wt{w}}{T(g_1,g_2)},\qquad \wt{f} = g_1T^{1*}(h_0,g_2).
	$$
	Repeating the above arguments, we find that
	\begin{align*}
	\abs{h_0} \lesssim A^{2d} \abs{\wt{w}} \lesssim	A^{2d} 	\omega(A^{-1}) \Norm{f}{\infty}\abs{g_0}
	\end{align*}
	and
	\begin{align*}
			\abs{\wt{f}} \lesssim \omega(A^{-1}) \Norm{\wt{w}}{\infty}\abs{g_1} \lesssim\omega(A^{-1})^2\Norm{f}{\infty}\abs{g_1}.
	\end{align*}
	Consequently, we have checked the remaining claims on the line \eqref{wf:bounds} and it remains to check  that $\int\wt{f}= 0,$ however, this follows by using the adjoints similarly as it did for the function $\wt{w}$ on the line \eqref{mean0}.
\end{proof}

In the remaining propositions of this section we relate the oscillation to commutator norms. Recall, that
the oscillation of a function $b\in L^1_{\loc}$ over a cube $Q$ is 
	$$\osc(b;Q) = \fint_Q\abs{b-\ave{b}_Q}.$$
Also, let $\gamma \in (0,1).$ Then, a subset $F'\subset F$ is said to be a $\gamma$-major subset, if $\abs{F'}>\gamma\abs{F}.$

\begin{prop}\label{prop:osc1:bound}  Suppose that $K$ is a bilinear non-degenerate kernel, $b\in L^1_{\loc}$ and $\gamma\in(0,1).$ Fix a cube $Q^1$ and let $g_i = 1_{E_{Q^i}},$ for $i=0,1,2,$ where $E_{Q^i}\subset Q^i$ is a $\gamma$-major subset. 
	Then, there holds that
	 	\begin{align}\label{osc1:bound}
	 \abs{Q^1}\osc(b;Q^1) \lesssim \abs{\big\langle [b,T]_1(h_1,g_2), g_0 \big\rangle}  + \abs{\big\langle [b,T]_1(g_1,g_2), h_0 \big\rangle},
	 \end{align}
	 and we have the following size and support localization information,
	\begin{align}\label{info:size1}
	\abs{h_1} \lesssim 1_{Q^1},\qquad \abs{h_0} \lesssim \omega(A^{-1})\abs{g_0},
	\end{align}
	where the implicit constants depend only on $\gamma.$
\end{prop}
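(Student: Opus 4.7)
The plan is to reduce the statement to Proposition~\ref{prop:wf} by producing a mean-zero test function $f$ on $Q^1$ whose pairing with $b$ reproduces $\abs{Q^1}\osc(b;Q^1)$. Concretely, I would let $s = \sign(b-\ave{b}_{Q^1})1_{Q^1}$ and centre it via $f = s - \ave{s}_{Q^1} 1_{Q^1}$. A direct computation gives $\supp f \subset Q^1$, $\int f = 0$, $\Norm{f}{\infty} \leq 2$, and
\[
\int bf = \int_{Q^1}(b-\ave{b}_{Q^1})s = \abs{Q^1}\osc(b;Q^1).
\]
The hypotheses of Proposition~\ref{prop:wf} on $g_i = 1_{E_{Q^i}}$ hold trivially since $\abs{E_{Q^i}}>\gamma\abs{Q^i}$ forces $\Norm{g_i}{\infty} = 1$ and $\ave{\abs{g_i}}_{Q^i} \geq \gamma$, so all implicit constants produced by the application depend only on $\gamma$.

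Next, I would apply Proposition~\ref{prop:wf} to this $f$ and these $g_i$ to obtain the decomposition
\[
f = \bigl[h_1 T^{1*}(g_0,g_2) - g_0 T(h_1, g_2)\bigr] + \bigl[h_0 T(g_1,g_2) - g_1 T^{1*}(h_0, g_2)\bigr] + \wt{f},
\]
together with the pointwise bounds \eqref{wf:bounds} and the key property $\int \wt{f}=0$. Pairing this identity against $b$ and recognising each bracket via the adjoint identity $\langle [b,T]_1(f_1,f_2),f_3\rangle = \int b\bigl(f_3 T(f_1,f_2) - f_1 T^{1*}(f_3,f_2)\bigr)$, the two brackets collapse to $-\langle[b,T]_1(h_1,g_2),g_0\rangle$ and $\langle[b,T]_1(g_1,g_2),h_0\rangle$ respectively, leaving a remainder $\int b\wt{f}$. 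The size estimates $\abs{h_1}\lesssim A^{2d}\abs{f}\lesssim 1_{Q^1}$ and $\abs{h_0}\lesssim A^{2d}\omega(A^{-1})\abs{g_0}$ from \eqref{wf:bounds} specialise to \eqref{info:size1} once the (now fixed) constant $A^{2d}$ is absorbed into the implicit constant.

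The delicate step is handling the remainder $\int b\wt{f}$. Here I would exploit $\int\wt{f}=0$ and $\supp\wt{f}\subset Q^1$ to rewrite it as $\int_{Q^1}(b-\ave{b}_{Q^1})\wt{f}$, and then use $\abs{\wt{f}}\lesssim \omega(A^{-1})1_{Q^1}$ to get $\babs{\int b\wt{f}}\lesssim\omega(A^{-1})\abs{Q^1}\osc(b;Q^1)$. Since $b\in L^1_{\loc}$ forces $\osc(b;Q^1)<\infty$, taking $A$ large enough that $\omega(A^{-1})$ drops below a fixed small threshold (possible because $\omega(0)=0$) lets this error be absorbed into the left-hand side, yielding \eqref{osc1:bound}. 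This absorption is the main structural obstacle: without the zero-mean property of $\wt{f}$ baked into Proposition~\ref{prop:wf}, only a bound involving $\Norm{b}{\infty}$ would emerge and the argument would break.
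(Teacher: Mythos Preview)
Your proposal is correct and follows essentially the same route as the paper: the paper also produces a mean-zero $f$ with $\Norm{f}{\infty}\le 2$ and $\int_{Q^1}bf=\abs{Q^1}\osc(b;Q^1)$ (phrased as a duality argument rather than your explicit $s-\ave{s}_{Q^1}1_{Q^1}$ construction), applies Proposition~\ref{prop:wf}, identifies the brackets as commutator pairings, and absorbs the $\wt{f}$-error via $\int\wt{f}=0$ and a large choice of $A$. The only cosmetic point is that for complex-valued $b$ one should take $s$ to be the complex sign (i.e.\ a unimodular function), but this does not affect the argument.
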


\begin{proof} By $b-\ave{b}_{Q^1}$ having zero mean on the cube ${Q^1}$ and duality find a function $f$ with the properties $\int_{Q^1}f = 0,$ $\Norm{f}{L^{\infty}} \leq 2$ such that
	\[
	 \abs{{Q^1}}\osc(b;{Q^1}) = \int_{Q^1}  bf.
	\]
By Proposition \ref{prop:wf} we write out the function $f$ to arrive at  
\begin{align*}
		\int_{Q^1}  bf &= \int b  \big[ h_1T^{1*}(g_0,g_2) - g_0 T(h_1,g_2)\big] + \int b\big[ h_0T(g_1,g_2) - g_1T^{1*}(h_0,g_2)\big]  + \int_{Q^1}b\wt{f} \\ 
		&= -\big\langle [b,T]_1(h_1,g_2), g_0 \big\rangle  - \big\langle [b,T]_1(g_1,g_2), h_0 \big\rangle + \int_{Q^1}b\wt{f}.
\end{align*}
The claims on the line \eqref{info:size1} follow immediately from $\Norm{f}{L^{\infty}} \leq 2,$ the choice of the functions $g_i$ and the corresponding information in Proposition \ref{prop:wf}. Then,
by $\int\wt{f} = 0$ and the bound \eqref{wf:bounds} on the error term, we estimate
\[
\Babs{\int_{Q^1}b\wt{f}} = \Babs{\int_{Q^1}(b-\ave{b}_{Q^1})\wt{f}} \leq \Norm{\wt{f}}{L^{\infty}}\abs{{Q^1}}\osc(b;{Q^1}) \lesssim \omega(A^{-1})\abs{{Q^1}}\osc(b;{Q^1}).
\]
Consequently, we find that
\[
	\abs{{Q^1}}\osc(b;{Q^1}) \lesssim \abs{\big\langle [b,T]_1(h_1,g_2), g_0 \big\rangle}  + \abs{\big\langle [b,T]_1(g_1,g_2), h_0 \big\rangle} + \omega(A^{-1})\abs{{Q^1}}\osc(b;{Q^1}).
\]
Now, as $b\in L^1_{\loc},$ the common term shared on both sides of the estimate is finite, and hence, by choosing $A$ large enough, we absorb it to the left-hand side and the claim follows.
\end{proof}
The off-support norms that model the commutator norm will be given next.
When $1<r<\infty$ we will use the following off-support norm.
\begin{defn}\label{norm:diag} Let $p,q,r\in(1,\infty).$ Let $K$ be a kernel that is locally bounded outside the diagonal $\Delta$ and let $b\in L^1_{\loc}.$ Then, we define the off-support norm 
	\begin{align*}
	\mathcal{O}^A_{p,q,r}(b;K) &= \sup\Babs{\int_{Q^0}\int_{Q^2}\int_{Q^1}(b(x)-b(y))K(x,y,z)f_1(y)f_2(z) f_0(x)\ud y\ud z\ud x}\\
	&\qquad\times \abs{Q^0}^{-(1/p+1/q+1/r')},
	\end{align*}
	where the supremum is taken over all triples of cubes $Q^0,Q^1,Q^2$ of the same size such that $\max_{a,b\in\{0,1,2\}} \dist(Q^a,Q^b) \sim A\diam(Q^0)$
	and over all functions $f_a$ such that $\abs{f_a} \leq 1_{Q^a}$ for $a\in\{0,1,2\}.$
\end{defn}

\begin{rem}
	It is immediate from H{\"o}lder's inequality that 
	$$
	\mathcal{O}^A_{p,q,r}(b;K) \leq \Norm{[b,T]_1}{L^p\times L^q \to L^r},
	$$
	whenever $K$ is the kernel of $T$. For example, when $r=1$ and $r' = \infty$, we have
	\begin{align*}
	\mathcal{O}^A_{p,q,r}(b;K) &= \sup\babs{\langle f_0 1_{Q^0}, [b,T]_1(f_1,f_2)\rangle}\times \abs{Q^1}^{-(1/p+1/q)} \\
	&\leq  \sup \Norm{f_0}{\infty}\Norm{[b,T]_1(f_1,f_2)}{L^1} \abs{Q^1}^{-(1/p+1/q)} \leq  \Norm{[b,T]_1}{L^p\times L^q\to L^1}.
	\end{align*}
\end{rem}
When $0<r<1$ we will use the following off-support norm.
\begin{defn} \label{norm:quasi:offspt} Let $r\in(0,\infty)$ and $p,q\in(1,\infty),$ let $K$ be any kernel that is locally bounded outside the diagonal and let $b\in L^1_{\loc}.$ 
	Then, we define the weak off-support norm $\mathcal{O}^{\infty,A}_{p,q,r}(b;K) $ to be the smallest constant $C$ such that for all triples of cubes $Q^0,Q^1,Q^2$  of the same size and functions $f_1,f_2$ satisfying 
	$$
	\underset{a,b\in\{0,1,2\} }{\max}\dist(Q^a,Q^b) \sim A\diam(Q^0),\qquad 	\abs{f_1}\leq 1_{Q^1},\quad \abs{f_2}\leq 1_{Q^2},
	$$
	there exists a major subset $F'\subset Q^0$  such that if $\abs{f_0}\leq 1_{F'},$ then
	\begin{align*}
 \Babs{\int_{Q^0}\int_{Q^2}\int_{Q^1}(b(x)-b(y))K(x,y,z)f_1(y)f_2(z) f_0(x)\ud y\ud z\ud x}\leq C \abs{Q^0}^{1/p+1/q+1/r'}.
	\end{align*}
\end{defn}

We now fix the constant $A$ to be so large that all the above propositions where it appears are applicable. Hence, we will also drop the superscript $A$ from the  off-support norms \ref{norm:diag} and \ref{norm:quasi:offspt} and only write  $\mathcal{O}^{\infty}_{p,q,r}, \mathcal{O}_{p,q,r}.$
 As $\mathcal{O}^{\infty}_{p,q,r}(b;K) \leq \mathcal{O}_{p,q,r}(b;K),$ also
$\mathcal{O}^{\infty}_{p,q,r}$ is a reasonable off-support norm in the Banach range of exponents.
Before connecting the off-support norms  to the commutator, we remark the following a priori upper bound.
\begin{rem}  If $K$ is a bilinear kernel satisfying the size estimate \eqref{kernel:size}, then 
	\[
	\mathcal{O}_{p,q,r}(b;K) \lesssim 		\sup_{Q}\ell(Q)^{\alpha}\fint_Q\abs{b-\ave{b}_Q}.
	\]
	This is quickly seen as follows: fix triples $Q^i,f_i$ for $i\in\{0,1,2\}$ as in the Definition \ref{norm:diag} and let $\wt{Q}$ be a minimal cube such that $Q^0,Q^1\subset \wt{Q}.$ Then, by the triangle inequality we see that it is enough to control two symmetric terms of which the other one is and is controlled as
	\begin{align*}
	&\abs{Q^1}^{-\big(1/p+1/q+1/r'\big)}\int_{Q^0}\int_{Q^2}\int_{Q^1} \abs{b(x)-\ave{b}_{\wt{Q}}}\abs{K(x,y,z)f_1(y)f_2(z) f_0(x)}\ud y\ud z\ud x \\
	&\lesssim\abs{Q^1}^{-\big(1/p+1/q+1/r'\big)} (A\ell(Q^1))^{-2d}\abs{Q^1}^2\int_{Q^0} \abs{b(x)-\ave{b}_{\wt{Q}}}\ud x \\ 
	&\sim \ell(\wt{Q})^{-\alpha} \fint_{\wt{Q}}\abs{b(x)-\ave{b}_{\wt{Q}}}\ud x.
	\end{align*}
\end{rem}

Next, we relate the weak off-support norm $\mathcal{O}^{\infty}_{p,q,r}(b;K)$ to the commutator. For this, recall that a function $f$ belongs to the space $L^{s,\infty}(\R^d),$  $0<s<\infty,$ if 
\[
\Norm{f}{L^{s,\infty}(\R^d)} := \sup_{\lambda>0}\lambda\abs{ \{ x\in\R^d: \abs{f(x)}>\lambda \}}^{1/s} < \infty. 
\]
Also, recall that $\Norm{f}{L^{s,\infty}} \leq \Norm{f}{L^s},$ $s>0.$ The following Lemma \ref{lem:quasidual} is standard, see e.g. Section 2.4. Dualization of quasi-norms in the book \cite{MuscaluSchlag2013vol2} of Muscalu and Schlag. 

\begin{lem}\label{lem:quasidual} Let $s\in(0,\infty)$ and fix a constant $C>0.$ Then, the following are equivalent:
	\begin{enumerate}[(i)]
		\item  There holds that $\Norm{\psi}{L^{s,\infty}(\R^d)} \lesssim C.$
		\item For each set $F$ with $\abs{F}\in(0,\infty)$, there exists a major subset $F'\subset F$ such that for all functions $\abs{g}\leq 1_{F'},$ there holds that $\abs{\langle \psi,g\rangle}\lesssim C\abs{F}^{1/s'}.$
		%		\item For each set $F$ with $\abs{F}\in(0,\infty)$, there exists a major subset $F'\subset F$ such that for all functions $\abs{g} = 1_{F'},$ there holds that $\abs{\langle \psi,g\rangle}\lesssim C\abs{F}^{1/s'}.$
	\end{enumerate}
\end{lem}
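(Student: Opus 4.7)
The plan is a standard two-direction argument exploiting the layer-cake / major-subset duality for weak $L^s$.

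For the implication $(i) \Rightarrow (ii)$, I first translate the weak type bound into a quantitative statement on the measure of the level sets: there is an absolute constant $c$ with $|\{ |\psi| > \lambda \}| \leq (cC/\lambda)^s$ for every $\lambda > 0$. Given a set $F$ with $|F| \in (0,\infty)$, I pick a threshold $\lambda_F = K C |F|^{-1/s}$ with $K$ a large absolute constant, and define
\[
F' = F \setminus \{ |\psi| > \lambda_F \}.
\]
The preceding level-set bound yields $|F \setminus F'| \leq (c/K)^s |F|$, so choosing $K$ large forces $|F'| > \gamma |F|$, making $F'$ a major subset. Then for any $|g| \leq 1_{F'}$, the pointwise bound $|\psi| \leq \lambda_F$ on $F'$ gives
\[
|\langle \psi, g \rangle| \leq \lambda_F |F'| \leq \lambda_F |F| = K C |F|^{1/s'},
\]
which is exactly $(ii)$.

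For the converse $(ii) \Rightarrow (i)$, the task is to control $|\{ |\psi| > \lambda \}|$ for each fixed $\lambda > 0$. The natural move is to test $(ii)$ against the ``sign'' of $\psi$ on a level set, but $(ii)$ is stated only for sets of finite measure, so I exhaust the super-level set by finite-measure subsets. Fix $\lambda > 0$ and any $F \subset \{ |\psi| > \lambda \}$ with $|F| \in (0,\infty)$. Apply $(ii)$ to this $F$ to obtain a major subset $F' \subset F$ and then choose $g = \overline{\sign(\psi)}\, 1_{F'}$, which satisfies $|g| \leq 1_{F'}$. On the one hand $\langle \psi, g \rangle = \int_{F'} |\psi|$, and on the other hand $|\psi| > \lambda$ on $F' \subset \{ |\psi| > \lambda \}$. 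Combining with $(ii)$:
\[
\lambda \gamma |F| < \lambda |F'| \leq \int_{F'} |\psi| = |\langle \psi, g \rangle| \lesssim C |F|^{1/s'}.
\]
Dividing gives $|F|^{1/s} \lesssim C/\lambda$. Taking the supremum over such $F$ (in particular over intersections with large balls, which handles the case where the level set has infinite measure by showing it cannot) yields $|\{ |\psi| > \lambda \}|^{1/s} \lesssim C/\lambda$, i.e.\ $(i)$.

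The only technical point that requires care is the finite-measure restriction in $(ii)$ when carrying out the converse direction; the rest is bookkeeping of absolute constants. Since both implications carry universal constants independent of $C$ and $\psi$, the equivalence in $(i) \Leftrightarrow (ii)$ holds as stated.
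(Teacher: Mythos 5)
Your proof is correct, and it is exactly the standard dualization-of-quasi-norms argument that the paper itself does not write out but delegates to Section 2.4 of Muscalu--Schlag: for $(i)\Rightarrow(ii)$ one removes the super-level set $\{|\psi|>\lambda_F\}$ at the natural threshold $\lambda_F\sim C|F|^{-1/s}$, and for $(ii)\Rightarrow(i)$ one tests against $\overline{\sign(\psi)}1_{F'}$ on finite-measure subsets of a level set. The only cosmetic point worth adding is that in the converse direction one may first test with $\overline{\sign(\psi)}1_{F'\cap\{|\psi|\leq N\}}$ and let $N\to\infty$ to guarantee the pairing is finite; otherwise the argument and the bookkeeping of constants are exactly as in the cited reference.
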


Taken together, the following two propositions control the oscillation with the commutator norm.
\begin{prop}\label{prop:norm:weak:bound} Let $p,q,r\in (0,\infty)$ be arbitrary exponents. Then, there holds that 
	\begin{align}\label{xxx}
	\mathcal{O}^{\infty}_{p,q,r}(b;K) \lesssim \Norm{[b,T]_1}{L^p\times L^q\to L^{r,\infty}},
	\end{align}
	whenever $T$ has the kernel $K$ and the commutator is well-defined.
\end{prop}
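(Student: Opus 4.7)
The plan is to recognize that the trilinear integral appearing in Definition \ref{norm:quasi:offspt} is exactly the dual pairing $\langle [b,T]_1(f_1,f_2), f_0\rangle$, and then to extract the required major subset via the weak-type duality of Lemma \ref{lem:quasidual}.

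First, I would check the off-support kernel representation. Fix cubes $Q^0, Q^1, Q^2$ of equal size with pairwise distances $\sim A\diam(Q^0)$ (so the cubes are pairwise disjoint, as $A$ has been fixed large), and fix functions $f_1, f_2$ with $|f_i|\leq 1_{Q^i}$. For any admissible $f_0$ supported in $Q^0$, the triple $\supp(f_1)\cap\supp(f_2)\cap\supp(f_0) \subset Q^1\cap Q^2 = \emptyset$ (and the same holds with $f_0$ replaced by $bf_0$), so Definition \ref{defn:SIO:variable} applies to both $\langle T(f_1,f_2), bf_0\rangle$ and $\langle T(bf_1,f_2), f_0\rangle$. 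Subtracting these two kernel representations yields
\begin{align*}
\langle [b,T]_1(f_1,f_2), f_0\rangle = \int_{Q^0}\int_{Q^2}\int_{Q^1}(b(x)-b(y))K(x,y,z)f_1(y)f_2(z)f_0(x)\ud y\ud z\ud x,
\end{align*}
which is precisely the quantity appearing in Definition \ref{norm:quasi:offspt}.

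Next, set $\psi = [b,T]_1(f_1,f_2)$. From $|f_1|\leq 1_{Q^1}$, $|f_2|\leq 1_{Q^2}$ and $|Q^0| = |Q^1| = |Q^2|$,
\begin{align*}
\Norm{\psi}{L^{r,\infty}} \leq \Norm{[b,T]_1}{L^p\times L^q\to L^{r,\infty}}\Norm{f_1}{L^p}\Norm{f_2}{L^q} \leq \Norm{[b,T]_1}{L^p\times L^q\to L^{r,\infty}}|Q^0|^{1/p+1/q}.
\end{align*}

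Finally, I apply the implication $(i)\Rightarrow(ii)$ of Lemma \ref{lem:quasidual} with $s=r$ to the set $F = Q^0$ and the function $\psi$: this produces a major subset $F'\subset Q^0$ such that for every $|f_0|\leq 1_{F'}$,
\begin{align*}
|\langle \psi, f_0\rangle| \lesssim \Norm{[b,T]_1}{L^p\times L^q\to L^{r,\infty}}|Q^0|^{1/p+1/q+1/r'},
\end{align*}
which is exactly the bound defining $\mathcal{O}^{\infty}_{p,q,r}(b;K)$. The only conceptual point to keep in mind is that $F'$ depends on $\psi$, hence on the particular choice of $f_1, f_2$; this is permitted by Definition \ref{norm:quasi:offspt}, which allows the major subset to depend on the fixed pair $f_1, f_2$ before quantifying over $f_0$. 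No significant technical obstacle arises.
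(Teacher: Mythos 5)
Your proposal is correct and follows essentially the same route as the paper: identify the trilinear integral with the pairing $\langle [b,T]_1(f_1,f_2),f_0\rangle$, bound $\Norm{[b,T]_1(f_1,f_2)}{L^{r,\infty}}$ by the operator norm times $|Q^0|^{1/p+1/q}$, and invoke Lemma \ref{lem:quasidual} with $F=Q^0$ to produce the major subset. The only (harmless) imprecision is the claim that the cubes are pairwise disjoint — the definition only forces the \emph{maximum} of the pairwise distances to be $\sim A\diam(Q^0)$ — but the triple intersection is empty in any case, which is all the kernel representation requires.
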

\begin{proof}
	Consider a triple $Q^0,Q^1,Q^2$ and functions $f_1,f_2$ as in the Definition \ref{norm:quasi:offspt} of\\  $\mathcal{O}^{\infty}_{p,q,r}(b;K).$ Clearly we may assume that the right-hand side of \eqref{xxx} is finite and hence that  $\Norm{[b,T]_1(f_1,f_2)}{L^{r,\infty}}<\infty.$  Then, denote $F = Q^0$ and let $F'\subset F$ be the major subset given by the item $(2)$ in Lemma \ref{lem:quasidual} such that for all functions $\abs{f_0}\leq 1_{F'}$ there holds that
  \begin{align*}
	&\abs{\int_{Q^0}\int_{Q^2}\int_{Q^1}(b(x)-b(y))K(x,y,z)f_1(y)f_2(z) f_0(x)\ud y\ud z\ud x} \\
	& =\abs{\big\langle [b,T]_1(f_1,f_2), f_0 \big\rangle} \lesssim \Norm{[b,T](f_1,f_2)}{L^{r,\infty}}\abs{F}^{1/r'}\lesssim \Norm{[b,T]_1}{L^p\times L^q\to L^{r,\infty}}\abs{Q^0}^{1/p+1/q+1/r'},
	\end{align*}
	which implies the claim.
\end{proof}

\begin{prop}\label{prop:osc1:bound1}Let $p,q,r\in (0,\infty)$ be arbitrary exponents and let $K$  be a bilinear non-degenerate kernel. Then, for all cubes $Q^1\subset\R^d$ there holds that 
	\begin{align*}
		\osc(b;{Q^1})  \lesssim \mathcal{O}^{\infty}_{p,q,r}(b;K) \abs{{Q^1}}^{1/p+1/q-1/r}.
	\end{align*}
\end{prop}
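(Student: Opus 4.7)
The plan is to combine Proposition \ref{prop:osc1:bound}, which for any choice of indicator functions $g_i = 1_{E_{Q^i}}$ of major subsets yields
\[
\abs{Q^1}\osc(b;Q^1) \lesssim \abs{\bigl\langle [b,T]_1(h_1,g_2), g_0\bigr\rangle} + \abs{\bigl\langle [b,T]_1(g_1,g_2), h_0\bigr\rangle},
\]
with the definition of $\mathcal{O}^{\infty}_{p,q,r}(b;K)$. Unfolding each pairing as a triple integral against the kernel $K$, both fit exactly the form appearing in Definition \ref{norm:quasi:offspt}. The goal will be to bound each pairing by $\mathcal{O}^{\infty}_{p,q,r}(b;K)\,\abs{Q^0}^{1/p+1/q+1/r'}$, so that dividing by $\abs{Q^1}\sim\abs{Q^0}$ and using $1/r'=1-1/r$ gives the stated inequality.

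Concretely, fix $Q^0,Q^2$ via Proposition \ref{prop:bootstrap}, set $E_{Q^1}=Q^1$, $E_{Q^2}=Q^2$, and leave $E_{Q^0}\subset Q^0$ to be selected. For the second pairing, the inputs $g_1=1_{Q^1}$ and $g_2=1_{Q^2}$ are independent of $E_{Q^0}$, so Definition \ref{norm:quasi:offspt} provides a major subset $F_2'\subset Q^0$ on which the desired bound holds whenever $\abs{f_0}\leq 1_{F_2'}$; since $\abs{h_0}\lesssim\omega(A^{-1})1_{E_{Q^0}}$, enforcing $E_{Q^0}\subset F_2'$ and absorbing the harmless $\omega(A^{-1})$ yields the required bound on this pairing. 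For the first pairing, $\abs{h_1}\lesssim 1_{Q^1}$ allows applying Definition \ref{norm:quasi:offspt} to inputs $(h_1,g_2)$, producing a major subset $F_1'\subset Q^0$; the bound applies as long as $E_{Q^0}\subset F_1'$.

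The main obstacle is the apparent circularity: $F_1'$ is generated from $h_1=f/T^{1*}(g_0,g_2)$, which depends on the very set $g_0=1_{E_{Q^0}}$ we are trying to constrain. I resolve this by exploiting that the bound $\abs{h_1}\lesssim 1_{Q^1}$ holds with a constant independent of the choice of $E_{Q^0}$, so the major subsets $F_1'(E)$ come with a uniform lower bound on their measure; one may then select $E_{Q^0}$ as a major subset contained in $F_1'\cap F_2'$ via a fixed-point/iteration argument (e.g., $E_{Q^0}^{(k+1)} = F_1'(E_{Q^0}^{(k)})\cap F_2'$ starting from $E_{Q^0}^{(0)}=Q^0$, the decreasing intersection remaining major by uniformity of the majorness parameter). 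With this choice both pairings obey the desired bound, and the proposition follows.
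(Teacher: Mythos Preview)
Your overall strategy matches the paper's: fix $g_1=1_{Q^1}$, $g_2=1_{Q^2}$, choose $g_0=1_{E_{Q^0}}$ for a suitable major subset of $Q^0$, invoke Proposition~\ref{prop:osc1:bound}, and bound each of the two resulting pairings by $\mathcal{O}^{\infty}_{p,q,r}(b;K)\abs{Q^0}^{1/p+1/q+1/r'}$. The paper's proof is terser: it simply selects a single major subset $F'$ (coming from the pair $(g_1,g_2)$), sets $g_0=1_{F'}$, and records the conclusion without further comment on the first pairing. You go further and flag the circular dependence of $F_1'$ on $h_1$ on $g_0$; that is a legitimate subtlety.

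However, your proposed resolution via the iteration $E^{(k+1)}=F_1'(E^{(k)})\cap F_2'$ does not work. Three things fail. First, the sequence is \emph{not} decreasing: the assignment $E\mapsto F_1'(E)$, which passes through $h_1=f/T^{1*}(1_E,g_2)$ and then through the existential quantifier in Definition~\ref{norm:quasi:offspt}, has no monotonicity whatsoever, so there is no reason for $F_1'(E^{(k)})\subset F_1'(E^{(k-1)})$ or for $E^{(k+1)}\subset E^{(k)}$. Second, even if you force a decreasing sequence by taking cumulative intersections, each new $F_1'(E^{(k)})$ is only guaranteed to be $\gamma$-major for a \emph{fixed} $\gamma$, so the intersection loses up to $(1-\gamma)\abs{Q^0}$ of mass at every step; ``uniformity of the majorness parameter'' gives no lower bound on an infinite intersection. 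Third, and most fundamentally, nothing guarantees a fixed point: the map $E\mapsto F_1'(E)$ has no continuity in any reasonable sense (it comes from an existential choice), so even if a limit set $E^{(\infty)}$ existed and were major, you would have no way to conclude $E^{(\infty)}\subset F_1'(E^{(\infty)})$, which is exactly what you need.

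In short, you have correctly located a point the paper's two-line proof passes over, but the iteration you sketch to close it is not a valid argument.
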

\begin{proof} Fix a cube $Q^1$ and let $Q^0,Q^2$ be the cubes given by Proposition \ref{prop:wf} and let  $g_i = 1_{Q^i},$ $i=1,2.$ Then, according to the definition of $\mathcal{O}^{\infty}_{p,q,r}(b;K)$  let $F'\subset Q^0$ be a major subset and define $g_0 = 1_{F'}.$ Then, by Proposition \ref{prop:osc1:bound}, we find that
	\begin{align*}
	\abs{{Q^1}}\osc(b;{Q^1}) \lesssim     \abs{\big\langle [b,T]_1(h_1,g_2), g_0 \big\rangle}  + \abs{\big\langle [b,T]_1(g_1,g_2), h_0 \big\rangle} \lesssim \mathcal{O}^{\infty}_{p,q,r}(b;K) \abs{{Q^1}}^{1/p+1/q+1/r'},
	\end{align*}
	which is the claim rearranged.
\end{proof}

\section{The cases $r^{-1}\leq \sigma(p,q)^{-1}$}
 In this section we will be either on the diagonal, meaning that $r^{-1} = \sigma(p,q)^{-1},$ or on the sub-diagonal, meaning that $r^{-1} < \sigma(p,q)^{-1}.$ In both cases the lower bounds formulate simultaneously in Theorem \ref{thm:diag/subdiag/lb} and the upper bounds in theorems \ref{thm:diag/ub} and \ref{thm:subdiag/ub}.

\begin{thm}\label{thm:diag/subdiag/lb} Let $b\in L^1_{\loc}$ and let $0<r,p,q<\infty$ be such that  $r^{-1} \leq \sigma(p,q)^{-1}$ and let $\alpha := d\big( \sigma(p,q)^{-1} - r^{-1}\big).$ Suppose that $K$ is a bilinear non-degenerate CZ-kernel, then 
		\begin{align*}
			\sup_{Q}\ell(Q)^{-\alpha}\fint_Q\abs{b-\ave{b}_Q} 
			\lesssim {O}_{p,q,r}^{\infty}(b;K) \lesssim \Norm{[b,T]_1}{L^p\times L^q\to L^{r,\infty}}
		\end{align*}
\end{thm}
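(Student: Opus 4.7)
The plan is to observe that Theorem \ref{thm:diag/subdiag/lb} is essentially a packaging of the two preceding propositions, \ref{prop:osc1:bound1} and \ref{prop:norm:weak:bound}, so that the genuine work has already been done; one only has to check that the exponents line up.

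For the right-hand inequality $\mathcal{O}^{\infty}_{p,q,r}(b;K) \lesssim \Norm{[b,T]_1}{L^p\times L^q\to L^{r,\infty}}$, I would simply invoke Proposition \ref{prop:norm:weak:bound}, which yields precisely this estimate whenever the commutator is well-defined in the required duality pairing. If the right-hand side is infinite, there is nothing to prove; if finite, the commutator maps into $L^{r,\infty}$ and the pairing in the definition of $\mathcal{O}^{\infty}_{p,q,r}$ is finite.

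For the left-hand inequality, I would fix an arbitrary cube $Q\subset \R^d$. Proposition \ref{prop:osc1:bound1} gives
\[
\osc(b;Q) \lesssim \mathcal{O}^{\infty}_{p,q,r}(b;K) \,\abs{Q}^{1/p+1/q-1/r}.
\]
Since $\abs{Q}=\ell(Q)^d$ and $1/p+1/q-1/r = \sigma(p,q)^{-1}-r^{-1} = \alpha/d$, the volume factor reads as $\ell(Q)^{\alpha}$. Rearranging and then taking the supremum over all cubes $Q$ produces
\[
\sup_Q \ell(Q)^{-\alpha} \fint_Q \abs{b-\ave{b}_Q} = \sup_Q \ell(Q)^{-\alpha} \osc(b;Q) \lesssim \mathcal{O}^{\infty}_{p,q,r}(b;K),
\]
which is the desired bound.

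There is no hard step remaining at this stage: the two main obstacles were already overcome earlier in the section. First, producing the oscillation estimate against the off-support quantity required the bilinear awf decomposition from Proposition \ref{prop:wf} (together with its consequence Proposition \ref{prop:osc1:bound}), where the delicate part was absorbing the error term $\wt f$ into the left-hand side after choosing $A$ large. Second, converting a weak-type commutator bound into the off-support quantity relied on Lemma \ref{lem:quasidual}, which is exactly the dualization device that allows one to work in the quasi-Banach range $r\le 1$ through a major subset. Once those are in place, the theorem is a one-line rearrangement plus a trivial supremum.
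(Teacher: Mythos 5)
Your proposal is correct and coincides with the paper's own proof, which simply cites Propositions \ref{prop:norm:weak:bound} and \ref{prop:osc1:bound1}; your exponent bookkeeping $|Q|^{1/p+1/q-1/r}=\ell(Q)^{\alpha}$ is exactly the rearrangement needed.
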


\begin{proof} Follows immediately from propositions \ref{prop:norm:weak:bound} and \ref{prop:osc1:bound1}. 
\end{proof}

The following upper bound is well-known and is recorded e.g. in \cite{Li2020multilinear}.
\begin{thm}\label{thm:diag/ub} Let $\frac{1}{2}<r<\infty$ and $1<p,q<\infty$ be such that $\frac{1}{r} = \frac{1}{p}+\frac{1}{q}$ and $T\in\CZO(2,d,\omega).$ Then, $$\Norm{[b,T]_1}{L^p\times L^q\to L^r} \lesssim \Norm{b}{\BMO}.$$
\end{thm}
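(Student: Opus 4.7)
The plan is to derive a pointwise domination of $[b,T]_1(f,g)$ by $\Norm{b}{\BMO}$ times a bilinear sparse/maximal operator, which then yields the desired $L^p\times L^q\to L^r$ bound in the full range $\tfrac12<r<\infty$, $1<p,q<\infty$, via the (by-now standard) $L^p\times L^q\to L^r$ boundedness of bilinear sparse forms. The starting observation is that constants commute through $T$, so for any cube $Q$,
\[
[b,T]_1(f,g)(x) \;=\; \bigl(b(x)-\ave{b}_Q\bigr)\,T(f,g)(x) \;-\; T\bigl((b-\ave{b}_Q)f,\,g\bigr)(x),
\]
which reduces everything to exploiting the zero mean of $b-\ave{b}_Q$ on $Q$ combined with John--Nirenberg, giving $\Norm{(b-\ave{b}_Q)1_Q}{L^s}\lesssim\Norm{b}{\BMO}|Q|^{1/s}$ for every $s<\infty$.

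After a stopping-time selection of cubes $Q$ adapted to $(f,g)$, I would split $f=f1_{3Q}+f1_{(3Q)^c}$. The local piece $T((b-\ave{b}_Q)f1_{3Q},g)$ is controlled by Hölder's inequality plus the assumed $L^{p}\times L^{q}\to L^{\sigma(p,q)}$ boundedness of $T$, where the John--Nirenberg improvement absorbs the $b-\ave{b}_Q$ factor into an $L^s$-maximal function; this produces a bound $\Norm{b}{\BMO}\,M_s f(x)\,M_s g(x)$ with $s>1$ close to $1$. The far piece is handled by the kernel size estimate \eqref{kernel:size} through an annular decomposition $2^{k+1}Q\setminus 2^kQ$: the decay $2^{-kd}$ defeats the $|\ave{b}_{2^kQ}-\ave{b}_Q|\lesssim k\Norm{b}{\BMO}$ growth, leaving a convergent geometric series bounded by $\Norm{b}{\BMO}\,Mf(x)Mg(x)$. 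The pointwise term $(b(x)-\ave{b}_Q)T(f,g)(x)$ is dealt with analogously, with the BMO factor being tamed along the stopping chain.

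Assembling these estimates produces a pointwise bound of the form
\[
\abs{[b,T]_1(f,g)(x)}\;\lesssim\;\Norm{b}{\BMO}\sum_{Q\in\mathcal{S}} \bigl(\ave{|f|}_{s,Q}\ave{|g|}_{Q} + \ave{|f|}_{Q}\ave{|g|}_{s,Q}\bigr)1_Q(x),
\]
with $\mathcal{S}$ a sparse family, and $L^p\times L^q\to L^r$ boundedness of such sparse forms (for $s$ slightly above $1$, so $s<p$ and $s<q$) closes the argument, as $\tfrac1r=\tfrac1p+\tfrac1q$ lies in the admissible range.

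The main obstacle I expect is the quasi-Banach range $\tfrac12<r<1$. Here a direct Hölder split of the local piece would require strong-type $L^{p}\times L^{q}\to L^{r_1}$ estimates for $T$ with $r_1>\sigma(p,q)$, which need not hold, so one must instead either invoke weak-type endpoint bounds with Kolmogorov's inequality on $Q$ (exploiting that a sufficiently small exponent average of $T(f,g)$ on $Q$ is controlled by maximal averages), or directly use the by-now standard bilinear sparse domination of $T$ and verify that the resulting sparse commutator form is bounded into $L^r$ for all $r>\tfrac12$. The restriction $r>\tfrac12$ arises precisely from the Hölder gap $\tfrac1r-\tfrac1{\sigma(p,q)}$ needed to fit in the extra BMO-contribution via a small integrability gain from John--Nirenberg.
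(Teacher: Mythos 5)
Your proposal addresses a statement for which the paper supplies no proof at all: Theorem \ref{thm:diag/ub} is simply recorded as ``well-known'' with a citation to \cite{Li2020multilinear}, so there is nothing in the paper to compare against line by line. Your sketch is the standard modern route to this upper bound --- pointwise sparse domination of the bilinear commutator \`a la Lerner--Ombrosi--Rivera-R\'{\i}os, with the oscillation $b-\ave{b}_Q$ absorbed via John--Nirenberg into slightly enlarged averages $\ave{\abs{f}}_{s,Q}$, followed by the known $L^p\times L^q\to L^r$ bounds for bilinear sparse forms --- and as an outline it is sound: you correctly identify that the quasi-Banach range is the delicate point and that the fix is Kolmogorov's inequality with the weak-type endpoint (or, equivalently, quoting the sparse domination of $T$ itself, which only needs the single-tuple boundedness that Definition \ref{defn:CZO:multilinear}(1) provides). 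Two small corrections: first, the restriction $r>\tfrac12$ on the diagonal is not produced by any H\"older gap in your argument --- it is automatic, since $\tfrac1r=\tfrac1p+\tfrac1q<2$ whenever $p,q>1$; second, the paper only assumes $T$ bounded at \emph{one} tuple of exponents, which need not be the $(p,q)$ of the theorem, so your appeal to ``the assumed $L^p\times L^q\to L^{\sigma(p,q)}$ boundedness'' should be routed through the sparse domination (which upgrades one tuple to all tuples) rather than used directly on the local piece. With those adjustments your argument is a legitimate self-contained proof of a result the paper outsources to the literature.
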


The sub-diagonal upper bound in Theorem \ref{thm:subdiag/ub} requires some preparation consisting of extending parts from the linear theory to the bilinear setting. We refer the reader to Grafakos \cite{GrafakosMFA} for a complete account of the corresponding linear theory.

\begin{prop}\label{prop:difference} Let $U,T:\Sigma\times\Sigma \to L^1_{\loc}$ be bilinear SIOs with the same kernel.
	Then, there exists a function  $m\in L^1_{\loc}$ so that $(U-T)(f_1,f_2) = mf_1f_2$ for all $f_1,f_2\in\Sigma.$
	
	In addition, if $U,T$ are CZOs (bounded), then the identity $(U-T)(f_1,f_2) = mf_1f_2$ extends to all functions $f_1,f_2\in L^{\infty}_c$ and the function $m$ is bounded.
\end{prop}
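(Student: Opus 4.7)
The plan is to set $V := U - T$, a bilinear SIO whose kernel vanishes identically, and extract a pointwise multiplier $m$ in stages -- first on a dyadic grid via diagonal evaluations, then on arbitrary cubes, and finally (under the CZO assumption) extended to $L^\infty_c$. As a preliminary, the SIO definition gives $\pair{V(f_1,f_2)}{g} = 0$ for all $f_1,f_2,g \in \Sigma$ with $\supp(f_1)\cap\supp(f_2)\cap\supp(g) = \emptyset$, and I would first upgrade this to the statement that $V(f_1,f_2) \in L^1_{\loc}$ vanishes a.e.\ off the closed set $\supp(f_1)\cap\supp(f_2)$, even when the latter has Lebesgue measure zero. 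To do so, for any cube $R$ whose closure misses a neighbourhood of $\supp(f_1)\cap\supp(f_2)$ the triple intersection is empty and hence $\int_R V(f_1,f_2) = 0$; such cubes accumulate at every Lebesgue point outside the closed null set, so Lebesgue differentiation forces the vanishing. Two consequences: $V(1_A,1_B) = 0$ whenever cubes $A,B$ are interior-disjoint, and $V(1_A,1_A)$ is concentrated on $A$.

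Next, I would fix a dyadic grid $\calD$ and set $m_Q := V(1_Q,1_Q)$ for $Q \in \calD$. For $P \subsetneq Q$ in $\calD$, decompose $1_Q = 1_P + \sum_j 1_{R_j}$ along the finite dyadic partition of $Q \setminus P$, expand $V(1_Q,1_Q)$ bilinearly, kill each cross term by the preliminary (the two inputs have interior-disjoint supports), and use the support localization of the residual terms $V(1_{R_j},1_{R_j})$ to restrict to $P$ and read off $m_Q|_P = m_P$. Letting $Q \uparrow \R^d$ along a chain glues the $m_Q$ into a single coherent $m \in L^1_{\loc}(\R^d)$ with $V(1_Q,1_Q) = m 1_Q$ on $\calD$.

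To promote the identity to an arbitrary cube $A$, pick any dyadic $D \supset A$ and write $1_D = 1_A + (1_D - 1_A)$, viewing the second summand as a \emph{formal} $\Sigma$-element. Expanding $m 1_D = V(1_D,1_D)$ bilinearly, the two cross terms vanish by the null-intersection case of the preliminary (their essential supports meet only on $\partial A$), and the residual $V(1_D - 1_A, 1_D - 1_A)$ is supported off $A^\circ$; restriction to $A^\circ$ then gives $V(1_A,1_A) = m 1_A$. For two cubes $A,B$ and each $x \in A^\circ \cap B^\circ$, pick a small cube $R \subset A^\circ \cap B^\circ$ containing $x$ and expand $V(1_A,1_B)$ via $1_A = 1_R + (1_A - 1_R)$ and the analogous decomposition of $1_B$; each cross term either vanishes by the preliminary or has support disjoint from $R^\circ$, leaving $V(1_A,1_B) = V(1_R,1_R) = m$ on $R^\circ$. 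Varying $R$ covers $A^\circ \cap B^\circ$, and together with $V(1_A,1_B) = 0$ off $\overline{A \cap B}$ this gives $V(1_A,1_B) = m 1_{A \cap B}$. Bilinearity then extends the identity to all of $\Sigma \times \Sigma$.

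Under the CZO assumption, $V$ is bounded $L^p \times L^q \to L^{\sigma(p,q)}$ for some $p,q \in (1,\infty)$, so testing on $f_1 = f_2 = 1_B$ yields $\Norm{m 1_B}{L^{\sigma(p,q)}} \lesssim |B|^{1/p + 1/q} = |B|^{1/\sigma(p,q)}$ uniformly in cubes $B$, hence $\fint_B \abs{m}^{\sigma(p,q)} \lesssim 1$, and Lebesgue differentiation forces $m \in L^\infty(\R^d)$. Density of $\Sigma$ in $L^p$, together with boundedness of $V$ and of the multiplication $f \mapsto mf$, then extends $V(f_1,f_2) = m f_1 f_2$ from $\Sigma \times \Sigma$ to $L^\infty_c \times L^\infty_c$. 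I expect the main obstacle to be the boundary bookkeeping in the third paragraph: $1_D - 1_A$ must be handled as a formal bilinear input whose essential support touches that of $1_A$ on the null set $\partial A$, so the vanishing of the mixed terms rests squarely on the null-intersection strengthening of the kernel-zero property that the preliminary step was designed to supply.
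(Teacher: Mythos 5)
Your proposal is correct, but it reaches the conclusion by a genuinely different route from the paper's. The paper exploits that $U$ and $T$ have identical truncations $U_\vare=T_\vare$: for each point $x$ it takes $\vare=\tfrac12\dist(x,\partial Q)$ and verifies pointwise that $(H-H_\vare)(1_Qf_1,f_2)(x)=1_Q(x)(H-H_\vare)(f_1,f_2)(x)$ for $H\in\{U,T\}$, so that subtraction yields the consistency identity $(U-T)(1_Qf_1,1_Qf_2)=1_Q(U-T)(f_1,f_2)$ for all $f_1,f_2\in\Sigma$ at once, after which $m$ is defined by $m1_Q=1_Q(U-T)(1_Q,1_Q)$. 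You never invoke truncations: you use only the defining distributional kernel representation, upgraded by Lebesgue differentiation to the statement that $V=U-T$ vanishes a.e.\ off $\supp(f_1)\cap\supp(f_2)$, and then run bilinear expansions over cube decompositions in which every cross term dies because the relevant supports meet in a Lebesgue-null set. This buys an argument that needs no pointwise interpretation of $H$ and $H_\vare$ on $\Sigma$, at the price of more combinatorial bookkeeping; the paper's consistency condition disposes of general simple inputs in one stroke. Two small repairs to your write-up, neither a genuine gap: (i) not every cube $A$ sits inside a cube of a fixed standard dyadic grid (e.g.\ $[-1,1]^d$ does not), so in your third paragraph $D$ should be an arbitrary cube containing $A$ --- and since your expansion of $V(1_D,1_D)$ proves the consistency $V(1_D,1_D)|_{A^\circ}=V(1_A,1_A)$ for any nested pair of cubes, the dyadic scaffolding of your second paragraph is in fact redundant; (ii) the closed set $\supp(f_1)\cap\supp(f_2)$ in your preliminary is not null in general --- what you actually use, correctly, is that in each application it is contained in a boundary or face, hence null. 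Your final CZO step (testing on $1_B$ to get $\fint_B|m|^{\sigma(p,q)}\lesssim 1$, hence $m\in L^\infty$, then extending by density) coincides with the paper's.
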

\begin{proof}  We will first show the so called consistency condition: Let $Q\subset\R^d$ be a cube and $f_1,f_2\in\Sigma,$ then almost everywhere
	\begin{align}\label{gX}
	(U-T)(1_Qf_1,1_Qf_2) = 1_Q(U-T)(f_1,f_2).
	\end{align}
	We reduce this to two parts, clearly \eqref{gX} follows if we show that 
	\begin{align}\label{gXB}
	(U-T)(f_1,1_Qf_2) = 1_Q(U-T)(f_1,f_2),
	\end{align}
	and
	\begin{align}\label{gXA}
	(U-T)(1_Qf_1,f_2) = 1_Q(U-T)(f_1,f_2).
	\end{align}
	 We only show the claim \eqref{gXA}, the claim \eqref{gXB} being similar.
	
 As the operators $U,T$ share the kernel $K,$ i.e. $U_{\varepsilon} = T_{\varepsilon},$  the claim \eqref{gXA} follows if we show that for $H\in \{U,T\}$ and all points $x\in\R^d,$ there exists $\varepsilon>0$ such that 
	\begin{align}\label{g2}
	(H-H_{\varepsilon})(1_Qf_1,f_2)(x) = 1_Q(x)(H-H_{\varepsilon})(f_1,f_2)(x).
	\end{align}
	
	Assume first that $x\in (Q\cup\partial Q)^c$ (the claim is made modulo sets of measure zero and hence we remove the boundary).  Then, choose $\varepsilon = \frac{1}{2}\dist(x,\partial Q)$ so that for all points $y \in Q$ there holds that
	$
	\max(\abs{x-y},\abs{x-z})\geq \abs{x-y}>\varepsilon.
	$ Consequently, as $x\not\in\supp(1_Qf_1),$ it follows by the definition of $K$ being the kernel of $H$ that
	\begin{align*}
	H(1_Qf_1,f_2)(x) &= \int_{\R^d}\int_{\R^d}K(x,y,z)1_Q(y)_1(y)f_2(z)\ud y\ud z  \\ &= \int\int_{	\max(\abs{x-y},\abs{x-z})>\varepsilon}K(x,y,z)f_1(y)f_2(z)\ud y\ud z = H_{\varepsilon}(1_Qf_1,f_2)(x).
	\end{align*}
	and we find both sides of \eqref{g2} to be zero.
	Then, let $x\in Q\setminus \partial Q$ and again fix  $\varepsilon = \frac{1}{2}\dist(x,\partial Q).$
	Then, as above, we see that $
	(H-H_{\varepsilon})(1_{Q^c}f_1,f_2)(x) = 0.$
	Consequently, for $x\in Q\setminus\partial Q$ there holds that 
	\begin{align*}
	(H-H_{\varepsilon})(1_{Q}f_1,f_2)(x) &= (H-H_{\varepsilon})(1_{\R^d}f_1,f_2)(x) - (H-H_{\varepsilon})(1_{Q^c}f_1,f_2)(x)   \\
	&= (H-H_{\varepsilon})(f_1,f_2)(x) - 0 \\
	&= 1_{Q}(x)(H-H_{\varepsilon})(f_1,f_2)(x).
	\end{align*}
	Hence, the identity \eqref{gXA} holds almost everywhere.
%	Now, it remains to show the identity \eqref{gXB}, however, this follows exactly as above and consequently, we have shown \eqref{gX}.
	
	Then, we define the function $m$ by
	\begin{align}\label{welldefined}
	m1_Q(x) = 1_{Q}(x)(U-T)(1_Q,1_Q)(x),\qquad x\in Q
	\end{align}
	and, as the intersection of two cubes is a cube, the property \eqref{gX} shows that this is well-defined. 
	
	Then, let $f_i,$ $i=1,2,$ be simple and let $x\in \R^d.$ Fix a cube $Q$ such that $\supp(f_1)\cup\supp(f_2)\cup\{x\}\subset Q.$ Then, by \eqref{gX} and linearity there holds that 
	\begin{align*}
	(U-T)(1_Qf_1,1_Qf_2)(x) &= f_1(x)(U-T)(1_Q,1_Qf_2)(x) \\
	&= f_1(x)f_2(x)(U-T)(1_Q,1_Q)(x) = f_1(x)f_2(x)m(x).
	\end{align*}
	Consequently, we have shown that $U-T = m$ on $\Sigma\times\Sigma,$ and this also gives $m\in L^1_{\loc}$ by testing against simple functions. 
	
	If $U,T$ are bounded operators (say $L^4\times L^4\to L^2$) the identity  $(U-T)(f_1,f_2) = mf_1f_2$ follows by approximating $L^4$ functions with those in the class $\Sigma$ (for which the identity holds) and as $L^{\infty}_c\subset L^4$ the desired identity follows.  Also, by testing against simple functions, it follows by the boundedness of $U,T$ that necessarily $\Norm{m}{\infty} \leq \Norm{U}{L^4\times L^4\to L^2}+\Norm{T}{L^4\times L^4\to L^2}.$

\end{proof}

\begin{prop}\label{prop:T:pv} Let $K$ be a kernel such that 
$\sup_{\vare>0}\Norm{T_{\vare}}{L^p\times L^q\to L^{\sigma(p,q)}} < \infty$ for some exponents satisfying $1<p,q<\infty$ and $1\leq \sigma(p,q).$
Then, there exists a bounded bilinear operator $T_0:L^p\times L^q\to L^{\sigma(p,q)}$ with the kernel $K$ and a sequence $\varepsilon_k\to 0$ such that 
	\begin{align}\label{weak}
			\lim_{\varepsilon_k\to 0}\bave{T_{\varepsilon_k}(f_1,f_2),f_3} = \bave{T_0(f_1,f_2),f_3},
	\end{align}  for all $f_1,f_2,f_3\in L^{\infty}_c.$ In addition, if $T$ is a CZO with the kernel $K$, then (by Proposition \ref{prop:difference}) there exists a bounded function $m$ such that $T_0 = T + m.$
\end{prop}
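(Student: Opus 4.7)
The plan is to construct $T_0$ as a weak limit of the uniformly bounded truncations $\{T_\vare\}_{\vare>0}$ via a Banach--Alaoglu / diagonal-extraction argument, and then to identify its kernel.

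First I would introduce the trilinear forms
\[
\Lambda_\vare(f_1,f_2,f_3):=\bave{T_\vare(f_1,f_2),f_3},\qquad (f_1,f_2,f_3)\in L^p\times L^q\times L^{\sigma(p,q)'},
\]
and observe by H\"older together with the standing hypothesis that
\[
\abs{\Lambda_\vare(f_1,f_2,f_3)}\leq M\Norm{f_1}{L^p}\Norm{f_2}{L^q}\Norm{f_3}{L^{\sigma(p,q)'}},\qquad M:=\sup_{\vare>0}\Norm{T_\vare}{L^p\times L^q\to L^{\sigma(p,q)}}<\infty.
\]
I would then choose a countable subset $\mathcal{D}\subset \Sigma^3$ dense in $L^p\times L^q\times L^{\sigma(p,q)'}$ (say, simple functions with rational coefficients supported on dyadic cubes; density holds whenever $\sigma(p,q)'<\infty$, i.e.\ $\sigma(p,q)>1$, and for $\sigma(p,q)=1$ I would restrict the third factor to $L^\infty_c$, which is sufficient for the stated conclusion). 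For each triple in $\mathcal{D}$ the scalar sequence $\{\Lambda_\vare\}_\vare$ is bounded, so a standard diagonal extraction yields $\vare_k\to 0$ along which $\Lambda_{\vare_k}\to\Lambda_0$ on all of $\mathcal{D}$. The uniform H\"older bound above extends $\Lambda_0$ by density and equicontinuity to a bounded trilinear form on the full product space, and Riesz representation then produces a bounded bilinear $T_0:L^p\times L^q\to L^{\sigma(p,q)}$ with $\Lambda_0=\bave{T_0(\cdot,\cdot),\cdot}$; the claimed weak convergence on $L^\infty_c$-triples is then automatic from the construction.

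Next I would verify that $T_0$ carries the kernel $K$. For $f_1,f_2,f_3\in \Sigma$ with $\bigcap_{i=1}^2\supp(f_i)\cap\supp(f_3)=\emptyset$ the distance from $\supp(f_3)$ to the joint support of $f_1,f_2$ is positive, so for all sufficiently small $\vare$ the truncation $\mathbf{1}_{\max(\abs{x-y},\abs{x-z})>\vare}$ is identically $1$ on the domain of integration, giving
\[
\Lambda_\vare(f_1,f_2,f_3)=\iiint K(x,y,z)f_1(y)f_2(z)f_3(x)\ud y\ud z\ud x,
\]
the right-hand side being absolutely convergent by \eqref{kernel:size} and the support assumption. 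Passing to $\vare=\vare_k\to 0$ yields the same identity for $\Lambda_0$, which is precisely the kernel representation of $T_0$ required by Definition \ref{defn:SIO:variable}. If in addition $T$ is a bounded CZO sharing the kernel $K$, then $T$ and $T_0$ are two bounded bilinear SIOs with common kernel, and Proposition \ref{prop:difference} supplies a bounded function $m$ with $(T_0-T)(f_1,f_2)=m\,f_1 f_2$ on $L^\infty_c$, yielding the claimed $T_0=T+m$.

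The hard part is the endpoint case $\sigma(p,q)=1$, where $L^{\sigma(p,q)'}=L^\infty$ is non-separable and a weak-$\ast$ limit in $(L^\infty)^*$ of an $L^1$-bounded sequence need not be representable by an $L^1$ function. I would circumvent this by performing the diagonal extraction on $L^p\times L^q\times L^\infty_c$ (separable in the relevant norms and sufficient for the stated conclusion on $L^\infty_c$-triples), and then upgrading $\Lambda_0(f_1,f_2,\cdot)$ to a genuine element of $L^1$ for each fixed pair $(f_1,f_2)$ by exploiting the uniform bound $\Norm{T_{\vare_k}(f_1,f_2)}{L^1}\leq M\Norm{f_1}{L^p}\Norm{f_2}{L^q}$ together with an equi-integrability / Dunford--Pettis-type argument on bounded sets, extending to the full $L^p\times L^q$ by density at the end.
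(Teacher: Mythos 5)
Your proposal follows essentially the same route as the paper: a diagonal extraction over a countable dense family using the uniform bound $\sup_{\vare>0}\Norm{T_{\vare}}{L^p\times L^q\to L^{\sigma(p,q)}}<\infty$, extension of the limiting trilinear form by density, Riesz representation to produce $T_0$, identification of the kernel by noting that the truncation is eventually trivial on separated supports, and Proposition \ref{prop:difference} for the final claim. One caveat: your patch for the endpoint $\sigma(p,q)=1$ is not justified as stated, since $L^1$-boundedness of $T_{\vare_k}(f_1,f_2)$ does not give equi-integrability, so Dunford--Pettis is not available and the weak-$*$ limit could a priori acquire a singular part; note, however, that the paper's own proof also only runs the argument for $\sigma(p,q)>1$ (it works with $p=q=3$, $\sigma(p,q)=\tfrac{3}{2}$, relying on $L^{\sigma(p,q)}$ being a dual space), so on this point you are no worse off than the original.
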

\begin{proof} We will show the argument with the exponents $p=q=3$ and $\sigma(p,q) = \frac{3}{2}$.  Let $\calF$ be a countable dense subset of $L^3.$ By the bound $\sup_{\varepsilon >0}\Norm{T_{\varepsilon}}{L^3\times L^3\to L^{\frac{3}{2}}}<\infty,$ H{\"o}lder's inequality and a diagonalization argument, we find a sequence $\varepsilon_k\to 0$ such that for all  $f_1,f_2\in \calF,$
	\begin{align}\label{aid}
		\Lambda_{f_1,f_2}(f_3) = \lim_{\varepsilon_k\to 0}\bave{T_{\varepsilon_k}(f_1,f_2),f_3}
	\end{align}
	defines a bounded linear functional on $ L^3\cap \calF$ with norm 
	$$
	\Norm{\Lambda_{f_1,f_2}}{L^3\cap \calF\to\C} \leq \sup_{\varepsilon >0}\Norm{T_{\varepsilon}}{L^3\times L^3\to L^{\frac{3}{2}}}\Norm{f_1}{L^3}\Norm{f_2}{L^3}.
	$$  By Cauchy sequences this extends as a bounded linear functional to the whole of $L^3$ and then the Riesz representation theorem gives a function $\psi(f_1,f_2)\in L^{\frac{3}{2}} = \big(L^3\big)^*$ such that
	$$
	\Lambda_{f_1,f_2}(f_3)=\int\psi(f_1,f_2)f_3,\qquad\Norm{\psi(f_1,f_2)}{L^{\frac{3}{2}}}\leq 	\Norm{\Lambda_{f_1,f_2}}{L^3\to\C}.
	$$

	Then, we define the operator $T_0(f_1,f_2) = \psi(f_1,f_2)$ in the dense class $\calF\times \calF$ and clearly $T_0: L^3\cap \calF\times  L^3\cap \calF \to L^{\frac{3}{2}}$ is a bounded bilinear operator with the kernel $K$ that satisfies \eqref{weak} for  functions $f_1,f_2\in\calF$ and $f_3\in L^3.$ 
	Again, by Cauchy sequences $T_0$ extends as a bounded bilinear functional to the whole $L^3\times L^3$ and then it remains to argue that $T_0$ has the kernel $K$ and that \eqref{weak} holds for $f_1,f_2,f_3\in L^3.$
	That $T_0$ has the kernel $K$ follows by how $T_0$ was extended to $L^3\times L^3$ via Cauchy sequences, the kernel representation being valid in $\calF\times\calF$ and the dominated convergence theorem. Similarly we find that \eqref{weak} holds for $f_1,f_2,f_3\in L^3.$ As $L^{\infty}_{c}\subset L^3,$ we are done. 
\end{proof}

\begin{prop}\label{prop:closedform} Let $T$ be a SIO with a kernel $K$ such that $\sup_{\vare>0}\Norm{T_{\vare}}{L^p\times L^q\to L^{\sigma(p,q)}} < \infty$ for some exponents satisfying $1<p,q<\infty$ and $1\leq \sigma(p,q),$ and let  $f_1,f_2\in L^{\infty}_c$ and $b\in \dot C^{0,\alpha}.$ Then, there holds that 
	\begin{equation}\label{claimpw}
	[b,T]_1(f,g)(x) = \int\int (b(x)-b(y))K(x,y,z)f(y)g(z)\ud y \ud z.
	\end{equation}
\end{prop}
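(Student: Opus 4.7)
The plan is to derive the formula for the truncated operators $T_\varepsilon$, where it is essentially a tautology, and then pass to the limit $\varepsilon\to 0$ on both sides along the sequence supplied by Proposition \ref{prop:T:pv}. For any $\varepsilon>0$ the truncated kernel $K(x,y,z)1_{\{\max(|x-y|,|x-z|)>\varepsilon\}}$ is locally bounded, so directly from the definition of $T_\varepsilon$ we obtain
\begin{equation*}
[b,T_\varepsilon]_1(f,g)(x) = \iint_{\max(|x-y|,|x-z|)>\varepsilon}(b(x)-b(y))K(x,y,z)f(y)g(z)\ud y\ud z
\end{equation*}
pointwise for every $f,g\in L^\infty_c$ and $x\in\R^d$.

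The right-hand side converges pointwise to the full integral in \eqref{claimpw}. The hypothesis $b\in\dot C^{0,\alpha}$ gives $|b(x)-b(y)|\lesssim|x-y|^\alpha$, which together with the size bound \eqref{kernel:size} and the compact supports of $f,g$ dominates the integrand by an integrable function: in polar coordinates around $(y,z)=(x,x)$ in $\R^{2d}$ the singularity contributes $\int_0^1 r^{\alpha-1}\ud r<\infty$ because $\alpha>0$, and the tails are handled by the compact supports. Dominated convergence then delivers the limit as $\varepsilon_k\to 0$ along the sequence introduced below.

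For the left-hand side, Proposition \ref{prop:T:pv} furnishes a bounded extension $T_0:L^p\times L^q\to L^{\sigma(p,q)}$ with kernel $K$ and a sequence $\varepsilon_k\to 0$ along which $T_{\varepsilon_k}\to T_0$ in the weak sense of \eqref{weak}. By Proposition \ref{prop:difference}, $T_0-T$ equals pointwise multiplication by some $m\in L^1_{\loc}$, which cancels inside the bilinear commutator, so $[b,T]_1$ and $[b,T_0]_1$ coincide on $L^\infty_c\times L^\infty_c$ (the natural interpretation of $[b,T]_1(f,g)$ for $f,g\in L^\infty_c$ being through this bounded extension). Testing against an arbitrary $\phi\in L^\infty_c$ and using that $b\phi,bf\in L^\infty_c$ by the H\"older continuity of $b$, the weak convergence \eqref{weak} applied to each of the two pieces of $[b,T_{\varepsilon_k}]_1(f,g)=bT_{\varepsilon_k}(f,g)-T_{\varepsilon_k}(bf,g)$ gives
\begin{equation*}
\bave{[b,T_{\varepsilon_k}]_1(f,g),\phi}\to\bave{[b,T_0]_1(f,g),\phi}=\bave{[b,T]_1(f,g),\phi}.
\end{equation*}
Integrating the pointwise convergence of the right-hand side against $\phi$ via a second dominated convergence (using the integrable dominant above and that $\phi$ has compact support), and varying $\phi\in L^\infty_c$, forces \eqref{claimpw} to hold almost everywhere.

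The main obstacle I expect is the careful interpretation of $[b,T]_1$ on $L^\infty_c$ when $T$ is a priori only defined on $\Sigma^2$, and the coherent passage of the weak limit through the algebraic commutator; the absolute integrability of the principal-value integral near the diagonal, which rests crucially on $\alpha>0$, is a secondary technical point handled by direct computation.
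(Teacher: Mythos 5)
Your proposal is correct and follows essentially the same route as the paper: invoke Proposition \ref{prop:T:pv} to replace $T$ by the weak limit $T_0$ of truncations, note that the multiplication operator $m$ from Proposition \ref{prop:difference} cancels inside $[\,\cdot\,,\cdot\,]_1$, and pass to the limit in the truncated integrals by dominated convergence using $|b(x)-b(y)|\lesssim|x-y|^{\alpha}$ together with the size estimate \eqref{kernel:size}. The only cosmetic difference is that you test each piece of the commutator separately against $\phi\in L^{\infty}_c$, while the paper applies the weak convergence and dominated convergence to the commutator as a whole; the substance is identical.
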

\begin{proof} 
	As $b\in \dot C^{0,\alpha}\subset  L^{\infty}_{\loc},$ $bf_1\in L^{\infty}_c.$ Then by Proposition \ref{prop:T:pv} we have
	\begin{equation}\label{now}
	\begin{split}
		\bave{	[b,T]_1(f_1,f_2),f_3} &= 	\bave{	[b,T_0-m]_1(f_1,f_2),f_3} =\bave{	[b,T_0]_1(f_1,f_2),f_3} \\ 
		&= \bave{\lim_{\varepsilon_k\to 0}[b,T_{\varepsilon_k}](f_1,f_2),f_3} \\
	&\overset{*}{=} \bave{\int\int(b(x)-b(y))K(x,y,z)f_1(y)f_2(z)\ud y \ud z,f_3(x)}_x
	\end{split},
	\end{equation}
	where the last step marked with $*$ follows by the dominated convergence theorem after the following estimate (uniform in $\varepsilon_k$)
	\begin{align*}
	\abs{[b,T_{\varepsilon_k}](f_1,f_2)(x)} = &\abs{ \iint_{\max(\abs{x-y},\abs{x-z})>\varepsilon_k}  (b(x)-b(y))K(x,y,z)f_1(y)f_2(z) \ud y \ud z} \\
	&\leq  \int\int \abs{ (b(x)-b(y))K(x,y,z)f_1(y)f_2(z)} \ud y \ud z \\
	& \lesssim \Norm{b}{\dot C^{0,\alpha}(\R^d)}\int\int \frac{\abs{x-y}^{\alpha}}{\big(\abs{x-y} + \abs{x-z}\big)^{2d}}\abs{f_1(y)}\abs{f_2(z)}\ud y \ud z  < \infty,
	\end{align*}
	where the finiteness follows simply by the fact that $f,g\in L^{\infty}_c$ and that the appearing singularity is weak enough to be locally integrable (see also the last estimate in the proof of Theorem \ref{thm:subdiag/ub}). Now as \eqref{now} holds for all test functions $f_3$ the claim on the line \eqref{claimpw} follows.
\end{proof}

\begin{thm}\label{thm:subdiag/ub}  Let $b\in L^1_{\loc},$ let $\frac{1}{2}<r,p,q<\infty$ be such that  $r^{-1} < \sigma(p,q)^{-1},$ let $\alpha := d\big(\sigma(p,q)^{-1}-r^{-1}\big),$ and let  $T$ be a bilinear SIO such that $\sup_{\vare>0}\Norm{T_{\vare}}{L^{p_0}\times L^{q_0}\to L^{\sigma(p_0,q_0)}} < \infty$ for one tuple of exponents $1<p_0,q_0<\infty$ with $\sigma(p_0,q_0)\geq 1.$  Then,
	$$
	\Norm{[b,T]_1}{L^p\times L^q \to L^r} \lesssim \sup_{Q}\ell(Q)^{-\alpha}\fint_Q\abs{b-\ave{b}_Q}.
	$$
\end{thm}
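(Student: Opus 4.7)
The plan is to invoke the pointwise integral representation of Proposition \ref{prop:closedform} and then reduce the estimation to the classical bilinear fractional integral. First, observe that the right-hand side of the desired inequality is exactly the Campanato seminorm of order $\alpha$, which is equivalent (by Campanato/Meyers) to $\Norm{b}{\dot C^{0,\alpha}}$, so we may assume $b\in \dot C^{0,\alpha}(\R^d)$ and control everything in terms of the $\dot C^{0,\alpha}$-seminorm. Next, we work on the dense class of pairs $(f,g)\in L^{\infty}_c\times L^{\infty}_c$, where Proposition \ref{prop:closedform} applies and gives
\begin{equation*}
[b,T]_1(f,g)(x) = \iint (b(x)-b(y))K(x,y,z)f(y)g(z)\ud y\ud z.
\end{equation*}

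From here I would estimate pointwise using the H\"older condition $\abs{b(x)-b(y)}\leq \Norm{b}{\dot C^{0,\alpha}}\abs{x-y}^{\alpha}$ together with the kernel size bound \eqref{kernel:size}, and use the trivial bound $\abs{x-y}^{\alpha} \leq (\abs{x-y}+\abs{x-z})^{\alpha}$ to get
\begin{equation*}
\abs{[b,T]_1(f,g)(x)} \lesssim \Norm{b}{\dot C^{0,\alpha}}\iint \frac{\abs{f(y)}\,\abs{g(z)}}{(\abs{x-y}+\abs{x-z})^{2d-\alpha}}\ud y\ud z =: \Norm{b}{\dot C^{0,\alpha}}\, I_{\alpha}(\abs{f},\abs{g})(x),
\end{equation*}
where $I_{\alpha}$ is the bilinear fractional integral operator of Kenig--Stein / Grafakos--Kalton.

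The key step is then to invoke the well-known $L^p\times L^q\to L^r$ boundedness of $I_{\alpha}$ in the range $1<p,q<\infty$, $r>0$, with the scaling relation $\frac{1}{r}=\frac{1}{p}+\frac{1}{q}-\frac{\alpha}{d}$, which is exactly our hypothesis on $\alpha$. This yields
\begin{equation*}
\Norm{[b,T]_1(f,g)}{L^r} \lesssim \Norm{b}{\dot C^{0,\alpha}} \Norm{f}{L^p}\Norm{g}{L^q}
\end{equation*}
for $(f,g)\in L^{\infty}_c\times L^{\infty}_c$. Finally, one extends the estimate to all $(f,g)\in L^p\times L^q$ by density, noting that the commutator, and in particular its sub-diagonal definition, is well-defined on simple functions and the uniform bound produced above lets us take the standard limit.

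The main obstacle is therefore not really in the commutator manipulations, which are reduced to a pointwise estimate by Proposition \ref{prop:closedform}, but in citing the bilinear fractional integral bound at the correct range of exponents, in particular covering $r>\frac{1}{2}$; once that is in hand, the argument is essentially mechanical. A secondary subtlety is that the pointwise representation from Proposition \ref{prop:closedform} required a truncation hypothesis on $T$, which is precisely the $\sup_{\vare>0}\Norm{T_\vare}{L^{p_0}\times L^{q_0}\to L^{\sigma(p_0,q_0)}}<\infty$ assumption in the statement, so no further work on $T$ is needed.
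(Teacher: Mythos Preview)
Your proposal is correct and follows essentially the same route as the paper: reduce to $b\in\dot C^{0,\alpha}$, apply Proposition~\ref{prop:closedform} on $L^\infty_c\times L^\infty_c$, dominate pointwise by the Kenig--Stein bilinear fractional integral using the kernel size estimate and the H\"older bound on $b$, and then cite the $L^p\times L^q\to L^r$ mapping property of that operator. The only cosmetic difference is that you pass through the extra inequality $\abs{x-y}^{\alpha}\le(\abs{x-y}+\abs{x-z})^{\alpha}$ to reach the symmetric fractional kernel, whereas the paper keeps $\abs{x-y}^{\alpha}$ in the numerator; both are covered by \cite{KeSt1999}.
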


\begin{proof}
	Clearly we may assume that $\Norm{b}{\dot C^{0,\alpha}}<\infty,$ as otherwise the claimed estimate is immediate. By density it is enough to prove the claim for functions $f_1,f_2\in L^{\infty}_c.$ 
	Then, by Proposition \ref{prop:closedform} we write the commmutator in a closed form and estimate it as
	\begin{align*}
	\abs{[b,T]_1(f_1,f_2)(x)} &\lesssim \Norm{b}{\dot C^{0,\alpha}(\R^d)}\int\int \frac{\abs{x-y}^{\alpha}}{\big(\abs{x-y} + \abs{x-z}\big)^{2d}}\abs{f_1(y)}\abs{f_2(z)}\ud y \ud z \\
	&=  \Norm{b}{\dot C^{0,\alpha}(\R^d)}\mathsf{I}^{\alpha}(\abs{f_1},\abs{f_2})(x).
	\end{align*}
	The operator $\mathsf{I}^{\alpha}$ is the multilinear fractional integral of Kenig and Stein, see \cite{KeSt1999}, where its boundedness is fully characterized: it satisfies exactly the claimed estimates.
\end{proof}

\section{The case $r^{-1}>\sigma(p,q)^{-1}$} 
Now we are on the super-diagonal, meaning that  $r^{-1}>\sigma(p,q)^{-1},qq$ and we define the exponent $s$ by
$r^{-1} = \sigma(s,p,q)^{-1}.$ The following proposition shows that the membership of $b\in \dot L^s$ is sufficient for commutator boundedness.
\begin{prop}\label{prop:superdiag:ub} Let $\frac{1}{2}<r<\infty$ and $1<p,q<\infty$ be such that $r^{-1}>\sigma(p,q)^{-1}$ and define the exponent $s$ by $r^{-1} = \sigma(s,p,q)^{-1}$. 
Also, let $T$ be a bounded bilinear operator
$$
T:L^p\times L^q\to L^{\sigma(p,q)},\qquad T:L^{\sigma(s,p)}\times L^q\to L^r.
$$
Then, there holds that $\Norm{[b,T]_1}{L^p\times L^q\to L^r} \lesssim\Norm{b}{\dot L^s}.$

\end{prop}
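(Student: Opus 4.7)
The plan is to exploit the invariance of the commutator under additive constants together with two applications of H\"older's inequality, one for each of the two terms defining $[b,T]_1$.

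First, I observe that $[b+c, T]_1 = [b,T]_1$ for any constant $c\in\C$, since constants commute with $T$ in the sense that $cT(f,g) - T(cf,g) = 0$. Consequently it suffices to establish the estimate $\Norm{[b,T]_1}{L^p\times L^q\to L^r} \lesssim \Norm{b}{L^s}$, and then optimize over constants on the right. So we may as well assume $b\in L^s$.

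Next, I split the commutator and estimate by the triangle inequality
$$
\Norm{[b,T]_1(f,g)}{L^r} \leq \Norm{bT(f,g)}{L^r} + \Norm{T(bf,g)}{L^r}.
$$
For the first term, note that the exponent identity $r^{-1} = s^{-1} + \sigma(p,q)^{-1}$ is the H\"older relation for the pair $(s,\sigma(p,q))$, so by H\"older and the hypothesis $T:L^p\times L^q\to L^{\sigma(p,q)}$,
$$
\Norm{bT(f,g)}{L^r} \leq \Norm{b}{L^s}\Norm{T(f,g)}{L^{\sigma(p,q)}} \lesssim \Norm{b}{L^s}\Norm{f}{L^p}\Norm{g}{L^q}.
$$
For the second term, H\"older gives $\Norm{bf}{L^{\sigma(s,p)}}\leq \Norm{b}{L^s}\Norm{f}{L^p}$, and then the second boundedness hypothesis $T:L^{\sigma(s,p)}\times L^q\to L^r$ yields
$$
\Norm{T(bf,g)}{L^r} \lesssim \Norm{bf}{L^{\sigma(s,p)}}\Norm{g}{L^q} \leq \Norm{b}{L^s}\Norm{f}{L^p}\Norm{g}{L^q}.
$$
Combining and taking the infimum over constants replaces $\Norm{b}{L^s}$ with $\Norm{b}{\dot L^s}$, which is exactly the claim.

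There is no real obstacle here; the proposition is designed precisely so that the two H\"older exponents line up with the two assumed boundedness properties of $T$. The only minor thing to keep in mind is the mild regularity $b\in L^1_{\loc}$, which is automatic from $b\in L^s$ locally, so that the commutator is well defined on, say, $L^\infty_c\times L^\infty_c$; density then extends the estimate to all of $L^p\times L^q$.
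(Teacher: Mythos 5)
Your proof is correct and is essentially identical to the paper's: both use the invariance $[b,T]_1=[b-c,T]_1$, split into the two terms, apply H\"older with the exponent relations $r^{-1}=s^{-1}+\sigma(p,q)^{-1}$ and $\sigma(s,p)^{-1}=s^{-1}+p^{-1}$ together with the two assumed boundedness properties, and take the infimum over $c$. (For $r<1$ the triangle inequality is only a quasi-triangle inequality, but the constant is absorbed into $\lesssim$, as the paper also implicitly does.)
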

\begin{proof} We first estimate
	\[
		\Norm{[b,T]_1(f,g)}{L^r} = \Norm{[b-c,T]_1(f,g)}{L^r} \leq  	\Norm{(b-c)T(f,g)}{L^r} +  	\Norm{T((b-c)f,g)}{L^r}.
	\]
By H{\"o}lder's inequality and  the boundedness of $T$ we find that
\[
	\Norm{(b-c)T(f,g)}{L^r}  \leq \Norm{b-c}{L^s}\Norm{T(f,g)}{L^{\sigma(p,q)}} \leq  \Norm{b-c}{L^s}\Norm{T}{L^p\times L^q\to L^{\sigma(p,q)}}\Norm{f}{L^p}\Norm{g}{L^q}.
\]
By the boundedness of $T$ and H{\"o}lder's inequality we have
\begin{align*}
		\Norm{T((b-c)f,g)}{L^r} &\leq \Norm{T}{L^{\sigma(s,p)}\times L^q\to L^r}\Norm{(b-c)f}{L^{\sigma(s,p)}}\Norm{g}{L^q} \\
	&\leq \Norm{T}{L^{\sigma(s,p)}\times L^q\to L^r}\Norm{b-c}{L^{s}}\Norm{f}{L^{p}}\Norm{g}{L^q}. 
\end{align*}
Taking the infimum over all $c\in\C$ shows the claim.
\end{proof}
If $r\geq1,$ then $\sigma(s,p)>1$ and $T\in\CZO(2,d,\omega)$ is bounded as in the assumptions of Proposition \ref{prop:superdiag:ub}.

\begin{defn}We say that a collection of sets $\mathscr{S}$ is $\gamma$-sparse, if there exists a pairwise disjoint collection of $\gamma$-major subsets $\mathscr{S}_E = \{E_Q\subset Q: Q\in\mathscr{S}\}.$ 
\end{defn} 

Our sparse collections will be built by splitting into dyadic scales.   For a cube and a dyadic grid $Q\in\calD,$ we denote $\calD_Q = \{P\in\calD: P\subset Q\}.$
Let $f$ be a locally integrable function and let $Q\in\calD$ be a cube, then we set 
\[
S(f;Q) = \big\{ P \in \calD_Q : P \mbox{ is a maximal cube s.t. }\ave{\abs{f}}_P > 2\ave{\abs{f}}_{Q} \big\},
\]
and form the principal stopping time family $\mathscr{S}\subset\calD_Q$ by
$$\mathscr{S} = \cup_k\mathscr{S}_k,\qquad \mathscr{S}_{k+1} = \cup_{P\in \mathscr{S}_k}S(f;P),\qquad \mathscr{S}_0 = \{Q\}.$$
For an arbitrary collection $\mathscr{S}\subset\calD$ of dyadic cubes and for each $Q\in\mathscr{S}$ we let $\ch_{\mathscr{S}}(Q)$ consist of the maximal cubes $P\in\mathscr{S}$ such that $P\subsetneq Q.$ For a given cube $Q\in\mathscr{S}$ we denote $E_Q = Q\setminus \cup_{P\in\ch_{\mathscr{S}}Q}P$ and for each $Q\in\calD$ we let  $\Pi Q = \Pi_{\mathscr{S}}Q$ denote the minimal cube $P$ in $\mathscr{S}$ such that $Q\subset P$ (on the condition that it exists). With this notation then, $
\ch_{\mathscr{S}}(P) = \{Q\in\mathscr{S}\colon Q\subsetneq P, \Pi Q = P\}.$
We also denote $\ch_{\mathscr{S}}^0(P) = \ch_{\mathscr{S}}(P)$ and $\ch^{k+1}_{\mathscr{S}}(P) = \cup_{Q\in\ch^{k}_{\mathscr{S}}(P)}\ch_{\mathscr{S}}Q.$
Lastly, given a cube $Q\in\calD,$ we denote
\[
\Delta_{Q}f = \sum_{P\in\ch_{\calD}(Q)}\big(\ave{f}_P-\ave{f}_Q\big)1_P.
\]
The following Lemma is recorded e.g. in \cite{HyCom}.
\begin{lem}\label{lem:sparse:family} Fix a cube $Q\in \calD$ and a function $f\in L^1_{\loc}$ supported on the cube $Q.$ Then, the principal stopping time family $\mathscr{S}\subset\calD_Q$  is $\frac{1}{2}$-sparse.
	
If $f\in L^\infty(Q)$ and $\int_Qf = 0,$ then we split the function $f$ according to the partition
$$
\mathscr{S} = \cup_{k=0}^N\mathscr{S}_k,\qquad \mathscr{S}_k = \ch^k_{\mathscr{S}}(Q),
$$ 
where the number $N$ is finite and depends only on $\Norm{f}{L^{\infty}(Q)},$
as
\begin{align}\label{f:decomp}
f = \sum_{k=0}^N\sum_{P\in\mathscr{S}_k}f_P,\qquad f_P = \sum_{\Pi_{\mathscr{S}}Q = P}\Delta_Qf,
\end{align}
 and the functions $f_P$ satisfy:
\begin{enumerate}
	\item[(1)] $\int f_P = 0,$
	\item[(2)] $\Norm{f_P}{\infty} \lesssim \ave{\abs{f}}_P,$
	\item[(3)]  $ \sum_{k=0}^N\sum_{P\in\mathscr{S}_k}\Norm{f_P}{\infty}^s1_P \lesssim  (Mf)^s,$  $s>0.$
\end{enumerate}
\end{lem}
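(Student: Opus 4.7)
The plan is to run the standard principal stopping-time / martingale argument; the key observation is that once the grid $\calD_Q$ is partitioned into ``towers'' $\{Q'\in\calD_Q : \Pi_\mathscr{S} Q' = P\}_{P\in\mathscr{S}}$, all three properties of $f_P$ drop out of a one-step telescoping computation. Concretely, for $x\in E_P$ every dyadic cube through $x$ contained in $P$ lies in the tower of $P$, while for $x\in R\in\ch_\mathscr{S}(P)$ the towered cubes through $x$ are exactly those strictly between $R$ and $P$; telescoping $\sum_{Q'}\Delta_{Q'}f$ across the tower yields the explicit formula
\[
f_P = (f-\ave{f}_P)1_{E_P} + \sum_{R\in\ch_\mathscr{S}(P)}(\ave{f}_R - \ave{f}_P)1_R.
\]
Summing over $P\in\mathscr{S}$ and using $\int_Q f = 0$ then recovers $f$ a.e.\ via the dyadic martingale convergence theorem.

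For sparsity, fix $P\in\mathscr{S}$: each $R\in\ch_\mathscr{S}(P)$ satisfies the strict inequality $\ave{\abs{f}}_R>2\ave{\abs{f}}_P$ and the $\mathscr{S}$-children of $P$ are pairwise disjoint in $P$, so Chebyshev gives $\sum_R\abs{R}<\frac{1}{2}\abs{P}$; hence $E_P$ is $\frac{1}{2}$-major in $P$, and the $E_P$'s are pairwise disjoint across $\mathscr{S}$ by dyadic nesting. Finiteness of $N$ when $f\in L^{\infty}(Q)$ is an immediate iteration of the stopping condition: any $P\in\mathscr{S}_k$ forces $\ave{\abs{f}}_P>2^k\ave{\abs{f}}_Q$, but the average stays below $\Norm{f}{L^\infty(Q)}$, so $k\leq \log_2(\Norm{f}{L^\infty(Q)}/\ave{\abs{f}}_Q)$.

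Given the explicit formula, property (1) is the one-line cancellation $\int f_P = \int_P f - \ave{f}_P\abs{P} = 0$. Property (2) splits into two cases: on each $R\in\ch_\mathscr{S}(P)$ the maximality of $R$ in the stopping collection gives $\ave{\abs{f}}_{\widehat R}\leq 2\ave{\abs{f}}_P$ for the dyadic parent $\widehat R$, hence $\ave{\abs{f}}_R\leq 2^{d+1}\ave{\abs{f}}_P$; on $E_P$ the absence of further stopping cubes through a point $x$ means every $Q'\in\calD$ with $x\in Q'\subset P$ obeys $\ave{\abs{f}}_{Q'}\leq 2\ave{\abs{f}}_P$, and Lebesgue differentiation transfers this to the pointwise bound $\abs{f(x)}\leq 2\ave{\abs{f}}_P$ a.e. on $E_P$. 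Property (3) is then a geometric sum: at each $x\in Q$ the principal cubes through $x$ form a finite chain $Q=P_0\supset\cdots\supset P_{k_x}$ satisfying $\ave{\abs{f}}_{P_j}<2^{j-k_x}\ave{\abs{f}}_{P_{k_x}}\leq 2^{j-k_x}Mf(x)$ by iterating the stopping doubling, so
\[
\sum_{P\in\mathscr{S}}\Norm{f_P}{\infty}^s 1_P(x)\lesssim \sum_{j=0}^{k_x}\ave{\abs{f}}_{P_j}^s\lesssim_s Mf(x)^s
\]
for every $s>0$.

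The single nontrivial step is the $L^\infty$ bound on $E_P$ in (2): the combinatorial statement ``no stopping cube through $x$'' has to be promoted to a pointwise estimate on $f$ itself, which forces an appeal to Lebesgue differentiation applied to $\abs{f}$. Everything else is routine book-keeping once the tower partition $\calD_Q = \sqcup_{P\in\mathscr{S}}\{Q' : \Pi_\mathscr{S}Q' = P\}$ and the telescoping identity for $f_P$ are written down.
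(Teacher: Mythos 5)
Your proof is correct and complete; note that the paper itself gives no proof of this lemma, recording it only with a citation to \cite{HyCom}. Your argument — partitioning $\calD_Q$ into towers $\{Q':\Pi_{\mathscr{S}}Q'=P\}$, telescoping to the explicit formula $f_P=(f-\ave{f}_P)1_{E_P}+\sum_{R\in\ch_{\mathscr{S}}(P)}(\ave{f}_R-\ave{f}_P)1_R$, Chebyshev for $\frac12$-sparsity, maximality of the stopping children together with Lebesgue differentiation for (2), and the geometric decay of averages along principal chains for (3) — is exactly the standard proof that the cited reference supplies.
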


\begin{lem}\label{lem:sparse:reflected}  Let $\mathscr{S}$ be a sparse collection, let $\gamma>0$ and let $\calD$ be a dyadic grid. To each cube $Q\in\mathscr{S}$ associate another cube $\wt{Q}\in\calD$ such that $\dist(Q,\wt{Q}) \leq \gamma\ell(Q)$ and $\ell(\wt{Q})\sim\ell(Q).$
Then, the collection $\wt{\mathscr{S}} = \{\wt{Q}: Q\in\mathscr{S}\}$ is sparse.
\end{lem}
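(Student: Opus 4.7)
The plan is to reduce to a Carleson-type packing bound for $\wt{\mathscr{S}}$ using the sparseness of $\mathscr{S}$, and then extract disjoint major subsets by a Hall-type argument.

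First I would extract a geometric enlargement constant: from the hypotheses $\dist(Q,\wt{Q}) \leq \gamma \ell(Q)$ and $\ell(\wt{Q}) \sim \ell(Q)$ one obtains a constant $M = M(\gamma,d)$ such that every $Q \in \mathscr{S}$ lies inside the concentric dilate $M\wt{Q}$, and correspondingly $\wt{R} \subset T$ for $T \in \calD$ forces any preimage $Q \in \mathscr{S}$ with $\wt{Q} = \wt{R}$ to sit in $M'T$ for a dimensional enlargement $M' = M'(M,d)$ of $T$.

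Next, for each $\wt{R} \in \wt{\mathscr{S}}$ I fix a single preimage $Q(\wt{R}) \in \mathscr{S}$ with $\widetilde{Q(\wt{R})} = \wt{R}$. Since different $\wt{R}$'s yield distinct $Q(\wt{R})$'s, this defines an injection $\wt{R} \mapsto Q(\wt{R})$ into $\mathscr{S}$. Writing $\gamma_0$ for the sparseness constant of $\mathscr{S}$, the disjoint major subsets $E_Q \subset Q$ immediately give the packing bound $\sum_{Q \in \mathscr{S}, Q \subset A}|Q| \leq \gamma_0^{-1}|A|$ for every measurable $A$, via $\sum |Q| \leq \gamma_0^{-1} \sum |E_Q| = \gamma_0^{-1}|\bigsqcup E_Q| \leq \gamma_0^{-1}|A|$. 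Combining this with $|Q(\wt{R})| \sim |\wt{R}|$ and the enlargement observation, for every $T \in \calD$ I would conclude
\begin{align*}
\sum_{\substack{\wt{R} \in \wt{\mathscr{S}} \\ \wt{R} \subset T}} |\wt{R}| \lesssim \sum_{\substack{\wt{R} \in \wt{\mathscr{S}} \\ \wt{R} \subset T}} |Q(\wt{R})| \leq \sum_{\substack{Q \in \mathscr{S} \\ Q \subset M'T}} |Q| \lesssim |T|,
\end{align*}
which is the Carleson packing condition for $\wt{\mathscr{S}}$ in the grid $\calD$.

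Finally I would appeal to the standard principle that the Carleson packing condition implies sparseness for dyadic collections. Concretely, applying the packing bound above to the maximal elements of any subcollection $\mathcal{R} \subset \wt{\mathscr{S}}$ yields $\sum_{\wt{R} \in \mathcal{R}}|\wt{R}| \lesssim |\bigcup_{\wt{R} \in \mathcal{R}} \wt{R}|$, which is exactly the hypothesis of a measure-theoretic Hall-type theorem guaranteeing pairwise disjoint subsets $E_{\wt{R}} \subset \wt{R}$ of measure comparable to $|\wt{R}|$. The main obstacle is this last step: the Carleson-to-sparse implication, while standard, is not spelled out in the earlier text and crucially relies on the dyadic structure of $\calD$ to produce disjoint maximal descendants; the first two steps are routine bookkeeping once the geometric constant $M$ is identified.
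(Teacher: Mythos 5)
Your proof is correct and follows essentially the same route as the paper: use the geometric comparability of $Q$ and $\wt{Q}$ together with the disjoint major subsets $E_Q$ to verify the Carleson packing condition for $\wt{\mathscr{S}}$, then invoke the standard dyadic equivalence between the Carleson condition and sparseness (the paper cites Lerner--Nazarov for this last step, which you correctly flag as the only non-elementary ingredient). Your extra care with multiplicities via the injection $\wt{R}\mapsto Q(\wt{R})$ is a minor tidying of the paper's argument, not a different approach.
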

\begin{proof} Let $\wt{P},\wt{H}\in\wt{\mathscr{S}}$ be such that $\wt{H}\subsetneq \wt{P}.$ Then, from that $\dist(H,\wt{H}) \leq \gamma\ell(H)$ and $\ell(H)\lesssim\ell(\wt{H}),$ it follows that there exists a  constant $\beta\sim\gamma$ so that $H\subset\beta\wt{P}.$ Consequently, we find that
	\begin{align*}
		\sum_{\substack{\wt{H}\in\wt{\mathscr{S}} \\ \wt{H}\subsetneq \wt{P}}}\abs{\wt{H}} \leq	\sum_{\substack{\wt{H}\in\wt{\mathscr{S}} \\ H\subsetneq \beta\wt{P}}} \abs{\wt{H}}  \lesssim  \sum_{\substack{H\in\mathscr{S} \\ H\subsetneq \beta\wt{P}}} \abs{H}   \lesssim 	\sum_{\substack{H\in\mathscr{S} \\ H\subsetneq \beta\wt{P}}} \abs{E_H}\leq\abs{\beta\wt{P}} \lesssim \abs{\wt{P}},
	\end{align*}
where we used $\ell(\wt{H}) \lesssim \ell(H)$ in the second estimate and the sparseness of $\mathscr{S}$ in the third and the fourth estimates.
We have shown that the collection $\wt{\mathscr{S}}$ is Carleson and as the Carleson condition is equivalent with sparseness for dyadic collections, for this fact see e.g. the book of Lerner and Nazarov \cite{LernerNazarov2015intuitive}, the claim follows.
\end{proof}

\begin{lem}\label{lem:sparse:bound1} Let $p\in(1,\infty)$ and $\mathscr{S}$ be a sparse collection. Then, for any constants $a_Q$ there holds that 
	\begin{align*}
		\Norm{\sum_{Q\in\mathscr{S}} a_Q1_Q}{L^p} \lesssim \Norm{\sum_{Q\in\mathscr{S}}\abs{a_Q}1_{E_Q}}{L^p}.
	\end{align*}
\end{lem}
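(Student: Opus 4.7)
The plan is to prove this via duality combined with the maximal-function bound, which is the standard way to upgrade pointwise-overlapping sparse sums to the pairwise-disjoint model sets $E_Q$.

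First, I would reduce to the nonnegative case. Since $|\sum_{Q\in\mathscr{S}}a_Q1_Q(x)|\leq\sum_{Q\in\mathscr{S}}|a_Q|1_Q(x)$ pointwise, the $L^p$ norm on the left only increases under replacing $a_Q$ with $|a_Q|$, so it suffices to treat $a_Q\geq 0$. Next, since $p\in(1,\infty)$, $L^p$-duality gives
\begin{align*}
\BNorm{\sum_{Q\in\mathscr{S}}a_Q1_Q}{L^p}=\sup\sum_{Q\in\mathscr{S}}a_Q\int_Q g,
\end{align*}
where the supremum runs over nonnegative $g\in L^{p'}$ with $\|g\|_{L^{p'}}\leq 1$.

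The key geometric input is sparseness: for each $Q\in\mathscr{S}$ there is a pairwise disjoint major subset $E_Q\subset Q$ with $|E_Q|\gtrsim|Q|$. For any $x\in E_Q$ we have $Mg(x)\geq \langle g\rangle_Q$, hence
\begin{align*}
\int_Q g=|Q|\langle g\rangle_Q\lesssim|E_Q|\langle g\rangle_Q\leq\int_{E_Q}Mg.
\end{align*}
Summing and using the pairwise disjointness of the $E_Q$ gives
\begin{align*}
\sum_{Q\in\mathscr{S}}a_Q\int_Q g\lesssim\int Mg\cdot\sum_{Q\in\mathscr{S}}a_Q1_{E_Q}.
\end{align*}

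Finally, by H\"older's inequality and the boundedness of the Hardy--Littlewood maximal operator $M:L^{p'}\to L^{p'}$ (which is where the hypothesis $p>1$, equivalently $p'<\infty$, enters),
\begin{align*}
\int Mg\cdot\sum_{Q\in\mathscr{S}}a_Q1_{E_Q}\leq\Norm{Mg}{L^{p'}}\BNorm{\sum_{Q\in\mathscr{S}}a_Q1_{E_Q}}{L^p}\lesssim\Norm{g}{L^{p'}}\BNorm{\sum_{Q\in\mathscr{S}}a_Q1_{E_Q}}{L^p}.
\end{align*}
Taking the supremum over admissible $g$ and undoing the reduction $a_Q\mapsto|a_Q|$ yields the claim. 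No real obstacle arises: the only delicate points are making sure to pass to $|a_Q|$ first (so we can use duality against $g\geq 0$) and the use of $M$ on $L^{p'}$, which is precisely why the range $p\in(1,\infty)$ is sharp for this chain of estimates.
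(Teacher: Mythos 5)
Your proof is correct and follows essentially the same route as the paper: dualize against $g\in L^{p'}$, use sparseness to pass from $|Q|\langle g\rangle_Q$ to $\int_{E_Q}Mg$, then apply H\"older and the $L^{p'}$-boundedness of $M$. The only cosmetic difference is that you reduce to $a_Q\geq 0$ up front, while the paper simply inserts $|a_Q|$ and $\langle|g|\rangle_Q$ directly in the estimate.
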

\begin{proof} The claim follows by duality and the following estimate
	\begin{align*}
		\big\langle \sum_{Q\in\mathscr{S}} a_Q1_Q ,g \big\rangle \leq  \sum_{Q\in\mathscr{S}}\abs{Q}\abs{a_Q}\ave{\abs{g}}_Q &\lesssim  \sum_{Q\in\mathscr{S}}\abs{E_Q}\abs{a_Q}\ave{\abs{g}}_Q \\ 
		&\lesssim \int Mg  \sum_{Q\in\mathscr{S}}\abs{a_Q}1_{E_Q} \lesssim \Norm{\sum_{Q\in\mathscr{S}} \abs{a_Q}1_{E_Q}}{L^p}\Norm{g}{L^{p'}}.
	\end{align*}
\end{proof}

\begin{defn}\label{norm:superdiag} Let $b\in L^1_{\loc},$ $1\leq r,p,q<\infty$ and let $K$ be locally bounded away from the diagonal. Then, we define the super-diagonal off-support norm 
\allowdisplaybreaks \begin{align*}
\mathcal{O}_{p,q,r}^{\Sigma}(b;K)=\sup&\sum_{k=0}^N\Babs{\int\int\int(b(x)-b(y))K(x,y,z)f_{1,k}(y)f_{2,k}(z)f_{0,k}(x)\ud y\ud z \ud x} \\ &\times\mathsf{N}_{p,q,r'}(\vec{f})^{-1}, 
\end{align*}
where
\begin{align*}
\mathsf{N}_{p,q,r'}(\vec{f}) =  \bNorm{\sum_{k=0}^{N} &\Norm{f_{0,k}}{\infty}1_{\supp(f_{0,k})} }{L^p(\R^d)}\bNorm{\sum_{k=0}^{N} \Norm{f_{1,k}}{\infty}1_{\supp(f_{1,k})} }{L^q(\R^d)} \\ \times& \bNorm{\sum_{k=0}^{N} \Norm{f_{2,k}}{\infty}1_{\supp(f_{2,k})} }{L^{r'}(\R^d)},
\end{align*}
and the supremum is taken over all finite collections of triples of cubes of the same diameter such that $
\max_{i,j\in\{0,1,2\}} \dist(Q^i_k,Q^j_k) \sim A\diam(Q^0_k)$
and over all functions such that $\abs{f_{i,k}} \leq 1_{Q^i_k}$ and $\abs{\supp(f_{i,k})}>0.$
\end{defn}

\begin{rem} If $r>1,$ then we can replace the entries $1_{\supp(f_{i,k})},$ in the three terms of $\mathsf{N}_{p,q,r'}(\vec{f}),$ with $1_{Q^i_k},$  for $i=0,1,2.$ 
\end{rem}

\begin{rem}
For bilinear operators $U$ there holds that
$$
\sum_{i=1}^N \langle U(f_i,g_i), h_i \rangle = \E'\E\Big\langle U\big( \sum_{i=1}^N \eps_i\vare'_i f_i,  \sum_{j=1}^N \eps'_j g_j\big),\sum_{l=1}^N \eps_l h_l \Big\rangle,
$$
where $\varepsilon_i,\varepsilon_i'$ are independent random signs, over some probability spaces with expectations denoted respectively as $\E,\E'$, meaning that $\E \varepsilon_i\vare_j= \E' \vare'_i\vare'_j =  1_{\{i=j\}}(i,j).$
Then, H{\"o}lder's inequality shows that for $r\geq1$ we have  $\mathcal{O}_{p,q,r}^{\Sigma}(b;K)  \leq \Norm{[b,T]_1}{L^p\times L^q\to L^r},$ and consequently, that $\mathcal{O}_{p,q,r}^{\Sigma}$ is a reasonable off-support constant for $r\geq 1.$ 
\end{rem}

\begin{prop}\label{prop:superdiagBanach} Suppose that $K$ is a non-degenerate bilinear kernel, that $b\in L^1_{\loc},$ and $1 \leq r,s,p,q<\infty$ are such that $1/r > 1/p+1/q$ and $r^{-1} = \sigma(s,p,q)^{-1}$.
	Then, there holds that 
	$\Norm{b}{\dot L^s} \lesssim \mathcal{O}^{\Sigma}_{p,q,r}(b;K).$

\end{prop}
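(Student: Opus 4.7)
The plan is to combine $L^s$-$L^{s'}$ duality, a principal-stopping-time sparse decomposition of the test function, and the bilinear approximate weak factorization applied cube-by-cube. A standard truncation $b_N=\min(\max(b,-N),N)$ (or its complex analogue) reduces matters to $b\in L^\infty$, and we pass to $N\to\infty$ at the end. Fix a large dyadic cube $Q_0$ and aim for the uniform bound $\|b-\ave{b}_{Q_0}\|_{L^s(Q_0)}\lesssim \mathcal{O}^\Sigma_{p,q,r}(b;K)$. By duality pick a bounded $\phi$ with $\supp\phi\subset Q_0$ and $\|\phi\|_{L^{s'}}\leq 1$ so that
\[
\|b-\ave{b}_{Q_0}\|_{L^s(Q_0)}=\int b\,\tilde\phi,\qquad \tilde\phi=(\phi-\ave{\phi}_{Q_0})1_{Q_0},
\]
with $\tilde\phi\in L^\infty$, $\int\tilde\phi=0$, and $\|\tilde\phi\|_{L^{s'}}\lesssim 1$. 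Apply Lemma~\ref{lem:sparse:family} to $\tilde\phi$ to obtain a $\tfrac12$-sparse family $\mathscr{S}\subset\calD_{Q_0}$ with disjoint major subsets $\{E_P\}$, a decomposition $\tilde\phi=\sum_{P\in\mathscr{S}}\tilde\phi_P$, and, writing $\lambda_P=\|\tilde\phi_P\|_\infty$, the key pointwise bound $\sum_P\lambda_P^{s'}1_P\lesssim (M\tilde\phi)^{s'}$. The super-diagonal relation $1/r=1/s+1/p+1/q$ rearranges to $s'/p+s'/q+s'/r'=1$, which is the exponent book-keeping behind the distribution of powers of $\lambda_P$ below.

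For each $P\in\mathscr{S}$ invoke Proposition~\ref{prop:bootstrap} to obtain the auxiliary cubes $P^0,P^2$, and Lemma~\ref{lem:sparse:reflected} to see that $\{P^0\},\{P^2\}$ are sparse; let $E_{P^0}\subset P^0,\ E_{P^2}\subset P^2$ be the resulting disjoint major subsets. Apply Proposition~\ref{prop:wf} to $f=\tilde\phi_P/\lambda_P$ with $g_{0,P}=1_{E_{P^0}},\ g_{1,P}=1_{E_P},\ g_{2,P}=1_{E_{P^2}}$; integrating against $b$ and moving $T^{1*}$ to its adjoint $T$ gives
\[
\int b\,\tilde\phi_P=-\lambda_P\bigl\langle[b,T]_1(h_{1,P},g_{2,P}),g_{0,P}\bigr\rangle-\lambda_P\bigl\langle[b,T]_1(g_{1,P},g_{2,P}),h_{0,P}\bigr\rangle+\lambda_P\!\int b\,\tilde f_P,
\]
with $|h_{1,P}|\lesssim 1_P$, $|h_{0,P}|\lesssim A^{2d}\omega(A^{-1})1_{P^0}$, $|\tilde f_P|\lesssim\omega(A^{-1})1_P$ and $\int\tilde f_P=0$. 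Summing over $P$ and splitting each factor $\lambda_P=\lambda_P^{s'/p}\cdot\lambda_P^{s'/q}\cdot\lambda_P^{s'/r'}$ across the three arguments, the trilinear homogeneity of $\mathcal{O}^\Sigma_{p,q,r}$ and its defining inequality produce
\[
\sum_P\lambda_P\bigl|\langle[b,T]_1(h_{1,P},g_{2,P}),g_{0,P}\rangle\bigr|\lesssim \mathcal{O}^\Sigma_{p,q,r}(b;K)\cdot F_p\cdot F_q\cdot F_{r'},
\]
where $F_p=\|\sum_P\lambda_P^{s'/p}1_{E_{P^0}}\|_{L^p}$, $F_{r'}=\|\sum_P\lambda_P^{s'/r'}1_{E_{P^2}}\|_{L^{r'}}$, and $F_q=\|\sum_P\lambda_P^{s'/q}\|h_{1,P}\|_\infty1_P\|_{L^q}$. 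The factors $F_p,F_{r'}$ collapse by the disjointness of $\{E_{P^i}\}$ and property~(3) to $\|\tilde\phi\|_{L^{s'}}^{s'/p},\ \|\tilde\phi\|_{L^{s'}}^{s'/r'}$ respectively (the latter even at the endpoint $r'=\infty$, where the relevant power is trivial and $\sum_P1_{E_{P^2}}\leq 1$); $F_q$ reduces to the same form by Lemma~\ref{lem:sparse:bound1} applied to the sparse collection $\mathscr{S}$ (permissible since $q\in(1,\infty)$) followed by property~(3), yielding $\|\tilde\phi\|_{L^{s'}}^{s'/q}$. The product telescopes to $\|\tilde\phi\|_{L^{s'}}\lesssim 1$, and the same analysis handles the second commutator pairing.

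The error is controlled via $\int\tilde f_P=0$, the bound $\int_P|b-\ave{b}_P|\leq 2\int_P|b-\ave{b}_{Q_0}|$, and Hölder:
\[
\sum_P\lambda_P\Bigl|\!\int b\,\tilde f_P\Bigr|\lesssim\omega(A^{-1})\!\int_{Q_0}|b-\ave{b}_{Q_0}|\sum_P\lambda_P1_P\lesssim \omega(A^{-1})\|b-\ave{b}_{Q_0}\|_{L^s(Q_0)}\Bigl\|\sum_P\lambda_P1_P\Bigr\|_{L^{s'}},
\]
and the last norm is $\lesssim\|\tilde\phi\|_{L^{s'}}\lesssim 1$ by Lemma~\ref{lem:sparse:bound1} together with property~(3). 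Putting everything together,
\[
\|b-\ave{b}_{Q_0}\|_{L^s(Q_0)}\lesssim \mathcal{O}^\Sigma_{p,q,r}(b;K)+C\omega(A^{-1})\|b-\ave{b}_{Q_0}\|_{L^s(Q_0)}.
\]
Because $b\in L^\infty$ makes the left-hand side finite, fix $A$ so large that $C\omega(A^{-1})<\tfrac12$ and absorb; taking $Q_0\to\R^d$ and extracting a convergent subsequence of $\{\ave{b}_{Q_0}\}$ delivers $\|b\|_{\dot L^s}\lesssim \mathcal{O}^\Sigma_{p,q,r}(b;K)$, and a standard limit $N\to\infty$ removes the initial truncation.

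The main obstacle is the exponent accounting: one must split $\lambda_P$ into three powers whose exponents exactly match the triple $(p,q,r')$ appearing in $\mathcal{O}^\Sigma$, and the identity $s'/p+s'/q+s'/r'=1$ needed for this split is precisely the super-diagonal relation defining $s$. The secondary subtlety is the error term, whose sparse bookkeeping produces $\|\sum_P\lambda_P1_P\|_{L^{s'}}\lesssim 1$ and thus an absorbable constant $\omega(A^{-1})$; this is what forces the a priori $L^\infty_{\loc}$ reduction so that the absorbed quantity is finite, and motivates the final truncation limit.
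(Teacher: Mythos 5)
Your argument follows the paper's route almost step for step: duality against a bounded, mean-zero, $L^{s'}$-normalized test function on a cube; the principal stopping-time decomposition of Lemma \ref{lem:sparse:family}; the approximate weak factorization of Proposition \ref{prop:wf} applied cube-by-cube with indicators of disjoint major subsets (so that the $L^{r'}$ factor survives the endpoint $r'=\infty$); the exponent split $\frac{s'}{p}+\frac{s'}{q}+\frac{s'}{r'}=1$ distributed by trilinearity; control of the three sparse sums via Lemma \ref{lem:sparse:bound1} and property $(3)$; absorption of the $\omega(A^{-1})$ error; and a limiting/representation argument to pass from local dual bounds to $\Norm{b}{\dot L^s}$. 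All of that is correct and is exactly the paper's proof.

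The one place you genuinely deviate is the finiteness needed for absorption, and there your argument has a gap. You absorb the quantity $\Norm{b-\ave{b}_{Q_0}}{L^s(Q_0)}$, which is not known to be finite for $b\in L^1_{\loc}$, and you repair this by truncating to $b_N=\min(\max(b,-N),N)$ and "passing to the limit $N\to\infty$ at the end." But the limit requires $\sup_N\mathcal{O}^{\Sigma}_{p,q,r}(b_N;K)\lesssim\mathcal{O}^{\Sigma}_{p,q,r}(b;K)$, which you do not prove and which is not clear: the off-support pairing is a signed (indeed complex-valued, since the paper allows $b:\R^d\to\C$) trilinear form in $(f_0,f_1,f_2)$, and the relation $b_N(x)-b_N(y)=\theta(x,y)\,(b(x)-b(y))$ involves a factor $\theta$ depending jointly on $(x,y)$ that cannot be absorbed into the single-variable test functions $f_0(x),f_1(y)$. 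So the truncated off-support constant is not controlled by the original one, and the final limit is unjustified. The paper avoids this entirely by absorbing a different quantity, namely $\sup_f|\int bf|$ taken over $f$ supported in $Q_0$ with $\int f=0$, $\Norm{f}{\infty}\leq M$, $\Norm{f}{L^{s'}}\leq1$: this supremum is finite merely because $b\in L^1_{\loc}$, and the error function $\wt f_\Sigma$ is (after normalization) an admissible competitor with an extra factor $\omega(A^{-1})$, so the absorption closes without touching $b$. Your error estimate should be rerouted accordingly (your Hölder step against $\Norm{b-\ave{b}_{Q_0}}{L^s(Q_0)}$ is precisely what forces the problematic truncation); with that change the proof is complete.
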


\begin{proof} Let  $Q^1$ be an arbitrary dyadic cube and let $\calD$ be a dyadic grid containing the cube $Q^1.$ Fix a constant $M>0$ and let $f$ be a function such that 
	\begin{align}\label{supover}
	1_{Q^1}f = f,\quad \int_{Q^1}f = 0,\quad \Norm{f}{\infty} \leq M,\quad \Norm{f}{L^{s'}}\leq 1.
	\end{align}
	Note that $s>r$ and hence $s,s'>1$ are both in the Banach range of exponents.
	Let $\mathscr{S}\subset\calD_Q$ denote the sparse collection of cubes we obtain through Lemma \ref{lem:sparse:family}.
	Write the function $f$ as on the line \eqref{f:decomp} and by Proposition \ref{prop:wf} factorize each of the terms $f_{P^1},$ $P^1\in\mathscr{S},$ as in \eqref{decompose:f}, to arrive at 
	\[
	f_{P^1} = \big[ h_{P^1}T^{1*}(g_{P^0},g_{P^2}) - g_{P^0} T(h_{P^1},g_{P^2})\big] + \big[ h_{P^0}T(g_{P^1},g_{P^2}) - g_{P^1}T^{1^*}(h_{P^0},g_{P^2})\big]  + \wt{f}_{P^1}, 
	\]
	where we have written $h_i = h_{P^i}$ and $g_i = g_{P^i}.$ Next, we will specify how the cubes $P^0,P^2$ and the functions $g_{P^i}$ are chosen. 
	
	By Proposition \ref{prop:bootstrap} we can assume $P^0,P^2\in\calD.$ Then, by Lemma \ref{lem:sparse:reflected} the collection $\wt{\mathscr{S}^0} = \{P^0: P^1\in\mathscr{S}\}\subset\calD$ is sparse and we will denote the pairwise disjoint major subsets with $E_{P^0}.$  By Proposition \ref{prop:wf} we are free to choose the  functions $g_{P^i}$ under the condition $\ave{\abs{g_i}}_{Q^i}\gtrsim \Norm{g_i}{\infty}\gtrsim 1$ and clearly the following choices suffice,
	\begin{align}\label{choice}
	 g_{P^0} = 1_{E_{P^0}},\qquad 	g_{P^1} = 1_{P^1},\qquad g_{P^2} = 1_{P^2}.
	\end{align}
	Now, the off-support norm only controls finite sums, but the collection $\mathscr{S}$ is potentially infinite. Hence, we empty the collection $\mathscr{S}$ through an increasing chain of finite subcollections $ \mathscr{S}^1\subset\mathscr{S}^2\dots\subset \mathscr{S}.$ 
	Then, we have
	\begin{equation}\label{x}
		\begin{split}
		\abs{\int_Q bf} &\leq 	\lim_{n\to\infty}\sum_{P^1\in\mathscr{S}^n} \abs{\langle [b,T]_1(h_{P^1},g_{P^2}), g_{P^0}\rangle} +\sum_{P^1\in\mathscr{S}^n} \abs{\langle  [b,T]_1(g_{P^1},g_{P^2}), h_{P^0} \rangle } \\ 
		&+ \abs{\int b\wt{f}_{\Sigma}},
		\end{split}
	\end{equation}
	where we denote $\wt{f}_{\Sigma} = \sum_{P^1\in\mathscr{S}}\wt{f}_{P^1}$ and the implicit change of integration and summation is easily checked by the dominated convergence theorem after the subsequent estimates. We first analyse the second sum on the right-hand side of \eqref{x}, the first one being similar. 
	By trilinearity we write
	\begin{align}\label{eq:trilin}
\big\langle  [b,T]_1(g_{P^1},g_{P^2}), h_{P^0} \big\rangle    = \big\langle [b,T]_1( \alpha_{P^1}g_{P^1},\alpha_{P^2}g_{P^2}),  \alpha_{P^0}h_{P^0} \big\rangle
	\end{align}
	for any constants with $\alpha_{P^0}\alpha_{P^1}\alpha_{P^2} = 1$. Hence, by the relation $r^{-1} = \sigma(s,p,q)^{-1},$ we take
	\begin{align}\label{choices2}
	\alpha_{P^0} = \Norm{f_{P^1}}{\infty}^{\frac{s'}{r'}-1},\quad 	\alpha_{P^1} = \Norm{f_{P^1}}{\infty}^{\frac{s'}{p}},\quad  \alpha_{P^2} = \Norm{f_{P^1}}{\infty}^{\frac{s'}{q}},\quad 0 = (\frac{s'}{r'}-1)+\frac{s'}{p} +\frac{s'}{q}.
	\end{align}
Then, with the choices \eqref{choices2}, also using
$\abs{h_{P^0}}\lesssim \Norm{f_{P^1}}{\infty}\abs{g_{P^0}} = \Norm{f_{P^1}}{\infty}1_{E_{P^0}},$ we 
	 find that 
	\begin{align*}
	&\sum_{P^1\in\mathscr{S}^n}\abs{\langle [b,T]_1( \alpha_{P^1}g_{P^1},\alpha_{P^2}g_{P^2}),\alpha_{P^0}h_{P^0} \rangle}   \\ 
	&\qquad\qquad\leq \mathcal{O}^{\Sigma}_{p,q,r}(b;K)\bNorm{\sum_{P^1\in\mathscr{S}^n} \Norm{f_{P^1}}{\infty}^{\frac{s'}{p}}1_{P^1} }{L^p(\R^d)} \bNorm{\sum_{P^1\in\mathscr{S}^n}\Norm{f_{P^1}}{\infty}^{\frac{s'}{q}}1_{P^2} }{L^q(\R^d)} \\
	&\qquad\qquad\qquad\qquad\times \bNorm{\sum_{P^1\in\mathscr{S}^n}\Norm{f_{P^1}}{\infty}^{\frac{s'}{r'}}1_{E_{P^0}} }{L^{r'}(\R^d)} = RHS.
	\end{align*}
Now the proof splits into the cases $r>1$ and $r=1.$ 

We first consider the case $r>1$ in which all the three terms of $RHS$ are estimated similarly. From that $\dist(P^1,P^i) \lesssim \ell(P^i)$ and $\ell(P^1)\lesssim\ell(P^i)$ it follows that there exists an absolute constant $C>0$ such that $CP^i\supset P^1.$ This means that the collections $\{CP^i: P^1\in\mathscr{S}\}$ are sparse with the major subsets $E_{P^1}.$
Hence, by Lemma \ref{lem:sparse:bound1}, for $i\in\{0,1,2\}$ and $v\in(1,\infty)$ and $u\in(0,\infty),$ there holds that
\begin{align*}
\bNorm{\sum_{P^1\in\mathscr{S}^n} \Norm{f_{P^1}}{\infty}^{u}1_{P^i} }{L^{v}(\R^d)} &\leq 	\bNorm{\sum_{P^1\in\mathscr{S}^n} \Norm{f_{P^1}}{\infty}^{u}1_{CP^i} }{L^{v}(\R^d)} \lesssim \bNorm{\sum_{P^1\in\mathscr{S}^n} \Norm{f_{P^1}}{\infty}^{u}1_{E_{P^1}} }{L^{v}(\R^d)} \\
&\leq  \bNorm{\sum_{P^1\in\mathscr{S}^n} \Norm{f_{P^1}}{\infty}^{u}1_{P^1} }{L^{v}(\R^d)} \lesssim \bNorm{ (Mf)^{u} }{L^{v}(\R^d)},
\end{align*}
where in the last estimate we used the point-wise estimate $(3)$ from Lemma \ref{lem:sparse:family}.	 
Now, we find that 
	\begin{align*}
		RHS &\lesssim  \mathcal{O}^{\Sigma}_{p,q,r}(b;K)\bNorm{Mf^{\frac{s'}{p}}}{L^p(\R^d)} \bNorm{Mf^{\frac{s'}{q}}}{L^q(\R^d)}\bNorm{Mf^{\frac{s'}{r'}}}{L^{r'}(\R^d)} \\
		&\lesssim   \mathcal{O}^{\Sigma}_{p,q,r}(b;K)\bNorm{f}{L^{s'}(\R^d)}^{\frac{s'}{p}} \bNorm{f}{L^{s'}(\R^d)}^{\frac{s'}{q}}\bNorm{f}{L^{s'}(\R^d)}^{\frac{s'}{r'}} \\ 
		&\leq\mathcal{O}^{\Sigma}_{p,q,r}(b;K),
	\end{align*}
where we used $s'>1$ in the second estimate for the boundedness of the maximal function.

In the case $r = 1$ the first two terms of $RHS$ estimate the same as in the case $r>1$ and the last term estimates differently
\begin{align*}
	RHS &\lesssim \mathcal{O}^{\Sigma}_{p,q,r}(b;K)\bNorm{\sum_{P^1\in\mathscr{S}^n}\Norm{f_{P^1}}{\infty}^{\frac{s'}{r'}}1_{E_{P^0}} }{L^{r'}(\R^d)} = \mathcal{O}^{\Sigma}_{p,q,r}(b;K)\bNorm{\sum_{P^1\in\mathscr{S}^n}1_{E_{P^0}} }{L^{\infty}(\R^d)} \\
	&\leq\mathcal{O}^{\Sigma}_{p,q,r}(b;K)\Norm{1 }{L^{\infty}(\R^d)} = \mathcal{O}^{\Sigma}_{p,q,r}(b;K),
\end{align*}
the crucial step here was the disjointness of the sets $E_{P^0}.$
	
The just shown estimates also hold for the other term, and as the estimates are uniform in $n$, it follows that
\begin{align*}
\abs{\int bf} &\lesssim 	 \mathcal{O}^{\Sigma}_{p,q,r}(b;K) + \abs{\int b\wt{f}_{\Sigma}}.
\end{align*}
 	By Lemma \ref{lem:sparse:family} and Proposition \ref{prop:wf} we have 
	\begin{align}\label{bound:error}
	\abs{\wt{f}_{\Sigma}} \leq \sum_{P^1\in\mathscr{S}}\Norm{\wt{f}_{P^1}}{\infty}1_{P^1} \lesssim \omega(A^{-1})\sum_{P^1\in\mathscr{S}}\Norm{f_{P^1}}{\infty}1_{P^1} \lesssim\omega(A^{-1})Mf
	\end{align}
	and as also $1_{Q^1}\wt{f}_{\Sigma} = \wt{f}_{\Sigma}$ and $\int_{Q^1} \wt{f}_{\Sigma} = 0,$
the function $\wt{f}_{\Sigma}$ satisfies the conditions on the line \eqref{supover} but now with the additional decay  $\lesssim \omega(A^{-1}).$  Consequently, we conclude
\begin{align}\label{step3}
\sup_{\eqref{supover}}\abs{\int bf} &\lesssim 	 \mathcal{O}^{\Sigma}_{p,q,r}(b;K) + \omega(A^{-1}) \sup_{\eqref{supover}}\abs{\int bf}.
\end{align}
%where the supremum is taken over all functions $f$ satisfying the conditions on the line \eqref{supover}. 
The common term on both sides of the estimate \eqref{step3} is finite (recall that $b\in L^1_{\loc}$ and for each $f$ as in the supremum $\Norm{f}{\infty}<M$), and hence by choosing $A$ sufficiently large, by absorbing the common term to the left-hand side we find that
\begin{align}\label{step4}
\sup_{\eqref{supover}}\abs{\int_Q bf} \lesssim 	 \mathcal{O}^{\Sigma}_{p,q,r}(b;K).
\end{align}	
Then, as $s>1,$ the proof is concluded with exactly the same argument by Riesz' representation theorem as in  \cite{HyCom}. For the convenience of the reader we give the full details. Denote $ L^{\infty}_{c,0} = \{\varphi: \int\varphi = 0, \varphi\in L^{\infty}_c\},$ where $L^{\infty}_c$ denotes bounded and compactly supported functions. As the right-hand side of \eqref{step4} is independent of the cube $Q$ and the constant $M,$ we find that
\begin{align*}
 \Lambda: L^{s'}\cap L^{\infty}_{c,0}\to \C,\qquad \Lambda f = \int bf,\qquad  \Norm{\Lambda}{L^{s'}\cap L^{\infty}_{c,0}\to \C} \lesssim \mathcal{O}^{\Sigma}_{p,q,r}(b;K)
\end{align*}
defines a bounded linear functional in a dense subset of $L^{s'}.$ By density and linearity we find a linear extension $\widehat{\Lambda}:L^s\to\C$ of $\Lambda$ such that $\Norm{\widehat{\Lambda}}{L^{s'}\to \C}\leq \Norm{\Lambda}{L^{s'}\cap L^{\infty}_{c,0}\to \C}.$ By the Riesz representation theorem there exists a function $a$ satisfying $\Norm{a}{L^s}\leq \Norm{\widehat{\Lambda}}{L^{s'}\to\C}$ and $\widehat{\Lambda}f = \int af,$ for all $f\in L^{s'}.$ Especially, as $\widehat{\Lambda}$ extends $\Lambda,$ there holds that 
\begin{align}\label{xx}
	\int bf = \int af,\qquad f\in  L^{s'}\cap L^{\infty}_{c,0}.
\end{align}
Let $\psi_{x}^k = \frac{1_{B(x,k^{-1})}}{\abs{B(x,k^{-1})}}$ be an approximation to identity at the point $x$ and define $\varphi_{x,y}^k = \psi_{x}^k - \psi_{y}^k.$ Then $\varphi_{x,y}^k\in L^{s'}\cap L^{\infty}_{c,0}$ and we find by \eqref{xx} and the Lebesgue differentiation theorem that 
\begin{align*}
	b(x)-b(y) = \lim_{k\to\infty} \int b\varphi_{x,y}^k = \lim_{k\to\infty} \int a\varphi_{x,y}^k = a(x)-a(y).
\end{align*}
It follows that $b = a + c$ for some constant $c,$ and especially that	$\Norm{b}{\dot L^s} \lesssim \mathcal{O}^{\Sigma}_{p,q,r}(b;K).$ We are done.
\end{proof} 

Having propositions \ref{prop:superdiag:ub} and \ref{prop:superdiagBanach} together gives us
\begin{thm}\label{thm:superdiag} Let  $1\leq r,s,p,q<\infty$ be such that $r^{-1} = \sigma(s,p,q)^{-1}$ and let $T$ be a non-degenerate bilinear SIO bounded as
	\[
	T:L^p\times L^q\to L^{\sigma(p,q)},\qquad T:L^{\sigma(s,p)}\times L^q\to L^r.
	\]
	Then, there holds that  
	$$
	\Norm{[b,T]_1}{L^p\times L^q\to L^r} \sim \Norm{b}{\dot L^s}.
	$$
\end{thm}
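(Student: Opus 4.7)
The plan is to assemble the theorem from three pieces already established in the paper; no new technical machinery should be needed. The upper bound $\Norm{[b,T]_1}{L^p\times L^q\to L^r}\lesssim\Norm{b}{\dot L^s}$ is immediate from Proposition \ref{prop:superdiag:ub}, since the two boundedness hypotheses on $T$ assumed in the theorem are exactly those required there.

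For the lower bound I would proceed in two steps. First, I would verify the inequality
\begin{equation*}
\mathcal{O}^{\Sigma}_{p,q,r}(b;K)\lesssim\Norm{[b,T]_1}{L^p\times L^q\to L^r},
\end{equation*}
which is the content of the remark following Definition \ref{norm:superdiag}. Given a finite collection of triples of cubes and functions $f_{i,k}$ as in that definition, the key identity is to encode the sum using independent Rademacher signs $\varepsilon_k,\varepsilon'_k$:
\begin{equation*}
\sum_{k=0}^{N}\bigl\langle [b,T]_1(f_{1,k},f_{2,k}),f_{0,k}\bigr\rangle = \E\,\E'\Bigl\langle [b,T]_1\Bigl(\sum_{i}\varepsilon_i\varepsilon'_i f_{1,i},\,\sum_{j}\varepsilon'_j f_{2,j}\Bigr),\,\sum_{\ell}\varepsilon_\ell f_{0,\ell}\Bigr\rangle.
\end{equation*}
Applying the hypothesised bound on $[b,T]_1$ inside the double expectation, then H\"older's inequality (valid since $r\geq 1$), reduces everything to estimating $\Norm{\sum_k \varepsilon_k f_{i,k}}{L^{u}}$ for $u\in\{p,q,r'\}$. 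Since the cube supports within each level are disjoint (or at least have bounded overlap after the $A$-separation of Proposition \ref{prop:bootstrap}), Khintchine's inequality gives these random sums up to the quantities $\Norm{\sum_k \Norm{f_{i,k}}{\infty}1_{\supp f_{i,k}}}{L^u}$ appearing in $\mathsf{N}_{p,q,r'}$.

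Second, I would invoke Proposition \ref{prop:superdiagBanach}, whose hypotheses — $K$ non-degenerate and $1\leq r,s,p,q<\infty$ with $r^{-1}=\sigma(s,p,q)^{-1}$ — match the assumptions of the theorem exactly. This yields $\Norm{b}{\dot L^s}\lesssim\mathcal{O}^{\Sigma}_{p,q,r}(b;K)$, and chaining it with the previous step completes the lower bound.

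The only genuine obstacle is the first step: making the Rademacher trick rigorous requires care. One must check that the non-trivial supports in the Definition \ref{norm:superdiag} setup really do behave like a disjoint family for the purposes of Khintchine, which is where the sparseness inherited from the construction in the proof of Proposition \ref{prop:superdiagBanach} is essential; however, for the off-support norm $\mathcal{O}^{\Sigma}_{p,q,r}$ itself the claim is for an \emph{arbitrary} finite family of triples, so it is cleaner to absorb constants by grouping the triples into boundedly-many families of pairwise disjoint cubes at each scale. Since we are in the Banach range $r\geq 1$ throughout, there are no quasi-norm issues and the argument goes through cleanly.
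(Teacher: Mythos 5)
Your proposal is correct and coincides with the paper's own proof, which obtains the upper bound from Proposition \ref{prop:superdiag:ub} and the lower bound by chaining the remark after Definition \ref{norm:superdiag} (the Rademacher randomization plus H\"older, giving $\mathcal{O}^{\Sigma}_{p,q,r}(b;K)\lesssim\Norm{[b,T]_1}{L^p\times L^q\to L^r}$ for $r\ge 1$) with Proposition \ref{prop:superdiagBanach}. The only superfluous point is your worry about Khintchine and disjointness of supports: after randomizing and applying H\"older one only needs the pointwise bound $\babs{\sum_k\varepsilon_k f_{i,k}}\le\sum_k\Norm{f_{i,k}}{\infty}1_{\supp(f_{i,k})}$, so the quantities in $\mathsf{N}_{p,q,r'}(\vec{f})$ dominate the random sums deterministically and no square-function or bounded-overlap argument is needed.
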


\section{Extension to multilinear setting} It is straightforward to extend all definitions and results to the multilinear setting. The multilinear extension of Theorem \ref{thm:main} is the following.
\begin{thm}\label{thm:mult} Let $b\in L^1_{\loc}(\R^d;\C),$ let $T$ be a non-degenerate $n$-linear Calderón-Zygmund operator, let $\frac{1}{n}<r<\infty$ and $p_i\in (1,\infty)$ for $i=1,\dots,n.$  Then, there holds that 
	\begin{align*}
	\Norm{[b,T]_i}{\prod_{i=1}^n L^{p_i}\to L^r} \sim	\begin{cases}
	\Norm{b}{\BMO}, & \mbox{if}\quad \frac{1}{r} = \sum_{i=1}^n\frac{1}{p_i} \\
	\Norm{b}{\dot C^{\alpha,0}},\quad \alpha = d\big( \sum_{i=1}^n\frac{1}{p_i}-\frac{1}{r}\big), &\mbox{if}\quad \frac{1}{r} < \sum_{i=1}^n\frac{1}{p_i} \\
	\Norm{b}{\dot{L}^s},\quad \frac{1}{r} = \frac{1}{s}+\sum_{i=1}^n\frac{1}{p_i},  & \mbox{if}\quad \frac{1}{r} > \sum_{i=1}^n\frac{1}{p_i},\quad r\geq 1.
	\end{cases}
	\end{align*}
\end{thm}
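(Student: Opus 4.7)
The plan is to follow the blueprint of the bilinear argument, replacing every reference to ``two input slots'' with ``$n$ input slots'' and keeping track of the combinatorial book-keeping. I will prove the theorem for the commutator $[b,T]_1$; the remaining indices $i=2,\dots,n$ are completely analogous, relying on the symmetric version of the non-degeneracy assumption.

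First I would set up the definitions. The $n$-linear CZ kernel $K:(\R^d)^{n+1}\setminus\Delta\to\C$ satisfies the natural $(n+1)d$-size and Dini regularity estimates in each variable, and I would call it non-degenerate in slot $j\in\{1,\dots,n\}$ if for every point $y_j$ and every $r>0$ there exist points $y_0,y_1,\dots,y_{j-1},y_{j+1},\dots,y_n$ with $\max_{a,b}|y_a-y_b|>r$ and $|K(y_0,y_1,\dots,y_n)|\gtrsim r^{-nd}$. The rough case is identical, requiring a nonzero Lebesgue point $\theta\in\mathbb{S}^{nd-1}$ of the angular profile $\Omega$.

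Next I would extend Proposition \ref{prop:bootstrap}. Fix a cube $Q^1$ and use slot-$1$ non-degeneracy to produce centres $c_{Q^0},c_{Q^2},\dots,c_{Q^n}$ and congruent cubes $Q^0,Q^2,\dots,Q^n$ of side $\ell(Q^1)$ with pairwise separation $\sim A\ell(Q^1)$ and $|K(c_{Q^0},\dots,c_{Q^n})|\sim A^{-nd}|Q^1|^{-n}$. A telescoping chain of $n$ Dini estimates (one per coordinate of the kernel) gives the analogue of \eqref{cube:estimate3} with an error $\omega(A^{-1})A^{-nd}$, and for $A$ large this upgrades to $|K(x_0,\dots,x_n)|\sim A^{-nd}|Q^1|^{-n}$ on the product $\prod_jQ^j$. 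The rough case repeats the direct computation in the proof of case (2), where the angular argument only uses that $\Omega$ has a Lebesgue point. I would then extend Proposition \ref{prop:wf}: given $f$ of mean zero on $Q^1$ and auxiliary functions $g_j$ on $Q^j$ with $\langle|g_j|\rangle_{Q^j}\sim\|g_j\|_\infty\gtrsim 1$, define
\[
h_1 = \frac{f}{T^{1*}(g_0,g_2,\dots,g_n)},\qquad \tilde w = g_0 T(h_1,g_2,\dots,g_n),
\]
which is well-defined for large $A$ by the lower bound on $T^{1*}(g_0,g_2,\dots,g_n)$ on $Q^1$, exactly as in the bilinear case. Subtracting and adding $K(c_{Q^0},\dots,c_{Q^n})\int\prod_{j\neq 1}g_j$ shows that the first bracket plus $\tilde w$ represents $f$ modulo a function of size $\omega(A^{-1})\|f\|_\infty|g_0|$. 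Iterating the same construction once more with the adjoint $T^{1*}$ to unload $\tilde w$ onto $Q^1$ produces the decomposition
\[
f = \bigl[h_1 T^{1*}(g_0,g_2,\dots,g_n) - g_0 T(h_1,g_2,\dots,g_n)\bigr]+\bigl[h_0 T(g_1,g_2,\dots,g_n)-g_1 T^{1*}(h_0,g_2,\dots,g_n)\bigr]+\tilde f
\]
with $|h_1|\lesssim A^{nd}|f|$, $|h_0|\lesssim A^{nd}\omega(A^{-1})\|f\|_\infty|g_0|$, $|\tilde f|\lesssim\omega(A^{-1})\|f\|_\infty|g_1|$, and $\int\tilde f=0$. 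The auxiliary functions $g_2,\dots,g_n$ merely ride along in every estimate; they contribute only through the bound $\|g_j\|_\infty\sim\langle|g_j|\rangle_{Q^j}\sim 1$.

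With these two technical extensions in hand, everything downstream transfers mechanically. The $n$-linear off-support norms $\mathcal{O}_{p_1,\dots,p_n,r}$, $\mathcal{O}^\infty_{p_1,\dots,p_n,r}$ and $\mathcal{O}^\Sigma_{p_1,\dots,p_n,r}$ are defined by replacing $L^p\times L^q$ with $\prod_{i=1}^n L^{p_i}$ and the two auxiliary slots by $n$; the weak-$L^r$ dualization via Lemma \ref{lem:quasidual} is unchanged. The oscillation bound (Proposition \ref{prop:osc1:bound1}) follows verbatim from the multilinear Proposition \ref{prop:wf}, giving the lower bounds in all three regimes. For the upper bounds: on the diagonal the bound $\|[b,T]_1\|\lesssim\|b\|_{\BMO}$ is classical (e.g.\ \cite{Li2020multilinear}); on the sub-diagonal the closed-form representation of $[b,T]_1$ from Proposition \ref{prop:closedform} (whose proof uses only the uniform truncation bound and the $L^\infty_{\loc}$ membership of $b\in\dot C^{0,\alpha}$) reduces the estimate to the $n$-linear fractional integral of Kenig--Stein; and on the super-diagonal Proposition \ref{prop:superdiag:ub} extends by a telescoping swap of $b-c$ through the $n$ slots, each piece absorbed by H\"older and the $L^{\sigma(s,p_i)}\times\prod_{j\neq i}L^{p_j}\to L^r$ boundedness of $T$.

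The main obstacle I foresee is the sparse-domination step in the super-diagonal lower bound (Proposition \ref{prop:superdiagBanach}). One must choose constants $\alpha_{P^0},\alpha_{P^1},\dots,\alpha_{P^n}$ with $\prod_j\alpha_{P^j}=1$ and exponents balanced so that the $n+1$ Lebesgue norms in $\mathsf N$ each produce a power of $Mf$ whose total $L^{s'}$ mass is bounded by $\|f\|_{L^{s'}}$. The correct choice is the evident generalization of \eqref{choices2},
\[
\alpha_{P^0}=\|f_{P^1}\|_\infty^{s'/r'-1},\qquad \alpha_{P^i}=\|f_{P^1}\|_\infty^{s'/p_i}\ (i=1,\dots,n),\qquad \tfrac{s'}{r'}-1+\sum_{i=1}^n\tfrac{s'}{p_i}=0,
\]
and the case $r=1$ is again handled by disjointness of the $E_{P^0}$ while the case $r>1$ uses the sparse-reflection Lemma \ref{lem:sparse:reflected} in each of the slots $2,\dots,n$ (applied to the collections $\{CP^i\}$). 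The Riesz-representation finish and the absorption of the error $\omega(A^{-1})Mf$ proceed unchanged, and combining everything yields Theorem \ref{thm:mult}.
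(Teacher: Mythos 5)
Your proposal is correct and follows exactly the route the paper intends: the paper's own ``proof'' of Theorem \ref{thm:mult} is simply the assertion that all the bilinear definitions and results extend to the $n$-linear setting, and your slot-by-slot extension of Propositions \ref{prop:bootstrap} and \ref{prop:wf}, the off-support norms, and the sparse/Riesz-representation argument (including the correct exponent balance $\tfrac{s'}{r'}-1+\sum_i\tfrac{s'}{p_i}=0$) is precisely that extension carried out in detail. No gaps.
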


In the super-diagonal case we were unable to relax the assumption $r\geq 1$ to $0<r<1$ and it is not clear how this could be done. We pose this as an open question for future research.

\bibliography{references}

\end{document}